\theoremstyle{plain}
\newtheorem{theorem}{Theorem}[section]
\theoremstyle{remark}
\newtheorem{remark}[theorem]{Remark}
\theoremstyle{plain}
\newtheorem{corollary}[theorem]{Corollary}
\newtheorem{lemma}[theorem]{Lemma}
\newtheorem{proposition}[theorem]{Proposition}
\newtheorem{definition}[theorem]{Definition}
\newtheorem{assumption}[theorem]{Assumption}
\numberwithin{equation}{section}
\def\N{{\mathbb N}}
\def\R{{\mathbb R}}
\def\C{{\mathbb C}}
\newcommand{\Di}{\mathcal{D}}
\newcommand{\E}{{\mathbb E}}
\renewcommand{\P}{{\mathbb P}}
\newcommand{\F}{{\mathscr F}}
\newcommand{\A}{{\mathscr A}}
\newcommand{\G}{{\mathscr G}}
\renewcommand{\H}{{\mathscr H}}
\newcommand{\g}{\gamma}
\newcommand{\om}{\omega}
\renewcommand{\O}{\Omega}
\newcommand{\Ito}{{\hbox{\rm It\^o}}}
\newcommand{\angH}{\omega_{H^{\infty}}}
\newcommand{\tr}{\mathrm{tr}}
\renewcommand{\Re}{\hbox{\rm Re}\,}
\newcommand{\calL}{{\mathscr L}}
\newcommand{\Sc}{{\mathcal S}}
\newcommand{\one}{{{\bf 1}}}
\newcommand{\lb}{\langle}
\newcommand{\rb}{\rangle}
\newcommand{\wh}{\widehat}
\newcommand{\supp}{\text{\rm supp\,}}
\newcommand{\RegJ}{\text{\normalfont{SMR}}(p,T)}%{\mathcal{MS}_p(J;X)}
\newcommand{\RegJw}{\text{\normalfont{SMR}}^0(p,T)}%{\mathcal{MS}_p(J;X)}
\newcommand{\RegRw}{\text{\normalfont{SMR}}^0(p,\infty)}
\newcommand{\RegR}{\text{\normalfont{SMR}}(p,\infty)}%\mathcal{MS}_p(\R_+;X)
\newcommand{\RegRtwoo}{\text{\normalfont{SMR}}(2,\infty)}
\newcommand{\RegJtwointerval}{\text{\normalfont{SMR}}(p,T_2)}
\newcommand{\RegJone}{\text{\normalfont{SMR}}(p,T_1)}
\newcommand{\RegJlambda}{\text{\normalfont{SMR}}(p, T/\lambda)}
\newcommand{\RegJalpha}{\text{\normalfont{SMR}}(p,T,\alpha)}
\newcommand{\RegRalpha}{\text{\normalfont{SMR}}(p,\infty,\alpha)}
\newcommand{\RegthetaR}{\text{\normalfont{SMR}}_{\theta}(p,\infty)}
\newcommand{\RegpsiR}{\text{\normalfont{SMR}}_{\psi}(p,\infty)}
\newcommand{\MaxDetR}{\text{\normalfont{DMR}}(p,\infty)}
\newcommand{\MaxDetJ}{\text{\normalfont{DMR}}(p,T)}
\newcommand{\ud}{d}
\newcommand{\wt}{\widetilde}
\renewcommand{\tilde}{\widetilde}
\newcommand{\BIP}{\text{\normalfont{BIP}}}
\newcommand{\sign}{{\rm sign}}
\begin{document}

\author{Antonio Agresti}
\address{Department of Mathematics Guido Castelnuovo\\ Sapienza University of Rome\\ P.le
A. Moro 2\\ 00100 Roma\\ Italy.} \email{agresti@mat.uniroma1.it}

\author{Mark Veraar}
\address{Delft Institute of Applied Mathematics\\
Delft University of Technology \\ P.O. Box 5031\\ 2600 GA Delft\\The
Netherlands.} \email{M.C.Veraar@tudelft.nl}

\thanks{The second author is supported by the VIDI subsidy 639.032.427 of the Netherlands Organisation for Scientific Research (NWO)}

\date\today

\title{Stability properties of stochastic maximal $L^p$-regularity}

\keywords{stochastic maximal regularity, analytic semigroup, Sobolev spaces, temporal weights}

\subjclass[2010]{Primary: 60H15, Secondary: 35B65, 42B37, 47D06}%

\begin{abstract}
In this paper we consider $L^p$-regularity estimates for solutions to stochastic evolution equations, which is called stochastic maximal $L^p$-regularity. Our aim is to find a theory which is analogously to Dore's theory for deterministic evolution equations. He has shown that maximal $L^p$-regularity is independent of the length of the time interval, implies analyticity and exponential stability of the semigroup, is stable under perturbation and many more properties. We show that the stochastic versions of these results hold.
\end{abstract}

\maketitle
\setcounter{tocdepth}{1}
\tableofcontents

\section{Introduction}

In this paper we study sharp $L^p$-regularity estimates for solutions to stochastic evolution equations. This we will call stochastic maximal $L^p$-regularity. From a PDE point of view it leads to natural a priori estimates, and this can in turn be used to obtain local existence and uniqueness for nonlinear PDEs (see e.g.\ \cite{PrussWeight1,pruss2016moving,CriticalQuasilinear}). In the deterministic setting \cite{Dore} Dore has found several stability properties of maximal $L^p$-regularity (see also the monograph \cite{pruss2016moving}). A list of results can be found below Definition \ref{def:MRLp}. These properties are interesting to know from a theoretical point of view. In practice one usually checks the conditions of Weis' theorem which states that maximal $L^p$-regularity is equivalent to $R$-sectoriality if the underlying space is a UMD space. If $p=1$, $p=\infty$ or $X$ is not UMD, then one can not rely on the latter results, and thus Dore's theory becomes more relevant. Alternative ways to derive maximal $L^p$-regularity can be to use the Da Prato-Grisvard theorem (see \cite[Theorem 9.3.5]{Haase:2}) or put more restrictive conditions on the generator $A$ (see \cite{Kalton-Kucherenko}).

In \cite{MaximalLpregularity,NVW11,NVW13} stochastic maximal $L^p$-regularity for an operator $A$ (or briefly $A	\in\RegJ$) was proved under the condition that $A$ has a bounded $H^\infty$-calculus (see Theorem \ref{thm:SMRmain} below). These results have been applied in several other papers (e.g.\ \cite{A18, Hornung, NVW11eq}). Recently, extensions to the time and $\Omega$-dependent setting have been obtained in \cite{VP18}. The stochastic maximal regularity theory of the above mentioned papers provides an alternative approach and extension of a part of Krylov's $L^p$-theory for stochastic PDEs (see \cite{Kry} and the overview \cite{KryOverview}).

The aim of the first part of the current paper is to obtain stochastic versions of Dore's results \cite{Dore}. In many cases completely new proofs are required due to the fact that stochastic convolutions behave in very different way. Assume $-A$ generates a strongly continuous semigroup $(S(t))_{t\geq 0}$ on a Banach space $X$ with UMD and type $2$. In Sections \ref{s:SMR}--\ref{s:weight}, for all $p\in [2, \infty]$ and $T\in (0,\infty]$, we obtain the following stability properties of stochastic maximal $L^p$-regularity:
\begin{itemize}
\item the class $\RegJ$ is stable under appropriate translations and dilations;
\item independence of the dimension of the noise;
\item if $A\in \RegJ$, then $S$ is an analytic semigroup;
\item if $A\in \RegR$, then $S$ is exponentially stable;
\item $\RegR\subseteq \RegJ = \text{\normalfont{SMR}}(p,\tilde{T})$, for any $\tilde{T}\in (0,\infty)$.
\item if $A\in\RegJ $ and $S$ is exponentially stable, then $A\in\RegR$;
\item perturbation results;
\item weighted characterizations.
\end{itemize}
A $p$-independence result similar to Dore's result holds as well, but it is out of the scope of this paper to prove this. Note that in \cite{Dore} the $p$-independence in the deterministic case was derived from operator-valued Calder\'on--Zygmund theory. A stochastic Calder\'on--Zygmund theory has been recently obtained in \cite{LoVer} where among other things the $p$-independence of $\RegJ$ is established.

The aim of the second part of the paper is to introduce a weighted version of stochastic maximal regularity (see Section \ref{s:weight}). In a future paper we will use the theory of the current paper to study quasilinear stochastic evolution equations. In particular we plan to obtain a version of \cite{HornungDissertation,Hornung} with weights in time. Because of the weights in time one can treat rough initial data. This has already been demonstrated by Portal and the second author in \cite{VP18} in the semilinear case.

\subsubsection*{Notation}
We write $A \lesssim_P B$, whenever there is a constant $C$ only depending on the parameter $P$ such that $A\leq C B$. Moreover, we write $A \eqsim_P B$ if $A \lesssim_P B$ and $A \gtrsim_P B$.

\subsubsection*{Acknowledgment} The authors would like to thank Emiel Lorist for helpful comments. The authors would also like to thank Bounit Hamid for pointing out the reference \cite{BounitDrEl} for Lemma \ref{lem:analyticLp}.

\section{Preliminaries}
In this section we collect some useful facts and fix the notation, which will be employed through the paper.

\subsection{Sectorial Operators and $H^{\infty}$-calculus}
For details on the $H^\infty$-calculus we refer the reader to \cite{Haase:2,Analysis2,KuWe}. For $\varphi\in (0,\pi)$ we denote by $\Sigma_{\varphi}:=\{z\in \mathbb{C}\,:\,|\arg (z)|<\varphi\}$ the open sector of angle $\varphi$. Moreover, for a closed linear operator $A$ on a Banach space $X$, $D(A)$ and $R(A)$ denote its domain and range respectively. We say that $A$ is sectorial if $A$ is injective, $\overline{R(A)}=\overline{D(A)}=X$ and there exists $\varphi\in (0,\pi)$ such that $\sigma(A)\subseteq \Sigma_{\varphi}$ and
\begin{equation*}
\sup_{z\in \C \setminus \overline{\Sigma_{\varphi}}}\|zR(z,A)\|_{\calL(X)}<\infty.
\end{equation*}
Moreover, we $\om(A)$ denotes the infimum of all $\varphi\in (0,\pi)$ such that $A$ is sectorial of angle $\varphi$.

For $\varphi\in(0,\pi)$, we denote by $H^{\infty}_0(\Sigma_{\varphi})$ the set of all holomorphic function $f:\Sigma_{\varphi}\rightarrow \C$ such that $|f(z)|\leq C|z|^{\varepsilon}/(1+|z|^{2\varepsilon})$ for some $C,\varepsilon>0$ independent of $z\in \Sigma_{\varphi}$. Let $A$ be a sectorial operator of angle $\om(A)<\nu<\varphi$. Then for $f\in H^{\infty}_0(\Sigma_{\varphi})$ we set
\begin{equation}
\label{eq:dunford}
f(A):=\frac{1}{2\pi i} \int_{\partial\Sigma_{\nu}} f(z)R(z,A)\,dz;
\end{equation}
where the orientation of $\partial \Sigma_{\nu}$ is such that $\sigma(A)$ is on the right. By \cite[Section 10.2]{Analysis2}, $f(A)$ is well-defined in $\calL(X)$ and it is independent of $\nu\in (\om(A),\varphi)$.

Furthermore, the operator $A$ is said to have a bounded $H^{\infty}(\Sigma_{\varphi})$-calculus if there exists $C>0$ such that for all $f\in H^{\infty}_0(\Sigma_{\varphi})$,
\begin{equation*}
\|f(A)\|_{\calL(X)}\leq C\|f\|_{H^{\infty}(\Sigma_{\varphi})}\,,
\end{equation*}
where $\|f\|_{H^{\infty}(\Sigma_{\varphi})}=\sup_{z\in\Sigma_{\varphi}}|f(z)|$. Lastly, $\angH(A)$ denotes the infimum of all $\varphi\in (\om(A),\pi)$ such that $A$ has a bounded $H^{\infty}(\Sigma_{\varphi})$-calculus.

\begin{remark}
Nowadays it is known that a large class of elliptic operators have a bounded $H^{\infty}$-calculus. For instances see \cite{DDHPV}, \cite[Example 3.2]{NVW11eq}, \cite[Subsection 1.3]{VP18}, \cite[Section 10.8]{Analysis2} and in the reference therein.
\end{remark}

Let $\BIP(X)$ denote the set of sectorial operators which have bounded imaginary powers, i.e. $A^{it}$ extends to a bounded linear operator on $X$ and $\sup_{|t|\leq 1}\|A^{it}\|_{\calL(X)}<\infty$. Moreover, we set
$$
\theta_A :=\limsup_{|t|\to \infty} \frac{1}{|t|}\log \|A^{it}\|_{\calL(X)}.
$$
If $A$ has a bounded $H^{\infty}(\Sigma_{\nu})$-calculus for some $\nu\in(0,\pi)$, then $A\in\BIP(X)$ and $\theta_A\leq \om_{H^{\infty}}(A)$.

Let $(r_n)_{n\geq 1}$ be a Rademacher sequence on $(\Omega,\mathcal{F},\P)$, i.e. a sequence of independent random variables with $\P(r_n = 1) = \P(r_n = -1) = \frac12$ for all $n\geq 1$. A family of bounded linear operators $\mathcal{T}\subseteq \calL(X,Y)$ is said to be {\em $R$-bounded} if there exists a constant $C>0$ such that for all $x_1,\dots,x_N\in X$, $T_1,\dots,T_N\in\mathcal{J}$ one has
$$
\Big\|\sum_{j=1}^N r_j T_j x_j\Big\|_{L^2(\Omega;X)} \leq C\Big\|\sum_{j=1}^N r_j  x_j\Big\|_{L^2(\Omega;X)}.
$$
For more on this notion see \cite[Chapter 8]{Analysis2}.

An operator $A$ is called {\em $R$-sectorial} if for some $\sigma\in (0,\pi)$ one has $\C\setminus \Sigma_{\sigma} \subseteq \rho(A)$ and the set $\{\lambda R(\lambda,A)\,:\,\lambda\in \C\setminus \Sigma_{\sigma}\}$ is $R$-bounded. Finally, $\om_{R}(A)$ denotes the infimum of such $\sigma$'s. For more on this see \cite{Analysis2,pruss2016moving}.

\begin{remark}
\label{r:BIPimplies}
Let $X$ be a UMD Banach space. Then $A\in \BIP(X)$ implies that $A$ is $R$-sectorial on $X$ and $\om_{R}(A)\leq \theta_A$ (see \cite[Theorem 4.4.5]{pruss2016moving}).
\end{remark}
For details on UMD spaces we refer to \cite[Chapter 4]{Analysis1}.

\subsection{Deterministic Maximal $L^p$-regularity and R-boundedness}
\label{ss:DMR}
Deterministic maximal $L^p$-regularity has been investigated by many authors and plays an important role in the modern treatment of parabolic equations, see e.g. \cite{DHP,KuWe,pruss2016moving,CriticalQuasilinear} and the references therein.

If $-A$ generates a strongly continuous semigroup $S:=(S(t))_{t\geq 0}$, then $\omega_0(-A)$ denotes the exponential growth bound of $S$
\[\omega_0(-A):=\inf\{\omega\in \R\,:\,\sup_{t>0}e^{-\om t}\|S(t)\|<\infty\}.\]
Thus $\omega_0(-A)<0$ if and only if $S$ is exponentially stable. Moreover, if $A$ is a densely defined operator and $w>\omega_0(-A)$, then $w+A$ is a sectorial operator on $X$; thus one can define $(w+A)^{1/2}$ as a closed operator on $X$.

\begin{definition}[Deterministic maximal $L^p$-regularity]\label{def:MRLp}
Let $T>0$ and $p\in [1\,\infty]$. A closed linear operator $A$ on a Banach space $X$ is said to have {\em (deterministic) maximal $L^p$-regularity on $(0,T)$} if for all $f\in L^p(0,T;X)$ there exists an unique $u\in W^{1,p}(0,T;X)\cap L^p(0,T;D(A))$ such that
$$
u'+Au=f, \qquad u(0)=0.
$$
In this case we write $A\in \MaxDetJ$.
\end{definition}
Stability properties of the deterministic maximal $L^p$-regularity have been studied in \cite{Dore} (see also the monograph \cite{pruss2016moving}): For all $p\in [1, \infty]$ and $T\in (0,\infty]$
\begin{itemize}
\item the class $\MaxDetJ$ is stable under appropriate translations and dilations;
\item if $A\in \MaxDetJ$, then $-A$ generates an analytic semigroup;
\item if $A\in \MaxDetR$, then $\omega_0(-A)<0$;
\item $\MaxDetR\subseteq \MaxDetJ = \text{\normalfont{DMR}}(p,\tilde{T})$ if $T,\tilde{T}\in (0,\infty)$.
\item if $A\in\MaxDetJ $ and $\omega_0(-A)<0$, then $A\in\MaxDetR$;
\item perturbation results;
\item $\MaxDetJ \subseteq \text{\normalfont{DMR}}(q,T)$ for all $q\in (1, \infty)$ with equality if $p\in (1, \infty)$.
\end{itemize}
Finally let us mention that weighted versions of deterministic maximal $L^p$-regularity have been studied in \cite{PruSim04} for power weights and in \cite{ChillFio,ChillKrol} for weights of $A_p$-type.

The following result was proven in \cite{We}, it has been very influential and is by now a classical result: for a UMD space $X$, $p\in (1, \infty)$ and $0\in \rho(A)$ one has $A\in \MaxDetR$ if and only if $A$ is $R$-sectorial of angle $<\pi/2$.

\subsection{$\gamma$-radonifying operators}
\label{ss:radonifyingop}
In this subsection we briefly review some basic facts regarding $\gamma$-radonifying operators; for further discussions see \cite[Chapter 9]{Analysis2}. Through this subsection $(\g_n)_{n\in \N}$ denotes a Gaussian sequence, i.e. a sequence of independent standard normal variables over a probability space $(\tilde{\Omega},\tilde{\mathcal{A}},\tilde{\P})$.

Let $\H$ be a Hilbert space (with scalar product $[\cdot,\cdot]$) and $X$ be a Banach space with finite cotype. Recall that $H\otimes X$ is the space of finite rank operators from $\H$ to $X$. In other words, each $T\in H \otimes X$ has the form
\begin{equation*}
T=\sum_{n=1}^N h_n \otimes x_n\,,
\end{equation*}
for $N\in \N$ and $(h_n)_{n=1}^N\subset\H$. Here $h\otimes x$ denotes the operator $g\mapsto [g,h] x$.

For $T\in H\otimes X$ define
\begin{equation*}
\|T\|_{\g(\H,X)}^2:=\sup \tilde{\E} \Big\|\sum_{n=1}^N \g_n Th_n\Big\|^2_X<\infty\,;
\end{equation*}
where the supremum is taken over all finite orthonormal systems $(h_n)_{n=1}^N$ in $\H$. Then $\|T\| \leq \|T\|_{\gamma(\H,X)}$. The closure of $H\otimes X$ with respect to the above norm is called {\em the space of $\g$-radonifying operators} and  is denoted by $\gamma(\H,X)$.

The following property will be used through the paper.
\begin{proposition}[Ideal Property]
\label{prop:idealprop}
Let $T\in \g(\H,X)$. If $G$ is another Hilbert space and $Y$ a Banach space, then for all $U\in \calL(X,Y)$ and $V\in \calL(G,\H)$ we have $UTV\in \g(G,Y)$ and
\begin{equation*}
\|UTV\|_{\g(G,Y)}\leq \|U\|_{\calL(X,Y)}\|T\|_{\g(\H,X)}\|V\|_{\calL(G,\H)}.
\end{equation*}
\end{proposition}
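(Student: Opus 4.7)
The plan is to establish the inequality first for finite rank $T\in H\otimes X$ and then extend to general $T\in \gamma(\H,X)$ by density, since $H\otimes X$ is dense in $\gamma(\H,X)$ by definition and $\gamma(G,Y)$ is complete. Concretely, if $T_n\to T$ in $\gamma(\H,X)$ with $T_n\in H\otimes X$, then $UT_nV\in G\otimes Y$ and the finite-rank bound will show that $(UT_nV)_n$ is Cauchy in $\gamma(G,Y)$; its limit will have to coincide with $UTV$ because $\|\cdot\|\leq \|\cdot\|_{\gamma}$ and composition on both sides is continuous in operator norm.

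I would then split the finite-rank estimate into the two one-sided statements
\[
\|UT\|_{\gamma(\H,Y)}\leq \|U\|_{\calL(X,Y)}\|T\|_{\gamma(\H,X)},\qquad \|TV\|_{\gamma(G,X)}\leq \|T\|_{\gamma(\H,X)}\|V\|_{\calL(G,\H)},
\]
and compose them. The first is essentially immediate: for any finite orthonormal system $(h_n)_{n=1}^N$ in $\H$, $\tilde\E\big\|\sum_n \g_n U(Th_n)\big\|_Y^2\leq \|U\|^2 \tilde\E\big\|\sum_n \g_n Th_n\big\|_X^2$, and taking the supremum over such systems gives the bound.

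The main obstacle is the right-composition estimate, because $TV$ is naturally written as $\sum_j (V^* h_j)\otimes x_j$ with a system $(V^* h_j)$ that is only almost orthogonal. By homogeneity I may assume $\|V\|\leq 1$. Fix a finite orthonormal system $(g_k)_{k=1}^M$ in $G$, write $\sum_k \g_k TV g_k=\sum_j \eta_j x_j$ with $\eta_j=\sum_k \g_k[g_k,V^*h_j]$, and observe that $(\eta_j)_j$ is a centered Gaussian vector with covariance matrix $M_{ij}=[V^*h_i,V^*h_j]=[VV^*h_i,h_j]$, which satisfies $0\le M\le I_N$ since $\|V\|\leq 1$. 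The Gaussian domination principle (a standard consequence of the fact that any contraction between finite-dimensional Hilbert spaces can be dilated to an orthogonal transformation, or equivalently of the contraction principle for Gaussian sums) then yields
\[
\tilde{\E}\Big\|\sum_j \eta_j x_j\Big\|_X^2\leq \tilde{\E}\Big\|\sum_j \g_j x_j\Big\|_X^2\leq \|T\|_{\gamma(\H,X)}^2.
\]
Taking the supremum over orthonormal systems in $G$ yields $\|TV\|_{\gamma(G,X)}\leq \|T\|_{\gamma(\H,X)}$, as desired.

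Finally, combining the two one-sided bounds gives the full estimate for finite-rank $T$, and the density argument from the first paragraph transfers it to $T\in \gamma(\H,X)$, completing the proof. The only nontrivial ingredient beyond routine verification is the Gaussian covariance-domination step, which is why I flagged it as the main obstacle; everything else is bookkeeping with orthonormal systems and the definition of $\|\cdot\|_{\gamma}$.
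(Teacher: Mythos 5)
Your proof is correct. The paper does not actually prove this proposition --- it is recalled as a known fact with a pointer to the literature (the standard reference being \cite[Chapter 9]{Analysis2}, where it appears as the ideal property of $\gamma(\H,X)$) --- and your argument is a faithful, self-contained reconstruction of exactly that standard proof: the left ideal bound is immediate from the definition, the right ideal bound is the genuinely nontrivial part and is handled, as in the literature, by Gaussian covariance domination, and the passage from finite rank to general $T$ is the routine density argument you describe. Two small points worth tightening: in the finite-rank step you should say explicitly that $T=\sum_j h_j\otimes x_j$ is written with $(h_j)$ \emph{orthonormal} (always achievable by Gram--Schmidt), since both the identification $\tilde{\E}\|\sum_j \g_j x_j\|^2\leq\|T\|^2_{\g(\H,X)}$ and the bound $([h_i,h_j])_{ij}=I_N$ rely on it; and the covariance of your $\eta_j=\sum_k\g_k[g_k,V^*h_j]$ is really $M_{ij}=[PV^*h_i,PV^*h_j]$ with $P$ the orthogonal projection onto $\mathrm{span}(g_1,\dots,g_M)$, which is still dominated by $I_N$ when $\|V\|\leq 1$, so the conclusion is unaffected.
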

We will be mainly interested in the case that $\H=L^2(S;H)$ where $(S,\mathcal{A},\mu)$ is a measure space and $H$ is another Hilbert space. In this situation we employ the following notation:
$$\g(S;H,X):=\gamma(L^2(S;H),X)$$
and $\g(a,b;H,X):=\g(L^2(a,b;H),X)$, if $S=(a,b)$, $\mu$ is the one dimensional Lebesgue measure and $\mathcal{A}$ is the natural $\sigma$-algebra. If $H=\R$ we simply write $\g(a,b;X):=\g(L^2(a,b),X)$.

An $H$-strongly measurable function $G:S\rightarrow \calL(H,X)$ (i.e. for each $h\in H$ the map $s\mapsto f(s)h$ is strongly measurable) \textit{belongs to} $L^2(S;H)$ \textit{scalarly} if $G^*(s)x^*\in L^2(S;H)$ for each $x^*\in X^*$. Such a function \textit{represent} an operator $R\in \g(S;H,X)$ if for all $f\in L^2(S;H)$ and $x^*\in X^*$ we have
\begin{equation*}
\int_{S}\langle G(s)f(s),x^*\rangle\,ds= \langle R(f),x^*\rangle.
\end{equation*}
It can be shown that if $R$ is represented by $G_1$ and $G_2$ then $G_1=G_2$ almost everywhere. It will be convenient to identify $R$ with $G$ and we will simply write $G\in \g(S;H,X)$ and $\|G\|_{\g(S;H,X)}:=\|R\|_{\g(S;H,X)}$.
By the ideal property, if $S = S_1\cup S_2$ and $S_1$ and $S_2$ are disjoint, then
\begin{align}\label{eq:Geq}
\|G\|_{\g(S;H,X)}\leq \|G\|_{\g(S_1;H,X)}+\|G\|_{\g(S_2;H,X)}.
\end{align}

Another consequence of the ideal property is that for $G\in \g(S;H,X)$, $\phi\in L^\infty(S)$ and $S_0\subseteq S$, we have
\begin{equation}\label{eq:phiGinfty}
\|\phi G\|_{\g(S;H,X)} \leq \|\phi\|_{\infty} \|G\|_{\g(S;H,X)},  \ \ \ \|\one_{S_0} G\|_{\gamma(S;H,X)} = \|G\|_{\gamma(S_0;H,X)}
\end{equation}

To conclude this section, we recall the following embedding:
\begin{proposition}
\label{prop:inclusiongammatype}
Let $X$ be a Banach space with type 2, then
\begin{equation*}
L^2(S;\g(H,X))\hookrightarrow \g(L^2(S),\gamma(H,X)) \hookrightarrow\g(L^2(S;H),X).
\end{equation*}
\end{proposition}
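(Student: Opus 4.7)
The plan is to reduce the statement to two standard facts from the theory of $\gamma$-radonifying operators, both of which are contained in \cite[Chapter 9]{Analysis2}. The rough strategy is: the first inclusion is a Gaussian-randomization statement where the type $2$ hypothesis is used directly, while the second inclusion is a purely Hilbertian ``associativity'' statement for $\gamma$-spaces and in fact holds without any assumption on $X$.

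For the first inclusion $L^2(S;\gamma(H,X))\hookrightarrow \gamma(L^2(S),\gamma(H,X))$, I would use the characterization that a Banach space $Y$ has type $2$ if and only if $L^2(S;Y)$ embeds continuously into $\gamma(L^2(S),Y)$. To apply this with $Y=\gamma(H,X)$ one needs that $\gamma(H,X)$ has type $2$, which is a standard stability property: it follows from the type $2$ of $X$ by a double randomization together with the Kahane--Khintchine inequalities and Fubini. Once $\gamma(H,X)$ is known to have type $2$, the embedding is immediate from the quoted characterization applied on simple functions and extended by density.

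For the second inclusion $\gamma(L^2(S),\gamma(H,X))\hookrightarrow \gamma(L^2(S;H),X)$, the key ingredient is the identification $L^2(S)\otimes H\simeq L^2(S;H)$ as Hilbert spaces (with $\otimes$ the Hilbertian tensor product), combined with the associativity identity
\[
\gamma(H_1,\gamma(H_2,X)) \simeq \gamma(H_1\otimes H_2, X),
\]
valid isometrically for any Hilbert spaces $H_1,H_2$ and any Banach space $X$. Taking $H_1=L^2(S)$ and $H_2=H$ gives the inclusion (which, in this form, is actually an isometric equality). One verifies the identification first on finite rank operators of the form $(f\otimes h)\otimes x$ where $f\in L^2(S)$, $h\in H$, $x\in X$, using that both sides are computed through the same orthonormal basis of $L^2(S)\otimes H=L^2(S;H)$ via products of orthonormal bases of $L^2(S)$ and $H$; then one extends by density.

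The only subtlety I anticipate is of bookkeeping type: one has to check that the isomorphisms above are compatible with the concrete ``representation'' of elements of $\gamma(S;H,X)$ by $\calL(H,X)$-valued functions that are scalarly in $L^2(S;H)$, as introduced in the paragraph preceding the proposition. Once this is verified on finite rank operators, both embeddings extend by density and the norm estimates pass to the limit. No further difficulty is expected, since every non-trivial analytic input is imported from \cite[Chapter 9]{Analysis2}.
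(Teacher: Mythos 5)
Your treatment of the first embedding is correct and is essentially the paper's argument: one first checks that $\gamma(H,X)$ inherits type $2$ from $X$ (the paper does this by viewing $\gamma(H,X)$ as a closed subspace of $L^2(\tilde{\Omega};X)$, which amounts to the same double randomization you describe) and then invokes the characterization of type $2$ through the embedding $L^2(S;Y)\hookrightarrow\gamma(L^2(S),Y)$ applied to $Y=\gamma(H,X)$.

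The second embedding is where your argument has a genuine gap. The ``associativity'' identity $\gamma(H_1,\gamma(H_2,X))\simeq\gamma(H_1\otimes H_2,X)$ is \emph{not} valid for arbitrary Banach spaces --- not isometrically, and not even as a one-sided continuous inclusion in the direction you need. On a finite rank operator $\sum_{i,j}(e_i\otimes f_j)\otimes x_{ij}$ the two norms are
\[
\Big(\tilde{\E}\,\wh{\E}\,\Big\|\sum_{i,j}\tilde{\gamma}_i\wh{\gamma}_j x_{ij}\Big\|^2\Big)^{1/2}
\qquad\text{and}\qquad
\Big(\E\Big\|\sum_{i,j}\gamma_{ij}x_{ij}\Big\|^2\Big)^{1/2},
\]
a decoupled second--order Gaussian chaos versus a sum over an independent Gaussian array. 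These are not computed ``through the same orthonormal basis'' in any useful sense: $(\tilde{\gamma}_i\wh{\gamma}_j)_{i,j}$ is an orthonormal system in $L^2(\tilde{\Omega}\times\wh{\Omega})$, but it is neither Gaussian nor independent. The equivalence of the two expressions is governed by (a Gaussian version of) Pisier's property $(\alpha)$ and fails in general: for $X$ the trace class and $x_{ij}$ the matrix units, the chaos norm is of order $N$ while the Gaussian array norm is of order $N^{3/2}$, so the inclusion $\gamma(L^2(S),\gamma(H,X))\hookrightarrow\gamma(L^2(S;H),X)$ fails there even though that space has finite cotype. The type $2$ hypothesis must therefore be used a second time. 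This is what the paper does: it applies the comparison theorem \cite[Theorem 7.1.20]{Analysis2}, which for spaces with type $2$ dominates the Gaussian sum $\E\|\sum_{i,j}\gamma_{ij}x_{ij}\|^2$ by the corresponding sum over the orthonormal system $\{\tilde{\gamma}_i\wh{\gamma}_j\}$, i.e.\ by the $\gamma(L^2(S),\gamma(H,X))$-norm. (Alternatively, the composite embedding $L^2(S;\gamma(H,X))\hookrightarrow\gamma(L^2(S;H),X)$ can be proved in one step from the formulation of type $2$ for sums of independent mean--zero random variables, but either way type $2$ enters the second inclusion essentially; the representation/bookkeeping issue you flag is not the real subtlety.)
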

\begin{proof}
Since $X$ has type $2$, also $\gamma(H,X)$ has type $2$, because it is isomorphic to a closed subspace of $L^2(\tilde{\Omega};X)$ (see \cite[Proposition 7.1.4]{Analysis2}). Now the first embedding follows from \cite[Theorem 9.2.10]{Analysis2}. The second embedding follows by considering finite rank operators and applying \cite[Theorem 7.1.20]{Analysis2} with orthonormal family $\{\tilde{\gamma}_{i} \wh{\gamma}_{j}:i, j\in \N\}$, where $\tilde{\gamma}_i$ and $\wh{\gamma}_j$ are defined on probability spaces $\tilde{\Omega}$ and $\wh{\Omega}$, respectively.
\end{proof}

\subsection{Stochastic Integration in UMD Banach spaces}
The aim of this section is to present basic results of the stochastic integration theory in UMD Banach spaces developed in \cite{NVW1}.
Let $(\O,\A,\P)$ be a probability space with filtration $\F = (\F_t)_{t\geq 0}$ and throughout the rest of the paper it is fixed.
An $\F$-adapted step process is a linear combination of functions
\begin{equation*}
(\one_{A\times (s,t]}\otimes( h\otimes x))(\omega,t):=\one_{A\times (s,t]}(\omega,t)( h\otimes x)\,,
\end{equation*}
where $0\leq s<t\leq T$ and $A\in \F_s$.
Let $T>0$, we say that a stochastic process $G:[0,T]\times\Omega \rightarrow \calL(H,X)$ \textit{belongs to} $L^2(0,T;H)$ \textit{scalarly almost surely} if for all $x^*\in X^*$ a.s.\ the $G^*x^*\in L^2(0,T;H)$. Such a process $G$ is said to represent an $L^2(0,T;H)$-strongly measurable $R\in L^0(\Omega;\g(0,T;H,X))$ if for all $f\in L^2(0,T;H)$ and $x^*\in X^*$ we have
\begin{equation*}
\langle R(\om)f,x^*\rangle= \int_0^T \langle G(t,\om)f(t),x^* \rangle\,dt.
\end{equation*}
As done in Subsection \ref{ss:radonifyingop}, we identify $G$ and $R$ in the case that $R$ is represented by $G$. Moreover, we say that $G\in L^p(\Omega;\g(0,T;H,X))$ if $R\in L^p(\Omega;\g(0,T;H,X))$ for some $p\in[0,\infty)$.
We say that $R:\Omega \rightarrow\g(0,T;H,X)$ is {\em elementary adapted to $\F$} if it is represented by an $\F$-adapted step process $G$. Lastly,
\begin{equation*}
L_{\F}^p(\Omega;\g(0,T;H,X))
\end{equation*}
denotes the closure of all elementary adapted $R\in L^p(\Omega;\g(0,T;H,X))$.
In the paper we will consider cylindrical Gaussian noise.

\begin{definition}
A bounded linear operator $W_H:L^2(\R_+;H)\rightarrow L^2(\Omega)$ is said to be an {\em $\F$-cylindrical Brownian motion} in $H$ if the following are satisfied:
\begin{itemize}
\item for all $f\in L^2(\R_+;H)$ the random variable $W_H(f)$ is centered Gaussian.
\item for all $t\in \R_+$ and $f\in L^2(\R_+;H)$ with support in $[0,t]$, $W_H(f)$ is $\F_t$-measurable.
\item for all $t\in \R_+$ and $f\in L^2(\R_+;H)$ with support in $[t,\infty]$, $W_H(f)$ is independent of $\F_t$.
\item for all $f_1,f_2\in L^2(\R_+;H)$ we have $\E(W_H(f_1)W_H(f_2))=[f_1,f_2]_{L^2(\R_+;H)}$.
\end{itemize}
\end{definition}
Given an $\F$-cylindrical Brownian motion in $H$, the process $(W_H(t)h)_{t\geq 0}$, where
\begin{equation}
W_H(t)h:=W_H(\one_{(0,t]}\otimes h)\,,
\end{equation}
is an $\F$-Brownian motion.

At this point, we can define the \textit{stochastic integral with respect to an $\F$-cylindrical Brownian} motion in $H$ of  the process $\one_{A\times (s,t]}\otimes( h\otimes x)$:
\begin{equation}
\int_0^{\infty} \one_{A\times (s,t]}\otimes( h\otimes x)(s) \,dW_H(s):= \one_{A}\otimes (W_H(t)h-W_H(s)h)\,x\,,
\end{equation}
and we extend it to $\F$-adapted step processes by linearity.
\begin{theorem}[\Ito$\;$ isomorphism]
\label{th:Ito}
Let $T>0$, $p\in (0,\infty)$ and let $X$ be a UMD Banach space, then the mapping $G\rightarrow \int_0^{T} G\,dW_H$ admits a unique extension to a isomorphism from $L^p_{\F}(\Omega;\g(0,T;H,X))$ into $L^p(\Omega;X)$ and
\begin{equation*}
\E\sup_{0\leq  t\leq T}\Big\|\int_0^t G(s)\,dW_H(s)\Big\|_{X}^p \eqsim_{p,X} \E\|G\|_{\g(0,T;H,X)}^p.
\end{equation*}
\end{theorem}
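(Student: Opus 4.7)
The plan is to establish the two-sided bound on finite-rank adapted step processes, extend by density, and upgrade to the supremum via a maximal inequality. For a step process
$$G = \sum_{n=1}^N \one_{(t_{n-1},t_n]} \otimes \sum_{k=1}^{K_n} h_{n,k} \otimes x_{n,k}$$
with each $\sum_k h_{n,k} \otimes x_{n,k}$ being $\F_{t_{n-1}}$-measurable and bounded, the integral is the explicit finite sum $I(G) = \sum_{n,k} \Delta_n(h_{n,k})\, x_{n,k}$ with $\Delta_n(h) := W_H(t_n)h - W_H(t_{n-1})h$, and the $\gamma$-norm $\|G\|_{\gamma(0,T;H,X)}$ is the norm of the induced operator $R_G\in\gamma(L^2(0,T;H),X)$.

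The crucial analytic input is the UMD decoupling inequality: on an enlarged probability space carrying an independent copy $\tilde W_H$ of $W_H$, one has
$$\E \|I(G)\|_X^p \eqsim_{p,X} \E \tilde\E \Big\| \sum_{n,k} \tilde\Delta_n(h_{n,k})\, x_{n,k} \Big\|_X^p$$
for every $p\in(0,\infty)$. This is UMD applied to the discrete martingale with differences $\sum_k \Delta_n(h_{n,k}) x_{n,k}$ together with its tangent sequence, the $\F_{t_{n-1}}$-measurable integrands being unchanged under decoupling. Conditionally on $\F_\infty$, the decoupled sum is Gaussian in $X$: choosing an orthonormal basis $(f_j)$ of $L^2(0,T;H)$ so that $\tilde\gamma_j := \tilde W_H(f_j)$ are i.i.d.\ standard Gaussians, it coincides with $\sum_j \tilde\gamma_j R_G f_j$. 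By Kahane's inequality and the very definition of the $\gamma$-norm,
$$\tilde\E \Big\| \sum_j \tilde\gamma_j R_G f_j \Big\|_X^p \eqsim_p \|G\|_{\gamma(0,T;H,X)}^p,$$
so integrating over $\omega$ yields the two-sided bound at the fixed time $T$.

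To pass to the supremum, note that $M_t := \int_0^t G\, dW_H$ is a continuous $X$-valued $\F$-martingale; Doob's maximal inequality gives $\E\sup_{t\le T}\|M_t\|_X^p \lesssim_p \E\|M_T\|_X^p$ for $p\in[1,\infty)$, while $p\in(0,1)$ is handled by Lenglart's inequality using that $G\one_{[0,\tau]}$ remains an adapted step process for every stopping time $\tau\le T$. Elementary adapted step processes are dense in $L^p_{\F}(\Omega;\gamma(0,T;H,X))$ by construction, so the estimate extends uniquely by continuity and yields the claimed isomorphism onto its range. The main obstacles I anticipate are: (i) the UMD decoupling itself, which is the deep analytic input and is proved in \cite{NVW1} via discrete approximation of the cylindrical Brownian increments; and (ii) the measurable identification of $\int G\, d\tilde W_H$ with $\sum_j \tilde\gamma_j R_G f_j$ under conditional expectation, requiring a Fubini-type argument on the product space together with the ideal property of $\gamma$-radonifying operators.
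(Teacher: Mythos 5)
This theorem is stated in the paper without proof — it is quoted directly from \cite{NVW1} — and your outline is essentially the proof given there: reduction to adapted step processes, the UMD decoupling inequality for tangent sequences, the conditional Gaussian representation identified with the $\gamma$-norm via Kahane--Khintchine, and a maximal inequality plus density to conclude. One small correction: Doob's $L^p$ maximal inequality requires $p>1$, so the endpoint $p=1$ must also be routed through the Lenglart/stopping-time (or extrapolation from a fixed $p_0>1$) argument you invoke for $p\in(0,1)$; with that adjustment the sketch is sound and matches the cited source.
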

If $G$ does not depend on $\Omega$, then the above holds for every Banach space $X$ and the norm equivalence only depends on $p\in (0,\infty)$.

For future references, we make the following simple observation. To state this, we denote by $L^p_{\F}(\Omega\times (0,T);\g(H,X))$ the closure in $L^p(\Omega\times (0,T);\g(H,X))$ of all simple $\F$-adapted stochastic process.

As a consequence of Proposition \ref{prop:inclusiongammatype} one easily obtains the following:
\begin{corollary}
\label{cor:Ito}
Let $T>0$, $p\in (0,\infty)$ and let $X$ be a UMD Banach space with type 2. Then the mapping $G\mapsto \int_0^T G\,dW_H$ extends to a bounded linear operator from $L^p_{\F}(\Omega\times (0,T);\g(H,X))$ into $L^p(\Omega;X)$. Moreover,
\begin{equation*}
\E\sup_{0\leq  t\leq T}\Big\|\int_0^t G(s)\,dW_H(s)\Big\|_{X}^p \lesssim_{p,X,T} \E\|G\|_{L^2(0,T;\g(H,X))}^p.
\end{equation*}
\end{corollary}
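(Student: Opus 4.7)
The plan is to chain together the two-step embedding from Proposition \ref{prop:inclusiongammatype} with the Itô isomorphism from Theorem \ref{th:Ito}. Since $X$ has type $2$, Proposition \ref{prop:inclusiongammatype} provides a bounded inclusion
\[
J: L^2(0,T;\gamma(H,X)) \hookrightarrow \gamma(0,T;H,X),
\]
and the Itô isomorphism handles the $\gamma(0,T;H,X)$-valued side. So at the level of norms the corollary is essentially immediate once adaptedness is tracked carefully through the embedding.

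First I would take $G$ to be an $\mathscr F$-adapted simple process in $L^p(\Omega\times (0,T);\gamma(H,X))$. Such a $G$ is clearly elementary adapted as an element of $L^p(\Omega;\gamma(0,T;H,X))$ (the same indicator/rank-one pieces represent it), so $G\in L^p_{\mathscr F}(\Omega;\gamma(0,T;H,X))$. Applying Proposition \ref{prop:inclusiongammatype} pointwise in $\omega$ and then taking $L^p(\Omega)$-norms gives
\[
\E \|G\|_{\gamma(0,T;H,X)}^p \lesssim_{X} \E \|G\|_{L^2(0,T;\gamma(H,X))}^p .
\]
Next, Theorem \ref{th:Ito} applied on $L^p_{\mathscr F}(\Omega;\gamma(0,T;H,X))$ yields
\[
\E \sup_{0\leq t\leq T}\Big\|\int_0^t G(s)\,dW_H(s)\Big\|_X^p \eqsim_{p,X} \E\|G\|_{\gamma(0,T;H,X)}^p,
\]
and combining the two bounds gives exactly the claimed estimate on simple $\mathscr F$-adapted processes.

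Finally, since simple $\mathscr F$-adapted processes are dense in $L^p_{\mathscr F}(\Omega\times (0,T);\gamma(H,X))$ by definition, the inequality extends by continuity: the map $G\mapsto \int_0^\cdot G\,dW_H$ extends uniquely to a bounded linear operator into $L^p(\Omega;C([0,T];X))$ (hence into $L^p(\Omega;X)$ after taking the endpoint), with the same estimate. I don't expect any serious obstacle here; the only slightly delicate point is making sure that when we identify an element of $L^p(\Omega\times (0,T);\gamma(H,X))$ with its image in $L^p(\Omega;\gamma(0,T;H,X))$ under $J$, the adaptedness carries over, which is why one argues first for simple adapted processes and then passes to limits.
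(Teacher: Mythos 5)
Your argument is correct and is precisely the route the paper intends: the corollary is stated as an immediate consequence of Proposition \ref{prop:inclusiongammatype} combined with the It\^o isomorphism of Theorem \ref{th:Ito}, applied first to adapted step processes and then extended by density. No issues.
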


\section{Stochastic Maximal $L^p$-regularity}
\label{s:SMR}
Throughout the rest of the paper we assume that the operator $-A$ with domain $D(A)$ is a closed operator and generates a strongly continuous semigroup $(S(t))_{t\geq 0}$ on a Banach space $X$ with UMD and type $2$.

\subsection{Solution concepts}
\label{ss:solutionconcepts}
For processes $F\in L^1_{\F}(\O\times (0,T);X)$ and $G\in L^2_{\F}(\O\times (0,T);\g(H,X))$ for every $T<\infty$, consider the following stochastic evolution equation
\begin{equation}\label{eq:SEEGdW}
\begin{cases}
dU + A U\ud t = F\ud t+ G \ud W_H, & \text{on $\R_+$},\\
U(0)=0.
\end{cases}
\end{equation}
The {\em mild solution} to \eqref{eq:SEEGdW} is given by
\[U(t) = S*F(t) + S\diamond G(t):=\int_0^t S(t-s) F(s)\ud s+\int_0^t S(t-s) G(s)\,dW_H(s).\]
for $t\geq 0$.
It is well-known that the mild solution is a so-called {\em weak solution} to \eqref{eq:SEEGdW}:
for all $x^*\in D(A^*)$, for all $t\geq0$, a.s.
\[\lb U(t),x\rb + \int_0^t \lb U(s),A^*x^*\rb \ud s = \int_0^t \lb F(s),x^*\rb \ud s+\int_0^t G(s)^*x^* \ud W_H(s)\]
Conversely, if $U\in L^1_{\rm loc}(\R_+;X)$ a.s. is a weak solution to \eqref{eq:SEEGdW}, then $U$ is a mild solution. Moreover, if $U\in L^1_{\rm loc}(\R_+;D(A))$, then additionally $U$ is a {\em strong solution} to \eqref{eq:SEEGdW}: for all $t\geq0$ a.s.
\[U(t) + \int_0^t A U(s) \ud s = \int_0^t F(s)\ud s + \int_0^t G(s) \ud W_H(s).\]
For details we refer to \cite{DPZ} and \cite{VThesis}.

\subsection{Main definitions}
\begin{definition}[Stochastic maximal $L^p$-regularity]
\label{def:RegJ}
Let $X$ be a UMD space with type $2$, let $p\in [2, \infty)$, $w>\omega_0(-A)$ and let $J = (0,T)$ with $T\in (0,\infty]$. The operator $A$ is said to have {\em stochastic maximal $L^p$-regularity} on $J$ if for each $G\in L^p_{\F}(\O\times J;\g(H,X))$ the stochastic convolution
$S\diamond G$ takes values in $D((w+A)^{1/2})$ $\P\times dt$-a.e., and satisfies
\begin{equation}\label{eq:SGdef}
\|S\diamond G\|_{L^p(\O\times J;D((w+A)^{1/2}))}\leq C \|G\|_{L^p(\O\times J;\g(H,X))},
\end{equation}
for some $C>0$ independent of $G$. In this case we write $A\in \RegJ$.
\end{definition}
Note that, the class $\RegJ$ does not depend on $w>\omega_0(-A)$. Indeed, for any $w,w'> \omega_0(-A)$, $D((w+A)^{1/2}) = D((w'+A)^{1/2})$ isomorphically.

Some helpful remarks may be in order.

\begin{remark}
\label{r:Stochasticintegralwelldef}
In Definition \ref{def:RegJ} it suffices to consider $G$ in a dense class of a subset of $L^p_{\F}(\O\times\R_+;\g(H,X))$ for which the stochastic convolution process $(w+A)^{1/2}S\diamond G(t)$ is well-defined for each $t\geq 0$. For example, the set of all adapted step processes with values in $D(A)$ (or the space $L^p_{\F}(\Omega\times J;\g(H,D(A)))$) can be used.
Indeed, if $G \in L^p_{\F}(\Omega\times J;\g(H,D(A)))$, then $s\mapsto (w+A)^{1/2} S(t-s) G(t)$ belongs to $ L^p(\Omega\times J;\g(H,X))$ for each $t\in J$. Indeed, for $t\in J$,
\begin{align*}
\E\int_0^t \|(w+A)^{1/2} S(t-s) G(s)\|^p_{\g(H,X)}\, ds &\leq M^2 \E\int_0^t \|(w+A)^{1/2} G(s)\|^p_{\g(H,X)}\, ds
\\ & \leq c\, M^2 \|G\|_{L^p(\Omega\times J;\g(H,D(A)))},
\end{align*}
where $M:=\sup_{s\leq t}\|S(t)\|$. Therefore, for each $t\in J$, the well-definedness of $(w+A)^{1/2} S\diamond G(t)$ follows from Corollary \ref{cor:Ito}.
\end{remark}

\begin{remark}
\label{r:RegJdelta}
In the setting of Definition \ref{def:RegJ}, for $\alpha\in[1/2,1]$, one could ask for
\begin{equation}\label{eq:SGdefdelta}
\|S\diamond G\|_{L^p(\O\times J;D((w+A)^{\alpha}))}\leq C \|G\|_{L^p(\O\times J;\g(H,D((w+A)^{\alpha-\frac12})))},
\end{equation}
for each $G\in L^p_{\F}(\O\times J;\g(H,D((w+A)^{\alpha-\frac12})))$. One can easily deduce that $A$ satisfies \ref{eq:SGdefdelta} if and only if $A\in\RegJ$.
\end{remark}

Before going further, we introduce an homogeneous version of stochastic maximal $L^p$-regularity:
\begin{definition}[Homogeneous Stochastic Maximal $L^p$-regularity]\label{def:RegJw}
Let $X$ be a UMD space with type $2$ and let $p\in [2, \infty)$. The operator $A$ is said to have  {\em homogeneous stochastic maximal $L^p$-regularity} if for each $G\in L^p_{\F}(\O\times \R_+;\g(H,X))$ the stochastic convolution $S\diamond G$ takes values in $D(A^{1/2})$ $\P\times dt$-a.e.\ and
\begin{equation}\label{eq:SGdefw}
\|A^{1/2} S\diamond G\|_{L^p(\O\times \R_+;X)}\leq C \|G\|_{L^p(\O\times \R_+;\g(H,X))},
\end{equation}
for some $C>0$ independent of $G$. In this case we write $A\in \RegRw$.
\end{definition}
There is no need for the homogeneous version of $\RegJ$ for $J=(0,T)$ with $T<\infty$, since in this situation by Corollary \ref{cor:Ito} we have
$$\|S\diamond G\|_{L^p(\Omega\times J;X)} \leq c_T \|G\|_{L^p(\Omega\times J;\g(H,X))}.$$
Moreover, it is clear that if $A\in \RegRw$ for some $p\in[2,\infty)$ and $0\in \rho(A)$ (thus $0\in \rho(A^{1/2})$) then $A\in\RegR$.
The converse is also true as Corollary \ref{cor:hom} below shows.

We will mainly study the class $\RegJ$ (for $T\in (0,\infty]$). However, many results can be extended to the class $\RegRw$ without difficulty.

In order to state the following result we introduce the following condition:
\begin{assumption}\label{ass:JRbdd}
Let $X$ be a UMD Banach space with type $2$ and let $p\in [2, \infty)$. Assume that the following family is $R$-bounded
\begin{equation*}
\{J_{\delta}\}_{\delta>0} \subseteq \calL (L^p_{\F}(\Omega\times\R_+;\g(H,X)),L^p(\Omega\times\R_+;X)),
\end{equation*}
where
$J_{\delta}f(t):=\frac{1}{\sqrt{\delta}}\int_{(t-\delta)\vee 0}^t f(s)dW_H(s)$.
\end{assumption}
The above holds for $p\in (2, \infty)$ if $X$ is isomorphic to a closed subspace of an $L^q(S)$ space with $q\in [2, \infty)$. If $q=2$, one can also allow $p=2$.
The following central result was proved in \cite{MaximalLpregularity,NVW11,NVW13}; see also Remark \ref{r:SMRthetafromHinfinite}.
\begin{theorem}\label{thm:SMRmain}
Suppose that Assumption \ref{ass:JRbdd} is satisfied. If $A$ has a bounded $H^\infty$-calculus with $\omega_{H^\infty}(A)<\pi/2$, then $A\in \RegRw$.
\end{theorem}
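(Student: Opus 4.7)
The plan is to combine the It\^o isomorphism (Theorem~\ref{th:Ito}) with the square function characterization coming from the bounded $H^{\infty}$-calculus, and then to use Assumption~\ref{ass:JRbdd} to sum a dyadic decomposition. By Remark~\ref{r:Stochasticintegralwelldef} it suffices to prove the estimate for $G\in L^p_{\F}(\Omega\times\R_+;\g(H,D(A)))$, and for such $G$ the It\^o isomorphism gives, for each $t>0$,
\begin{equation*}
\E\|A^{1/2}(S\diamond G)(t)\|_X^p \eqsim_{p,X} \E\|\one_{(0,t)}(\cdot)\,A^{1/2}S(t-\cdot)G(\cdot)\|_{\g(\R_+;H,X)}^p.
\end{equation*}
Integrating in $t$ reduces everything to controlling the right-hand side in $L^p(\Omega\times\R_+)$.

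Next I would exploit the $H^{\infty}$-calculus. The function $\psi(z):=z^{1/2}e^{-z}$ lies in $H^{\infty}_0(\Sigma_{\nu})$ for every $\nu\in(\angH(A),\pi/2)$; the standing hypothesis $\angH(A)<\pi/2$ is precisely what ensures such a $\nu$ exists, so that $e^{-z}$ decays on $\Sigma_\nu$. Since $\psi(\sigma A)=\sigma^{1/2}A^{1/2}S(\sigma)$, we can write $A^{1/2}S(u)=u^{-1/2}\psi(uA)$. The bounded $H^{\infty}$-calculus on the UMD space $X$ then delivers both the $R$-boundedness of $\{\psi(uA):u>0\}$ and the Kalton--Weis $\g$-multiplier theorem, which transforms $R$-bounded operator-valued functions into bounded multipliers on $\g$-spaces. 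In particular one obtains vector-valued square function estimates of the type $\|u\mapsto\psi(uA)f(u)\|_{\g(\R_+,du/u;X)}\lesssim\|f\|_{\g(\R_+,du/u;X)}$.

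To connect the $\g$-side coming from It\^o with the family $\{J_\delta\}$, I would dyadically decompose $(0,t)$ into intervals $I_k$ of length comparable to $2^{-k}t$. On each $I_k$ the kernel $A^{1/2}S(t-\cdot)$ is, up to an operator drawn from an $R$-bounded family with geometric decay in $k$, a constant multiple of $(2^{-k}t)^{-1/2}\psi(2^{-k}tA)$; this is where the $R$-boundedness of $\{\psi(uA)\}$ is crucial. Using this and Theorem~\ref{th:Ito} a second time in the opposite direction on $I_k$ rewrites the contribution of $I_k$ as $R_k\,(J_{\delta_k}G)(t)$, with $\delta_k\asymp 2^{-k}t$ and $\{R_k\}$ $R$-bounded. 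Assumption~\ref{ass:JRbdd} combined with Kahane's contraction principle then bounds $\sum_k\|R_k(J_{\delta_k}G)(t)\|$ in $L^p(\Omega\times\R_+;X)$ by $\|G\|_{L^p(\Omega\times\R_+;\g(H,X))}$, which is the desired estimate.

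The main obstacle is the third step: properly coupling the $\g$-norm appearing on the It\^o side with the $R$-bounded operator families $\{J_\delta\}$ and $\{\psi(uA)\}$ without losing control of $k$. This requires a careful application of the $\g$-multiplier theorem so that both the behavior of $\psi$ at $0$ and at $\infty$ contribute a summable factor $2^{-k\alpha}$, and the use of the It\^o isomorphism ``in reverse'' on each dyadic piece to manufacture a $J_{\delta_k}$ from a stochastic integral on $I_k$. The restriction on $p$ entering through Assumption~\ref{ass:JRbdd} is then the only place where the choice of $p\in[2,\infty)$ matters.
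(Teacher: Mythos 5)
First, a point of order: the paper does not prove Theorem \ref{thm:SMRmain} at all --- it is imported from \cite{MaximalLpregularity,NVW11,NVW13} (see also Remark \ref{r:SMRthetafromHinfinite}), so there is no in-paper argument to compare yours against. Your sketch does follow the broad strategy of the original proofs in those references: the It\^o isomorphism, the identity $A^{1/2}S(u)=u^{-1/2}\psi(uA)$ with $\psi(z)=z^{1/2}e^{-z}\in H^\infty_0(\Sigma_\nu)$ for $\angH(A)<\nu<\pi/2$, a dyadic decomposition in $t-s$, and Assumption \ref{ass:JRbdd} to control the resulting local averages. The skeleton is the right one.

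However, the step you yourself flag as ``the main obstacle'' is a genuine gap, not a deferrable technicality. The claim that on each dyadic interval $I_k$ the kernel $A^{1/2}S(t-\cdot)$ equals, up to an $R$-bounded family $\{R_k\}$ with geometric decay, a fixed operator composed with the kernel of $J_{\delta_k}$ does not hold as stated: for $s\in I_k$ the leftover factor $S(t-s-c_k)$ still depends on $s$ and cannot simply be pulled out of the stochastic integral, so the contribution of $I_k$ is not literally $R_k (J_{\delta_k}G)(t)$. More tellingly, if such a decomposition were available with summable operator norms $\|R_k\|\lesssim 2^{-k\alpha}$, then the triangle inequality together with mere \emph{uniform} boundedness of $\{J_\delta\}_{\delta>0}$ would finish the proof, and the $R$-boundedness demanded by Assumption \ref{ass:JRbdd} would never be used --- a sign that the pieces cannot be summed this way. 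In the actual argument the dyadic pieces must be recombined through randomized sums: one needs an estimate of the form $\bigl\|\sum_k \varepsilon_k J_{\delta_k}G_k\bigr\|\lesssim\bigl\|\sum_k \varepsilon_k G_k\bigr\|$ (this is where Assumption \ref{ass:JRbdd} is consumed), paired with the discrete square-function estimates $\bigl\|\sum_k \varepsilon_k \psi(2^kA)x_k\bigr\|\lesssim\bigl\|\sum_k \varepsilon_k x_k\bigr\|$ and their duals coming from the bounded $H^\infty$-calculus; the $R$-boundedness of $\{\psi(uA):u>0\}$ itself also needs justification via the chain $H^\infty$-calculus $\Rightarrow$ BIP $\Rightarrow$ $R$-sectoriality on UMD spaces (Remark \ref{r:BIPimplies}), not directly from boundedness of the calculus. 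Until that recombination is carried out, what you have is a plausible outline of the known proof rather than a proof.
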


\subsection{Deterministic characterization and immediate consequences}
In the next proposition we make a first reduction to the case where $G$ does not depend on $\O$.
\begin{proposition}\label{prop:omegaind}
Let $X$ be a UMD space with type $2$, let $p\in [2, \infty)$, let $J=(0,T)$ with $T\in (0,\infty]$ and fix $w>\omega_0(-A)$. Then the following are equivalent:

\let\ALTERWERTA\theenumi
\let\ALTERWERTB\labelenumi
\def\theenumi{(H1)}
\def\labelenumi{(H1)}
\begin{enumerate}[{\rm (1)}]
\item\label{it:omegaind1} $A\in \RegJ$.
\item\label{it:omegaind2} There exists a constant $C$ such that for all $G\in L^p(J; \g(H,D(A)))$,
\begin{equation*}
\Big(\int_0^T \|s\mapsto (w+A)^{1/2}S(t-s)G(s)\|_{\g(0,t;H,X)}^p \, dt\Big)^{1/p} \leq C \|G\|_{L^p(J;\g(H,X))}.
\end{equation*}
\end{enumerate}
\end{proposition}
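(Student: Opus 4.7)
The plan is to use the \Ito{} isomorphism to rewrite, pointwise in $t$, the $L^p(\Omega;X)$-norm of the stochastic convolution $(w+A)^{1/2}(S\diamond G)(t)$ as a deterministic $\gamma(0,t;H,X)$-norm of the integrand. Integrating in $t$ and applying Fubini then turns condition (1) into its deterministic counterpart (2) and vice versa. A preliminary reduction: since $w>\omega_0(-A)$, the operator $w+A$ is invertible and sectorial, hence $(w+A)^{-1/2}\in\calL(X)$, so $\|x\|_{D((w+A)^{1/2})}\eqsim \|(w+A)^{1/2}x\|_X$. Thus (1) is equivalent to an $L^p(\Omega\times J;X)$-estimate on $(w+A)^{1/2}S\diamond G$.

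By Remark \ref{r:Stochasticintegralwelldef} it suffices to test both conditions on $G\in L^p_{\F}(\Omega\times J;\g(H,D(A)))$ (for (1)) and $G\in L^p(J;\g(H,D(A)))$ (for (2)). For any such $G$ and any fixed $t\in J$ the process $s\mapsto (w+A)^{1/2}S(t-s)G(s)\one_{[0,t]}(s)$ is $\F$-adapted and lies in $L^p(\Omega;\g(0,t;H,X))$, so Theorem \ref{th:Ito} applied on $[0,t]$ yields
\[
\E\|(w+A)^{1/2}(S\diamond G)(t)\|_X^p \eqsim_{p,X}\E\|s\mapsto (w+A)^{1/2}S(t-s)G(s)\|^p_{\g(0,t;H,X)},
\]
with constants independent of $t$. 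Integrating in $t\in J$ and using Fubini on the right-hand side gives the key identity
\[
\|(w+A)^{1/2}S\diamond G\|^p_{L^p(\Omega\times J;X)}\eqsim_{p,X}\E\int_0^T\|s\mapsto (w+A)^{1/2}S(t-s)G(s)\|^p_{\g(0,t;H,X)}\,dt. \qquad (\ast)
\]

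The implication \ref{it:omegaind1}$\Rightarrow$\ref{it:omegaind2} follows by specializing $(\ast)$ to deterministic $G\in L^p(J;\g(H,D(A)))$: the second half of Theorem \ref{th:Ito} removes the expectation on the right-hand side (the integrand is nonrandom), and $\|G\|_{L^p(\Omega\times J;\g(H,X))}=\|G\|_{L^p(J;\g(H,X))}$, so (1) gives exactly (2).

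For \ref{it:omegaind2}$\Rightarrow$\ref{it:omegaind1}, take $G\in L^p_{\F}(\Omega\times J;\g(H,D(A)))$. By Fubini, for $\P$-a.e.\ $\omega$ the path $G(\cdot,\omega)$ belongs to $L^p(J;\g(H,D(A)))$, and (2) applies to give
\[
\int_0^T\|s\mapsto (w+A)^{1/2}S(t-s)G(s,\omega)\|^p_{\g(0,t;H,X)}\,dt\leq C^p\|G(\cdot,\omega)\|^p_{L^p(J;\g(H,X))}.
\]
Taking expectations, using Fubini again and invoking $(\ast)$ yields the estimate \eqref{eq:SGdef}, which is (1).

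The main technical point is to secure the \Ito{} equivalence at each fixed $t$ with a constant independent of $t$, so that $(\ast)$ genuinely decouples $\Omega$ and $J$. This is precisely what Theorem \ref{th:Ito} provides, so no real obstacle arises. The measurability and path-selection issues needed to apply (2) pathwise are handled by the standing assumption $G\in L^p_{\F}(\Omega\times J;\g(H,D(A)))$ together with Fubini.
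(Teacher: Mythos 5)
Your proposal is correct and follows essentially the same route as the paper's proof: apply the \Ito{} isomorphism of Theorem \ref{th:Ito} pointwise in $t$ to identify $\|(w+A)^{1/2}S\diamond G(t)\|_{L^p(\Omega;X)}$ with the $\gamma(0,t;H,X)$-norm of the integrand, integrate over $J$, and in the converse direction apply the deterministic estimate pathwise in $\Omega$ before taking expectations. The only cosmetic difference is that the paper runs the density argument through adapted step processes, whereas you work directly with $G\in L^p_{\F}(\Omega\times J;\g(H,D(A)))$, which is equally justified by Remark \ref{r:Stochasticintegralwelldef}.
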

\begin{proof}

\eqref{it:omegaind1} $\Rightarrow$ \eqref{it:omegaind2}: For $G\in L^p(J; \g(H,D(A)))$, Theorem \ref{th:Ito} provides the two-sides estimates
\[\|(w+A)^{1/2} S\diamond G(t)\|_{L^p(\O;X)} \eqsim_{p,X} \|s\mapsto (w+A)^{1/2}S(t-s)G(s)\|_{\g(0,t;H,X)}.\]
Now the claim follows by taking $L^p(J)$-norms in the previous inequalities.

\eqref{it:omegaind2} $\Rightarrow$ \eqref{it:omegaind1}:  As in the previous step, we employ Theorem \ref{th:Ito}. Indeed, for any $t\in J$ and $G$ an adapted step process, we have
\begin{equation*}
\|(w+A)^{1/2}S\diamond G(t)\|_{L^p(\Omega;X)}^p \eqsim_{p,X} \E\|s\mapsto (w+A)^{1/2}S(t-s)G(s)\|^p_{\g(0,t;H,X)}.
\end{equation*}
Integrating over $t\in J$, we get
\begin{equation*}
\begin{aligned}
\|(w+A)^{1/2} S\diamond G\|_{L^p(\Omega\times J;X)}^p &\eqsim_{X,p}  \E \int_0^T \|s\mapsto (w+A)^{1/2}S(t-s)G(s)\|_{\g(0,t;H,X)}^p\,dt\\
&\leq C^p \E\int_0^T\|G(t)\|_{\g(H,X)}^p \,dt=C^p \|G\|_{L^p(\Omega\times J;\g(H,X))}^p,
\end{aligned}
\end{equation*}
where in the last we have used the inequality in \eqref{it:omegaind2} pointwise in $ \Omega$. The claim follows by density of the adapted step process in $L^p_{\F}(\Omega\times J; \g(H,X))$.
\end{proof}

\begin{proposition}\label{prop:trans}
Let $X$ be a UMD space with type $2$, let $p\in [2, \infty)$. Let $J = (0,T)$ with $T\in (0,\infty]$ and assume $A\in \RegJ$. Then:
\begin{enumerate}[{\rm (1)}]
\item\label{it:trans1} If $T<\infty$ and $\lambda\in \C$, then $A+\lambda\in \RegJ$.
\item\label{it:trans2} If $T=\infty$ and $\lambda\in \C$ is such that $\Re\lambda\geq 0$, then $A+\lambda\in \RegR$.
\item\label{it:trans3} If $T\in (0,\infty]$ and $\lambda>0$, then $\lambda A\in \RegJlambda$.
\end{enumerate}
\end{proposition}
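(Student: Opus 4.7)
The plan is to invoke the deterministic characterization in Proposition \ref{prop:omegaind} and reduce the three assertions to estimates in $\g(0,t;H,X)$. Using the observation (after Definition \ref{def:RegJ}) that the class $\RegJ$ is independent of the real shift $w>\omega_0(-A)$, I may freely replace $(w'+A+\lambda)^{1/2}$ by $(w+A)^{1/2}$ at the cost of an equivalence constant, since $D((w'+A+\lambda)^{1/2})=D((w+A)^{1/2})$ for any admissible real $w,w'$.

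For \eqref{it:trans1} and \eqref{it:trans2}, note that $-(A+\lambda)$ generates the semigroup $e^{-\lambda t}S(t)$. Thus the integrand in Proposition \ref{prop:omegaind}\eqref{it:omegaind2} for $A+\lambda$, applied to $G\in L^p(J;\g(H,D(A)))$, takes the form
\[
\big\|s\mapsto (w+A)^{1/2}\, e^{-\lambda(t-s)}\, S(t-s)\, G(s)\big\|_{\g(0,t;H,X)}.
\]
Since $s\mapsto e^{-\lambda(t-s)}$ is a bounded scalar function on $(0,t)$, the ideal property \eqref{eq:phiGinfty} bounds this, pointwise in $t$, by $\|e^{-\lambda(t-\cdot)}\|_{L^\infty(0,t)}$ times the analogous expression with the exponential factor removed. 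In case \eqref{it:trans1}, $T<\infty$, so this $L^\infty$-norm is uniformly controlled by $e^{|\Re\lambda|T}$. In case \eqref{it:trans2}, $\Re\lambda\ge 0$, so the supremum (achieved as $s\to t$) equals $1$. Raising to the $p$-th power, integrating over $t\in J$, and applying Proposition \ref{prop:omegaind}\eqref{it:omegaind2} to $A\in\RegJ$ then yield the required estimate for $A+\lambda$.

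For \eqref{it:trans3}, $-\lambda A$ generates $t\mapsto S(\lambda t)$, and for $\lambda>0$ the scaling property of the functional calculus gives $(w+\lambda A)^{1/2}=\lambda^{1/2}(w/\lambda+A)^{1/2}$ with $w/\lambda>\omega_0(-A)$. Proposition \ref{prop:omegaind}\eqref{it:omegaind2} applied to $\lambda A$ on $(0,T/\lambda)$ requires bounding
\[
\int_0^{T/\lambda}\big\|s\mapsto(w+\lambda A)^{1/2}\, S(\lambda(t-s))\, G(s)\big\|_{\g(0,t;H,X)}^p\,dt.
\]
The $L^2$-isometry $U\colon L^2(0,\lambda t;H)\to L^2(0,t;H)$ defined by $(U\tilde h)(s):=\lambda^{1/2}\tilde h(\lambda s)$, together with the ideal property, rewrites the inner $\gamma$-norm (for each fixed $t$) as $\|v\mapsto(w/\lambda+A)^{1/2}\, S(\lambda t-v)\, G(v/\lambda)\|_{\g(0,\lambda t;H,X)}$; here the $\lambda^{1/2}$ coming from the fractional power cancels against the $\lambda^{-1/2}$ coming from $U$. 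Substituting $u=\lambda t$ in the outer integral and $s=v/\lambda$ at the end, while invoking Proposition \ref{prop:omegaind}\eqref{it:omegaind2} for $A\in\RegJ$, produces the desired bound $C^p\|G\|_{L^p(0,T/\lambda;\g(H,X))}^p$.

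The argument is essentially bookkeeping; the only subtle point is keeping the various shifts $w,w'$ and the $\lambda$-rescaling consistent with the fractional-power domain identifications, so that the equivalence-of-norms constants are absorbed cleanly and no conceptual obstacle arises.
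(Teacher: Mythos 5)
Your proposal is correct and follows essentially the same route as the paper: both reduce each assertion to the deterministic characterization of Proposition \ref{prop:omegaind}, handle the translations \eqref{it:trans1}--\eqref{it:trans2} by pulling the bounded scalar $e^{-\lambda(t-s)}$ out of the $\gamma$-norm via the ideal property \eqref{eq:phiGinfty}, and handle the dilation \eqref{it:trans3} by combining $(w+\lambda A)^{1/2}=\lambda^{1/2}(w/\lambda+A)^{1/2}$ with the dilation unitary on $L^2$ and a change of variables in the outer integral. The only cosmetic difference is that you normalize the fractional power to $(w+A)^{1/2}$ via the domain isomorphism before applying the ideal property, whereas the paper keeps $(w+\lambda+A)^{1/2}$ throughout; the bookkeeping is equivalent.
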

\begin{proof}
\eqref{it:trans1}: Note that $-A-\lambda$ generates $(e^{-\lambda t}S(t))_{t> 0}$. Then, fix $w>\omega_0(-A-\lambda)$ (thus $w+\lambda>\omega_0(-A)$) and let $G\in L^p(J; \g(H,D(A)))$.
By \eqref{eq:phiGinfty} one has
\begin{align*}
\|s\mapsto (w+\lambda+A)^{1/2} & e^{-\lambda (t-s)}S(t-s)G(s)\|_{\g(0,t;H,X)} \\ & \leq
M_{T,\lambda} \|s\mapsto (w+\lambda+A)^{1/2}S(t-s)G(s)\|_{\g(0,t;H,X)},
\end{align*}
where $M_{T,\lambda}=\sup_{\{0<s<t<T	\}}e^{-(\text{Re}\lambda)(t-s)}$.
Therefore, taking the $L^p(J)$-norms, Proposition \ref{prop:omegaind} implies the required result.

\eqref{it:trans2}: Follows by the same argument of (1) but in this case $M_{\infty,\lambda}=\sup_{\{0<s<t\}}e^{-(\text{Re}\lambda)(t-s)}$ is finite if and only if $\Re\lambda>0$.

\eqref{it:trans3}: Note that $-\lambda A$ generates $(S(\lambda t))_{t>0}$. Fix $G\in L^p(0,T/\lambda; \g(H,D(A)))$ and $w>\omega_0(-\lambda A)$ (thus $w/\lambda>\omega_0(-A)$), one has
\begin{align*}
\|s\mapsto (w+\lambda A)^{1/2} & S(\lambda (t-s))G(s)\|_{\g(0,t;H,X)} \\
& =\|s\mapsto (w+\lambda A)^{1/2} S(\lambda s)G(t-s)\|_{\g(0,t;H,X)}\\
&\eqsim_{\lambda}\|s\mapsto (\frac{w}{\lambda} + A)^{1/2}S(s)G(t-\frac{s}{\lambda})\|_{\g(0,\lambda t;H,X)}.
\end{align*}
Then integrating over $0<t< T/\lambda$, one has
\begin{align*}
\int_0^{\frac{T}{\lambda}}\|s\mapsto (w+&\lambda A)^{1/2}  S(\lambda (t-s))G(s)\|_{\g(0,t;H,X)}^p \,dt \\
&\eqsim_{\lambda}
\int_0^{\frac{T}{\lambda}}\|s\mapsto (\frac{w}{\lambda} + A)^{1/2}S(s)G(t-\frac{s}{\lambda})\|_{\g(0,\lambda t;H,X)}^p\,dt\\
&\eqsim_{\lambda}
\int_0^{T}\|s\mapsto (\frac{w}{\lambda} + A)^{1/2}S(s)G(\frac{\tau-s}{\lambda})\|_{\g(0,\tau;H,X)}^p\,d\tau\\
&\leq C_{\lambda,p,A} \int_0^{T} \|G(\frac{s}{\lambda})\|_{\g(H,X)}^p ds =C_{\lambda,p,A} \int_0^{\frac{T}{\lambda}} \|G({s})\|_{\g(H,X)}^p ds;
\end{align*}
where in the last inequality we have used that $A\in \RegJ$. Thus Proposition \ref{prop:omegaind} ensures that $\lambda A\in \RegJlambda$.
\end{proof}
In Corollary \ref{cor:T1T2} we will see a refinement of Proposition \ref{prop:trans}.

\subsection{Independence of $H$}
\label{s:independenceH}
\begin{theorem}
\label{th:independenceH}
Let $X$ be a UMD space with type $2$, let $p\in [2, \infty)$ and let $J=(0,T)$ with $T\in (0,\infty]$. The following are equivalent:
\begin{enumerate}[{\rm (1)}]
\item $A\in \RegJ$ for $H = \R$.
\item $A\in \RegJ$ for any Hilbert space $H$.
\end{enumerate}
\end{theorem}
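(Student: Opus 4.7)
The implication $(2) \Rightarrow (1)$ is immediate by taking $H = \R$. For the converse direction, my plan is to use Proposition \ref{prop:omegaind} to reduce both sides to their deterministic formulations, and then to pass from the scalar case to the $H$-valued case by a Gaussian randomisation combined with the $\gamma$-space embedding of Proposition \ref{prop:inclusiongammatype}.

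Assume $(1)$ holds. By Proposition \ref{prop:omegaind} there is a constant $C$ such that for every $g \in L^p(J; D(A))$,
\begin{equation}\label{eq:scalardet}
\biggl(\int_0^T \|s \mapsto (w+A)^{1/2} S(t-s) g(s)\|^p_{\g(0,t;X)}\, dt\biggr)^{1/p} \le C \|g\|_{L^p(J;X)}.
\end{equation}
By density it suffices to prove the $H$-valued analogue of \eqref{eq:scalardet} for finite rank integrands of the form $G(s) = \sum_{n=1}^N G_n(s) \otimes h_n^*$ with $(h_n)_{n=1}^N$ orthonormal in $H$ and $G_n \in L^p(J; D(A))$. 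On an auxiliary probability space $(\wt\Omega,\wt\P)$ carrying independent standard Gaussians $(\gamma_n)_{n=1}^N$, define the $X$-valued function $\wt G(\wt\omega,s) := \sum_n \gamma_n(\wt\omega) G_n(s)$. By the Gaussian definition of the $\g(H,X)$-norm and the Kahane--Khintchine inequality one has $\wt \E \|\wt G(\wt\omega,s)\|_X^p \eqsim_p \|G(s)\|_{\g(H,X)}^p$ for every $s$, hence $\wt \E \|\wt G\|_{L^p(J;X)}^p \eqsim_p \|G\|_{L^p(J;\g(H,X))}^p$.

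Applying \eqref{eq:scalardet} to $\wt G(\wt\omega,\cdot) \in L^p(J;D(A))$ for $\wt\P$-a.e.\ $\wt\omega$ and integrating in $\wt\omega$ gives
\[
\int_0^T \wt \E \bigl\|s \mapsto (w+A)^{1/2} S(t-s) \wt G(\wt\omega,s)\bigr\|_{\g(0,t;X)}^p\, dt \lesssim_p C^p \|G\|_{L^p(J;\g(H,X))}^p.
\]
For each $t$, set $\xi_n^t(s) := (w+A)^{1/2} S(t-s) G_n(s)$; then the integrand equals $\wt \E \bigl\|\sum_n \gamma_n \xi_n^t\bigr\|_{\g(0,t;X)}^p$, which by a further application of Kahane--Khintchine inside the Banach space $\g(0,t;X)$ is equivalent to $\|(\xi_n^t)_n\|_{\g(H,\g(0,t;X))}^p$, where $(\xi_n^t)_n$ is viewed as the operator $h_n \mapsto \xi_n^t$ from $H$ into $\g(0,t;X)$ via the orthonormal basis $(h_n)$.

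Finally, since $\g(L^2(0,t), X)$ embeds as a closed subspace of $L^2(\wt \Omega; X)$ it has type $2$, and the proof of the second embedding of Proposition \ref{prop:inclusiongammatype} is symmetric in the two Hilbert spaces involved; applying it with the roles of $L^2(0,t)$ and $H$ interchanged yields
\[
\g(H,\g(L^2(0,t),X)) \hookrightarrow \g(L^2(0,t;H),X).
\]
Under this embedding the operator $(\xi_n^t)_n$ is mapped to the operator represented by $s \mapsto (w+A)^{1/2} S(t-s) G(s)$, so $\|s \mapsto (w+A)^{1/2} S(t-s) G(s)\|_{\g(0,t;H,X)} \lesssim \|(\xi_n^t)_n\|_{\g(H,\g(0,t;X))}$. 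Combining with the previous display and then invoking Proposition \ref{prop:omegaind} in the converse direction gives $A \in \RegJ$ for $H$. The main technical point is recognising the symmetric form of Proposition \ref{prop:inclusiongammatype}; the rest is a clean Gaussian Fubini combined with Kahane--Khintchine.
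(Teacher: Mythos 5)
Your argument is correct and is essentially the paper's own proof: both randomise $G$ into the $X$-valued function $\wt G(\wt\omega,s)=\sum_n\gamma_n(\wt\omega)G(s)h_n$, apply the scalar ($H=\R$) estimate pointwise in $\wt\omega$, and return to the $\g(0,t;H,X)$-norm via Kahane--Khintchine together with the iterated $\gamma$-space embedding of Proposition \ref{prop:inclusiongammatype}. The only cosmetic difference is the order of the identifications -- the paper passes through $\g(L^2(0,t),\gamma(H,X))$ and the $\gamma$-Fubini theorem, while you use the transposed embedding $\g(H,\g(L^2(0,t),X))\hookrightarrow\g(L^2(0,t;H),X)$, which is indeed justified by the same finite-cotype comparison of the doubly indexed Gaussian sums.
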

\begin{proof}
It suffices to prove (1)$\Rightarrow$(2), since the converse is trivial. Assume (1) holds. Without loss of generality we can assume $H$ is separable (see \cite[Proposition 9.1.7]{Analysis2}).
Let $\Gamma:\R_+\to L^p(\wt{\Omega};X)$ be defined by $\Gamma(s) = \sum_{n\geq 1} \gamma_n G(s) h_n$, where $(h_n)_{n\geq 1}$  is an orthonormal basis for $H$.  Then by the Kahane--Khincthine inequalities and the definition of the $\gamma$-norm we have
\begin{align}\label{eq:gammaKK}
\|G(s)\|_{\gamma(H,X)} =\|\Gamma(s)\|_{L^2(\tilde{\O};X)} \eqsim_p  \|\Gamma(s)\|_{L^p(\tilde{\O};X)}.
\end{align}
By Proposition \ref{prop:inclusiongammatype}
\begin{align*}
\|s\mapsto (w+A)^{1/2}& S(t-s)G(s)\|_{\g(0,t;H,X)}
\\ & \lesssim_{X} \|s\mapsto (w+A)^{1/2}S(t-s)G(s)\|_{\g(0,t;\gamma(H,X))}
\\ & = \|s\mapsto (w+A)^{1/2}S(t-s)\Gamma(s)\|_{\g(0,t;L^2(\tilde{\Omega};X))}
\\ & \stackrel{(*)}{=} \|s\mapsto (w+A)^{1/2}S(t-s)\Gamma(s)\|_{L^2(\tilde{\Omega};\g(0,t;X))}
\\ & \leq \|s\mapsto (w+A)^{1/2}S(t-s)\Gamma(s)\|_{L^p(\tilde{\Omega};\g(0,t;X))},
\end{align*}
where we applied the $\gamma$-Fubini's theorem (see \cite[Theorem 9.4.8]{Analysis2}) in $(*)$.
By Fubini's theorem and Proposition \ref{prop:omegaind} we obtain
\begin{align*}
\int_J \|s\mapsto &(w+A)^{1/2} S(t-s)G(s)\|_{\g(0,t;H,X)}^p \ud t \\ &\leq \tilde{\E}\int_J \|s\mapsto (w+A)^{1/2}S(t-s)\Gamma(s)\|_{\g(0,t;X)}^p \ud t
\\ & \leq C^p \tilde{\E}\|\Gamma\|_{L^p(J;X)}^p = C^p \|\Gamma\|_{L^p(J;L^p(\tilde{\O};X))}^p \eqsim_p C^p \|G\|_{L^p(J;\g(H,X))}.
\end{align*}
where in "$\eqsim_p$" we used \eqref{eq:gammaKK}. Now the result follows from Proposition \ref{prop:omegaind}.
\end{proof}

\section{Analyticity and exponential stability\label{sec:analyticity}}
\label{s:analytic}

The main result of this section is the following.
\begin{theorem}\label{thm:analytic}
Let $X$ be a Banach space with UMD and type $2$ and let $p\in [2, \infty)$. Let $J = (0,T)$ with $T\in (0,\infty]$.
If $A\in \RegJ$, then $-A$ generates an analytic semigroup.
\end{theorem}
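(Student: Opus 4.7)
The plan is to reduce to a purely deterministic estimate via Proposition~\ref{prop:omegaind} and Theorem~\ref{th:independenceH}, test it with a constant-in-$s$ input to get a $\gamma$-square-function bound on $(w+A)^{1/2}S(\cdot)x$, deduce that $S(t)$ maps into $D(A)$ for every $t>0$, and finally appeal to a known characterization of analyticity. First I would apply Theorem~\ref{th:independenceH} to reduce to $H=\R$ (so $\gamma(H,X)=X$), and by Proposition~\ref{prop:omegaind} rephrase $A\in\RegJ$ as the deterministic bound
\[
\int_0^T \bigl\|s\mapsto(w+A)^{1/2}S(t-s)G(s)\bigr\|_{\gamma(0,t;X)}^p\,dt \;\leq\; C^p\,\|G\|_{L^p(J;X)}^p
\qquad (G\in L^p(J;D(A))).
\]

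Testing this against $G(s)=\one_{(0,T)}(s)\,x$ with $x\in D(A)$, the substitution $u=t-s$ in the inner norm (an isometry of $L^2(0,t)$) turns the integrand into $\|u\mapsto(w+A)^{1/2}S(u)x\|_{\gamma(0,t;X)}^p$, which is non-decreasing in $t$ by \eqref{eq:phiGinfty}. Fixing $T'\in(0,T)$ and restricting the outer integral to $(T',T)$ yields
\[
\bigl\|u\mapsto (w+A)^{1/2}S(u)x\bigr\|_{\gamma(0,T';X)} \;\leq\; C\bigl(\tfrac{T}{T-T'}\bigr)^{1/p}\|x\|,
\]
and density extends this bound to every $x\in X$. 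Identifying this element of $\gamma(0,T';X)$ with its strongly measurable representative (via closedness of $(w+A)^{1/2}$ and an adjoint-testing argument applied to an approximating sequence $x_n\in D(A)$ with $x_n\to x$ in $X$) gives $S(u)x\in D((w+A)^{1/2})$ for a.e.\ $u\in(0,T')$. The semigroup identity $S(u)=S(u-u_0)S(u_0)$ together with invariance of $D((w+A)^{1/2})$ under $S$ upgrades this to every $u>0$, and a further bootstrap through $S(u)=S(u/2)S(u/2)$ using the commutativity of $(w+A)^{1/2}$ with $S$ on its domain promotes it to $S(u)X\subseteq D(A)$ for all $u>0$.

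The hard part will be converting this qualitative differentiability and the $\gamma$-square-function bound into the quantitative estimate on $\|tAS(t)\|$ needed for analyticity. Since in a type~$2$ space the $\gamma$-norm is strictly weaker than the $L^2$-norm, the bound above is not immediately a pointwise operator estimate. To close the argument I plan to invoke Lemma~\ref{lem:analyticLp} (from~\cite{BounitDrEl}), which supplies a sufficient $L^p$-type criterion for a $C_0$-semigroup with $S(t)X\subseteq D(A)$ on $(0,\infty)$ to be analytic, feeding into it the strong continuity of $s\mapsto(w+A)^{1/2}S(s)x$ on $(0,\infty)$ together with its $\gamma$-integrability derived above.
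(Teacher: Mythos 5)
Your first half is essentially the paper's Lemma~\ref{lem:gammaest}: testing the deterministic reformulation from Proposition~\ref{prop:omegaind} with the constant input $G=\one_J\, h\otimes x$ and exploiting translation invariance of the inner $\gamma$-norm to obtain
$\|s\mapsto (w+A)^{1/2}S(s)x\|_{\gamma(J;X)}\leq C\|x\|$. That part is fine (the paper handles $T=\infty$ by truncation, but this is a routine adjustment). The problem is the last step. Lemma~\ref{lem:analyticLp} requires the \emph{quantitative} estimate $\|t\mapsto (w+A)^{1/p}S(t)x\|_{L^p(0,T;X)}\leq C\|x\|$, and you never produce it. As you yourself note, in a type~$2$ space Proposition~\ref{prop:inclusiongammatype} gives $L^2\hookrightarrow\gamma$, so the $\gamma$-square-function bound is the \emph{weaker} piece of information; it does not control any $L^q$-norm of the orbit by soft arguments, and ``strong continuity of $s\mapsto (w+A)^{1/2}S(s)x$ together with its $\gamma$-integrability'' is not a substitute for the hypothesis of Lemma~\ref{lem:analyticLp}. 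This conversion is precisely the hard core of the paper's proof: Lemma~\ref{lem:Lpgammaorbit} shows that if $X$ has cotype $p_0$ and $q>p_0$, then $\|t\mapsto A^{1/q}S(t)x\|_{L^q(\R_+;X)}\lesssim \|t\mapsto A^{1/2}S(t)x\|_{\gamma(\R_+;X)}$, and its proof needs a Littlewood--Paley decomposition of the reflected orbit, the moment inequality, the comparison of $L^p$- and $\gamma$-norms for functions with compactly supported Fourier transform, and a Besov embedding. Only after that does Lemma~\ref{lem:analyticLp} (with exponent $q$ rather than $2$) apply. Without an argument of this type your proof does not close.

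A secondary issue: the intermediate detour aimed at showing $S(u)X\subseteq D(A)$ for all $u>0$ is both delicate to justify as written (membership of $(w+A)^{1/2}S(\cdot)x$ in $\gamma(0,T';X)$ does not directly give pointwise statements for a fixed $x\in X$) and, more importantly, insufficient even if established: there exist $C_0$-semigroups that are differentiable for $t>0$ but not analytic, so qualitative smoothing cannot replace the uniform bound $\sup_{0<t\leq T}\|tAS(t)\|<\infty$ that \cite[Theorem II.4.6]{EN} requires. The quantitative $L^p$-orbit estimate is therefore unavoidable, and the missing bridge from the $\gamma$-bound to that estimate is a genuine gap.
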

The proof consists of several steps and will be explained in the next subsections.

\subsection{Square function estimates}
Next we derive a simple square function estimates from $\RegJ$. In order to include the case $T=\infty$ we need a careful analysis of the constants.
\begin{lemma}\label{lem:gammaest}
Let $X$ be a UMD space with type $2$, let $p\in [2, \infty)$, let $J= (0,T)$ with $T\in (0,\infty]$ and let $w>\omega_0(-A)$. If $A\in \RegJ$, then there is a constant $C$ such that for all $x\in X$,
\begin{equation}\label{eq:gammaest}
\|s\mapsto (w+A)^{1/2}S(s) x\|_{\g(J;X)}\leq C\|x\|.
\end{equation}
\end{lemma}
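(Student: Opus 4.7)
My strategy is to exploit the deterministic reformulation in Proposition~\ref{prop:omegaind} with a test function of the form $G(s)=\phi(s)\,x$, and then convert the resulting ``sliding window'' $\g$-norm estimates into a single $\g(J;X)$ bound via monotonicity of $t\mapsto\|g\|_{\g(0,t;X)}$ together with the semigroup law. First I would apply Theorem~\ref{th:independenceH} to reduce to $H=\R$ and Proposition~\ref{prop:omegaind} to replace the random $G$ by a deterministic one. I assume initially $x\in D(A)$, so that $g(u):=(w+A)^{1/2}S(u)x = S(u)(w+A)^{1/2}x$ is continuous into $X$; the general case follows by density of $D(A)$ in $X$ and the linearity of the estimate in $x$.

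For $T<\infty$, I plug in $G(s)=\one_{(0,T)}(s)\,x$. Since the indicator equals $1$ on $(0,t)$ for every $t\in(0,T)$, the change of variable $u=t-s$ identifies the integrand on the left of Proposition~\ref{prop:omegaind} with $\|g\|_{\g(0,t;X)}$, giving
\[\int_0^T\|g\|_{\g(0,t;X)}^p\,dt\le C^p T\|x\|^p.\]
Because $t\mapsto\|g\|_{\g(0,t;X)}$ is nondecreasing (by the ideal property applied to the isometric embedding $L^2(0,t)\hookrightarrow L^2(0,T)$), restricting to $t\in(T/2,T)$ yields $\|g\|_{\g(0,T/2;X)}\le 2^{1/p}C\|x\|$. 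To promote this to the full interval I would split $J=(0,T/2)\cup(T/2,T)$ and apply the semigroup identity $g(u)=(w+A)^{1/2}S(u-T/2)\,S(T/2)x$ for $u\ge T/2$ together with translation invariance of the $\g$-norm, which reduces $\|g\|_{\g(T/2,T;X)}$ to $\|(w+A)^{1/2}S(\cdot)\,S(T/2)x\|_{\g(0,T/2;X)}\le 2^{1/p}CM\|x\|$ with $M:=\sup_{[0,T/2]}\|S(\cdot)\|$; the subadditivity \eqref{eq:Geq} then closes the argument.

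For $T=\infty$ the constant-in-time choice is no longer in $L^p$, so instead I would take $G(s)=e^{-\alpha s}x$ with $\alpha>0$, for which $\|G\|_{L^p(\R_+;X)}^p=\|x\|^p/(\alpha p)$. After the same change of variable, the SMR inequality becomes
\[\int_0^\infty e^{-\alpha pt}\|e^{\alpha u}g(u)\|_{\g(0,t;X)}^p\,dt\le \frac{C^p}{\alpha p}\|x\|^p.\]
The ideal property (Proposition~\ref{prop:idealprop}) applied to the bounded multiplier $e^{-\alpha u}\in L^\infty(0,t)$ gives $\|g\|_{\g(0,t;X)}\le\|e^{\alpha u}g\|_{\g(0,t;X)}$; multiplying through by $\alpha p$ and bounding the integral from below by its restriction to $t>T_0$ (using monotonicity once more) yields
\[e^{-\alpha pT_0}\|g\|_{\g(0,T_0;X)}^p\le C^p\|x\|^p.\]
Letting first $\alpha\to 0^+$ and then $T_0\to\infty$ delivers $\|g\|_{\g(\R_+;X)}\le C\|x\|$. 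The main subtlety is precisely this last step: the exponential weight reverses the direction one would naturally read the SMR inequality, and one has to combine monotonicity of $t\mapsto\|g\|_{\g(0,t;X)}$ with a double limit (Fatou/monotone convergence in disguise) to recover the desired unweighted $\g(\R_+;X)$ estimate from a family of weighted $L^p$ bounds.
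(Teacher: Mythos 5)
Your argument is correct, and for $T<\infty$ it is essentially identical to the paper's: test Proposition \ref{prop:omegaind} with the rank-one constant input $\one_J\, h\otimes x$, use the reflection $s\mapsto t-s$ and the monotonicity of $t\mapsto \|g\|_{\g(0,t;X)}$ to get the bound on $(0,T/2)$, then transfer to $(T/2,T)$ via the semigroup law and $S(T/2)$, and conclude with \eqref{eq:Geq}. (The detour through Theorem \ref{th:independenceH} is unnecessary — fixing a unit vector $h\in H$ suffices, which is what the paper does — but it is harmless.) The only genuine divergence is the case $T=\infty$: the paper simply tests with the truncations $\one_{[0,R]}\,h\otimes x$, observes that the $\RegR$ constant is independent of $R$, repeats the finite-interval argument to get $\|g\|_{\g(0,R/2;X)}\le C\|x\|$ uniformly in $R$, and invokes $\|g\|_{\g(\R_+;X)}=\sup_{R>0}\|g\|_{\g(0,R/2;X)}$; you instead test with $e^{-\alpha s}x$, absorb the resulting weight $e^{\alpha u}$ by the contractive multiplier $e^{-\alpha u}$ via \eqref{eq:phiGinfty}, and pass to the limit $\alpha\to 0^+$ before $T_0\to\infty$. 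Both routes are valid and both ultimately rest on the same $\gamma$-Fatou fact from \cite[Proposition 2.4]{NVW1} for the final supremum; the paper's truncation is a bit shorter, while your exponential-weight computation is a nice self-contained alternative that avoids having to restate the finite-$T$ argument with an $R$-uniform constant. One small point of rigor that applies to both proofs: Proposition \ref{prop:omegaind}\eqref{it:omegaind2} is stated for $D(A)$-valued $G$, so one should (as you do, and the paper leaves implicit) first take $x\in D(A)$ and then extend \eqref{eq:gammaest} to all of $X$ by density.
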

\begin{proof}
First assume $T<\infty$ and fix $h\in H$ with $\|h\|=1$. Let $G\in L^p(J;\g(H,X))$ be given by $G(t) = \one_{J} h\otimes x$. Then  for $t\in [T/2,T]$ one can write
\begin{align*}
\|s\mapsto (w+A)^{1/2}S(s)x \|_{\g(0,T/2;X)} & \leq \|s\mapsto (w+A)^{1/2}S(s)x \|_{\g(0,t;X)} \\ & = \|s\mapsto (w+A)^{1/2}S(t-s)x \|_{\g(0,t;X)} \\ & = \|s\mapsto (w+A)^{1/2}S(t-s)G(s)\|_{\g(0,t;H,X)}.
\end{align*}
Therefore, taking $p$-th powers on both sides integration over $t\in J$, and applying Proposition \ref{prop:omegaind}  yields
\begin{align*}
T \|s\mapsto (w+A)^{1/2}S(s)x \|_{\g(0,T/2;X)}^p & \leq  \int_0^{T} \|s\mapsto (w+A)^{1/2}S(t-s)G(s)\|_{\g(0,t;H,X)}^p \, dt
\\ & \leq C^p \|G\|_{L^p(J;\g(H,X))}^p = C^p T \|x\|^p.
\end{align*}
Therefore,
\begin{equation}\label{eq:ineqSx}
\|s\mapsto (w+A)^{1/2}S(s)x \|_{\g(0,T/2;X)} \leq C \|x\|, \ \ x\in X.
\end{equation}
By the left-ideal property and \eqref{eq:ineqSx} we see that
\begin{align*}
\|s\mapsto (w+A)^{1/2}S(s)x \|_{\g(T/2,T;X)} & = \|s\mapsto S(\tfrac{T}{2}) (w+A)^{1/2}S(s-\tfrac{T}{2})x \|_{\g(T/2,T;X)}
\\ & \leq \|S(\tfrac{T}{2})\| \, \|s\mapsto (w+A)^{1/2}S(s)x \|_{\g(0,T/2;X)} \\ & \leq C  \|S(\tfrac{T}{2})\|  \, \|x\|.
\end{align*}
Combining this with \eqref{eq:ineqSx} and \eqref{eq:Geq} yields
\begin{align*}
\|(w+A)^{1/2} &S(s) x\|_{\g(J;X)} \\
 &\leq\|(w+A)^{1/2} S(s) x\|_{\g(0,T/2;X)} + \|(w+A)^{1/2} S(s) x\|_{\g(T/2,T;X)}\\
 &\leq C_{S,T} \|x\|.
\end{align*}

Next we consider $T=\infty$. Applying Proposition \ref{prop:omegaind} with $G \one_{[0,R]}$ with $R>0$ fixed and \eqref{eq:phiGinfty} gives that
\begin{align*}
\Big(\int_0^{R} \|s\mapsto (w+A)^{1/2}S(t-s)G(s)\|_{\g(0,t;H,X)}^p \, dt\Big)^{1/p} \leq C \|G\|_{L^p(0,R;\g(H,X))},
\end{align*}
where $C$ is independent of $R$. Therefore, arguing as in \eqref{eq:ineqSx} we obtain that for all $R<\infty$,
\[\|s\mapsto (w+A)^{1/2}S(s)x \|_{\g(0,R/2;X)} \leq C \|x\|.\]
The result now follows since (see \cite[Proposition 2.4]{NVW1})
\[\|s\mapsto (w+A)^{1/2}S(s)x \|_{\g(\R_+;X)} = \sup_{R>0}\|s\mapsto (w+A)^{1/2}S(s)x \|_{\g(0,R/2;X)}.\]
\end{proof}
Choosing $w=0$ in (\ref{lem:gammaest}) in Lemma \ref{lem:gammaest}, we obtain the following:
\begin{corollary}
Suppose that $A\in \RegR$, $\omega_0(-A)<0$ and set $\varphi(z):=z^{1/2}e^{-z}$, then there exists a constant $c>0$ such that
\begin{equation*}
\|t\mapsto \varphi(tA)x\|_{\g(\R_+,\frac{dt}{t};X)}\leq c \|x\|\,,
\end{equation*}
for all $x\in X$.
\end{corollary}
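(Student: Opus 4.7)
The plan is to invoke Lemma \ref{lem:gammaest} with the specific choice $w=0$, which is permissible precisely because the hypothesis $\omega_0(-A)<0$ guarantees $0>\omega_0(-A)$. With this choice the lemma yields directly
\[
\|s\mapsto A^{1/2}S(s)x\|_{\gamma(\R_+;X)}\leq C\|x\|, \qquad x\in X.
\]
Since $-A$ generates $S$, we have $S(t)=e^{-tA}$, hence
\[
\varphi(tA)x=(tA)^{1/2}e^{-tA}x=t^{1/2}A^{1/2}S(t)x.
\]
So everything reduces to a change of reference measure on $\R_+$, from Lebesgue $dt$ to $dt/t$.

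For that change, I would use that the map $U:L^2(\R_+,dt)\to L^2(\R_+,dt/t)$, $(Uf)(t):=t^{1/2}f(t)$, is an isometric isomorphism of Hilbert spaces (direct verification: $\int_0^\infty |t^{1/2}f(t)|^2 \,dt/t=\int_0^\infty |f(t)|^2\,dt$). Interpret the function $t\mapsto \varphi(tA)x$ as the operator $R:L^2(\R_+,dt/t)\to X$ it represents; then $R\circ U$ is represented, relative to Lebesgue measure, by $t\mapsto t^{-1/2}\varphi(tA)x=A^{1/2}S(t)x$. By the ideal property (Proposition \ref{prop:idealprop}) applied to the unitary $U$,
\[
\|\varphi(\cdot A)x\|_{\gamma(\R_+,dt/t;X)}=\|R\|_{\gamma(L^2(\R_+,dt/t),X)}=\|R\circ U\|_{\gamma(L^2(\R_+,dt),X)}=\|A^{1/2}S(\cdot)x\|_{\gamma(\R_+;X)}.
\]
Combining this identity with the estimate from Lemma \ref{lem:gammaest} yields the desired bound with $c=C$.

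There is really no obstacle here: everything beyond the application of Lemma \ref{lem:gammaest} is a one-line measure-substitution argument at the level of $\gamma$-norms. The only point that needs mild care is to justify $w=0$ in the lemma (for which the assumption $\omega_0(-A)<0$ is precisely what is required) and to confirm that $A^{1/2}$ and the half-plane semigroup machinery used in Definition \ref{def:RegJ} behave as expected when $w=0$ is admissible, i.e.\ that $A^{1/2}$ agrees with the fractional power invoked in $\varphi$.
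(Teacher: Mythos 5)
Your proposal is correct and follows essentially the same route as the paper, which simply states that the corollary is obtained by "choosing $w=0$ in Lemma \ref{lem:gammaest}"; your explicit verification that the unitary $U:L^2(\R_+,dt)\to L^2(\R_+,dt/t)$, $(Uf)(t)=t^{1/2}f(t)$, together with the ideal property converts $\|A^{1/2}S(\cdot)x\|_{\g(\R_+;X)}$ into $\|\varphi(\cdot A)x\|_{\g(\R_+,\frac{dt}{t};X)}$ is exactly the (omitted) substitution the paper has in mind.
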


\subsection{Sufficient conditions for analyticity}

To prove Theorem \ref{thm:analytic}  we need several additional results which are of independent interest. The next result is a comparison result between $\gamma$-norms and $L^p$-norms of certain orbits for spaces with cotype $p$. Related estimates for general analytic functions can be found in \cite[Theorem 4.2]{VerWeLPS}, but are not applicable here.

\begin{lemma}\label{lem:Lpgammaorbit}
Let $X$ be a Banach space with cotype $p$. Let $\omega_0(-A)<0$. Then for all $q>p$ there exists a $C>0$ such that for all $x\in D(A^2)$,
\[
\|t\mapsto A^{1/q}S(t) x\|_{L^q(\R_+;X)} \leq C \|t\mapsto A^{1/2}S(t) x\|_{\g(\R_+;X)}.
\]
Moreover, if $p=2$, then one can take $q=2$ in the above.
\end{lemma}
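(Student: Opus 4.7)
The first move is to reduce the inequality to a comparison between the $L^q$- and $\g$-norms of a single orbit. Since $\omega_0(-A)<0$ one has $0\in\rho(A)$, and hence $A^{-\a}\in\calL(X)$ for every $\a\ge 0$ (e.g.\ via the Balakrishnan formula, using $\|S(t)\|\lesssim e^{-\omega t}$ with $\omega>0$). In particular $A^{1/q-1/2}\in\calL(X)$, since $1/q-1/2\le 0$. Writing $A^{1/q}S(t)x=A^{1/q-1/2}\cdot A^{1/2}S(t)x$ pointwise in $t$ yields
\[
\|A^{1/q}S(\cdot)x\|_{L^q(\R_+;X)}\le \|A^{1/q-1/2}\|_{\calL(X)}\,\|f\|_{L^q(\R_+;X)},\qquad f(t):=A^{1/2}S(t)x,
\]
so it suffices to prove $\|f\|_{L^q(\R_+;X)}\le C\|f\|_{\g(\R_+;X)}$.

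The ``moreover'' part, $p=q=2$, is then immediate: cotype $2$ provides the continuous embedding $\g(L^2(\R_+),X)\hookrightarrow L^2(\R_+;X)$ (a standard consequence of the Gaussian cotype-$2$ inequality combined with Kahane--Khintchine), and applying this to $f$ closes this case.

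For the main case $q>p$ I partition $\R_+=\bigcup_{n\ge 0}I_n$ with $I_n=[n,n+1]$, so that $(\one_{I_n})_{n\ge 0}$ is an orthonormal system in $L^2(\R_+)$. The Gaussian cotype-$p$ inequality applied to $(T_f\one_{I_n})_{n}$ (with $T_f h:=\int f\,h\,dt$), combined with Kahane--Khintchine, gives
\[
\Big(\sum_{n\ge 0}\|\bar f_n\|^p\Big)^{1/p}\le c_p(X)\,\|f\|_{\g(\R_+;X)},\qquad \bar f_n:=\int_{I_n}f(t)\,dt,
\]
whence by $\ell^p\hookrightarrow\ell^q$ the same bound holds with exponent $q$ on the left. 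Next, the orbit identity $f(t)=S(t-n)f(n)$ on $I_n$ together with $\sup_{s\in[0,1]}\|S(s)\|_{\calL(X)}\le M$ yields $\int_{I_n}\|f\|^q\,dt\le M^q\|f(n)\|^q$, so $\|f\|_{L^q(\R_+;X)}^q\le M^q\sum_n\|f(n)\|^q$, and the task is reduced to bounding $\sum_n\|f(n)\|^q$ by $\sum_n\|\bar f_n\|^q$.

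The main obstacle lies in this last comparison. Setting $B:=\int_0^1 S(s)\,ds$, one has $\bar f_n=Bf(n)$; the identity $AB=I-S(1)$ shows that $AB$ is invertible in $\calL(X)$ (since $r(S(1))=e^{\omega_0(-A)}<1$), but the would-be inverse $B^{-1}=(I-S(1))^{-1}A$ involves the unbounded $A$, so $B$ is not bounded below on $X$ and no pointwise-in-$n$ estimate $\|f(n)\|\lesssim\|\bar f_n\|$ is available. The comparison must therefore be done globally. I expect it to follow from the telescoping identity $f(n)-f(n+1)=A\bar f_n$, which together with the exponential decay $\|f(N)\|\to 0$ gives the absolutely convergent series representation $f(n)=\sum_{m\ge n}A\bar f_m$, combined with a discrete Hardy-type inequality that uses the strict gap $q>p$ to absorb the unboundedness of $A$ into the summation and recover $(\sum\|f(n)\|^q)^{1/q}\le C\|f\|_\g$, completing the proof.
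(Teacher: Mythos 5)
Your reduction in the first step is fatal: after factoring $A^{1/q}S(t)x=A^{1/q-1/2}\,A^{1/2}S(t)x$ and pulling the bounded operator $A^{1/q-1/2}$ out of the $L^q$-norm, you are left with the claim $\|f\|_{L^q(\R_+;X)}\le C\|f\|_{\g(\R_+;X)}$ for $f=A^{1/2}S(\cdot)x$, and this claim is false. Take $X=\ell^2$ (cotype $p=2$), $q>2$, $A$ the diagonal operator $Ae_n=2^ne_n$ (as in Proposition \ref{prop:counterex}) and $x=e_n\in D(A^2)$. Writing $\lambda=2^n$ and using that $\g(\R_+;\ell^2)=L^2(\R_+;\ell^2)$, one computes $\|A^{1/2}S(\cdot)e_n\|_{\g(\R_+;\ell^2)}=\lambda^{1/2}(2\lambda)^{-1/2}=2^{-1/2}$, whereas $\|A^{1/2}S(\cdot)e_n\|_{L^q(\R_+;\ell^2)}=q^{-1/q}\lambda^{1/2-1/q}\to\infty$ as $n\to\infty$. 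The lemma survives only because its left-hand side carries $A^{1/q}$ rather than $A^{1/2}$: the factor $A^{1/q-1/2}$ is precisely the smoothing that compensates for passing from the $\g$-norm (an $L^2$-type quantity) to $L^q$, and it cannot be discarded as a global bounded multiplier. Consistently, the comparison you leave open at the end (a ``discrete Hardy-type inequality'' bounding $\sum_n\|f(n)\|^q$ by the $\bar f_n$) cannot be supplied, since together with your correct preliminary steps it would prove the false statement above; note also that your representation $f(n)=\sum_{m\ge n}A\bar f_m$ involves the unbounded operator $A$ applied to the very quantities you control, with no mechanism to absorb it.

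The paper's proof keeps both powers of $A$ in play and exploits their difference \emph{blockwise} rather than globally: it performs a Littlewood--Paley decomposition in the time variable of $t\mapsto A^{1/q}S(|t|)x$, proves a Bernstein-type estimate $\|f_n\|_{L^p}\lesssim 2^{-n\alpha}\|A^{\alpha}f_n\|_{L^p}$ with $\alpha=\tfrac12-\tfrac1q$ (trading the extra power $A^{1/2-1/q}$ for decay in the frequency parameter, via the moment inequality and the identity relating $f_n'$ to $Af_n$), compares $L^p$- and $\g$-norms on each frequency block at a cost of $2^{n(1/2-1/p)}$ using cotype $p$, and concludes with the embedding $B^{1/p-1/q}_{p,p}(\R;X)\hookrightarrow L^q(\R;X)$; the three exponents cancel exactly, which is where the hypothesis $q>p$ enters. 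Your argument for the case $p=q=2$ (the cotype-$2$ embedding $\g(\R_+;X)\hookrightarrow L^2(\R_+;X)$ applied directly to $A^{1/2}S(\cdot)x$) is correct, since there no power of $A$ needs to be traded; the main case, however, requires a genuinely different mechanism.
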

The right-hand side of the above estimate is finite. Indeed, for $x\in D(A^2)$, we have $A^{1/2}S(\cdot) x = S(\cdot) A^{1/2}x\in C^1([0,T];X)$, thus it follows from \cite[Proposition 9.7.1]{Analysis2} that $A^{1/2}S(\cdot) x\in \gamma(0,T;X)$. Now since $S$ is exponentially stable we can conclude from \cite[Proposition 4.5]{NWa} that $A^{1/2}S(\cdot) x\in \gamma(\R_+;X)$.

\begin{proof}
By an approximation argument we can assume $x\in D(A^3)$.
Let $(\phi_n)_{n\geq 0}$ be a Littlewood-Paley partition of unity as in \cite[Section 6.1]{BeLo}. Let $f:\R\to X$ be given by $f(t):=A^{1/q}S(|t|) x$. Then $f'(t) = \sign(t) A f(t)$ for $t\in \R\setminus\{0\}$. Let $f_n:=\phi_n*f$ for $n\geq 0$. Let $\psi$ be such that $\wh{\psi} = 1$ on $\supp \wh{\phi}_1$ and $\wh{\psi}\in C^\infty_c(\R\setminus\{0\})$. Set $\wh{\psi}_n(\xi) = \wh{\psi}_1(2^{-(n-1)}\xi)$ for $n\geq 1$. Then $f_n = \psi_n*f_n$.

{\em Step 1:}
We will first show that for all $\alpha\in (0,1)$, there is a constant $C$ such that for all $n\geq 0$
\begin{align}\label{eq:estfnorbitLp}
\|f_n\|_{p}&\leq C 2^{-\alpha n} \|A^{\alpha} f_n\|_{p},
\end{align}
where we write $\|\cdot\|_p := \|\cdot\|_{L^p(\R;X)}$.
As a consequence the estimate \eqref{eq:estfnorbitLp} holds for an arbitrary $\alpha>0$ if one takes $x\in D(A^{r+2})$ (where $\alpha<r\in \N$). For $n=0$ the estimate is clear from $0\in \rho(A^{\alpha})$. To prove the estimate for $n\geq 1$ note that by the moment inequality (see \cite[Theorem II.5.34]{EN}) and H\"older inequality,
\begin{equation}\label{eq:momentcons}
\|A f_n\|_{p}\leq C \|A^{\alpha} f_n\|_p^{\frac{1}{2-\alpha}} \|A^2 f_n\|_p^{\frac{1-\alpha}{2-\alpha}}.
\end{equation}
Using $f_n = \psi_n*f_n$ and the properties of $S$ we obtain
\begin{equation}
\label{eq:f'sign}
\sign(\cdot) A f_n  = \frac{d}{dt}f_n = \psi_n'*f_n.
\end{equation}
Therefore, by Young's inequality
\begin{align*}
\|A^2 f_n\|_p = \|\psi_n'*A f_n\|_p \leq \|\psi_n'\|_1 \|A f_n\|_p \leq C_{\psi} 2^{n} \|A f_n\|_p.
\end{align*}
Combining this with \eqref{eq:momentcons} we obtain
\begin{align}\label{eq:Afnestalpha}
\|A f_n\|_{p}\leq C 2^{n(1-\alpha)} \|A^{\alpha} f_n\|_p.
\end{align}

Next we prove an estimate for $\|f_n\|_p$. Let $d_t = \frac{d^2}{dt^2}$ and set $J_{\beta} = (1-d^2_t)^{\beta/2}$ for $\beta\in \R$. Then $J_{\beta_1}J_{\beta_2} = J_{\beta_1+\beta_2}$ for $\beta_1, \beta_2\in \R$. Recall from the proof of \cite[Theorem 6.1]{Am97} that for any $g\in L^p(\R;X)$ and $\beta\in \R$, we have
\[\|J_{\beta} \psi_n*g\|_p\leq C_{\beta,\psi} 2^{\beta n}  \|\psi_n*g\|_p.\]
Therefore,
\[\|f_n\|_p  =\|\psi_n*\varphi_n*f\|_p =  \|J_{-2} \psi_n*(J_2 \varphi_n) * f\|_p \leq C_{\psi}  2^{-2n} \|\psi_n*(J_2 \varphi_n) * f\|_p. \]
Now since $J_2 = 1-d_t^2$ we can estimate
\begin{equation*}
\begin{aligned}
\|\psi_n*(J_2 \varphi_n) * f\|_p  &\leq \|\psi_n*\varphi_n * f\|_p + \|d_t^2(\psi_n *\varphi_n * f)\|_p \\
&\leq C_{\psi} \|f_n\|_p + \|\psi_n'* \varphi_n* f'\|_p.
\end{aligned}
\end{equation*}
By Young's inequality
\[\|\psi_n'* \varphi_n*f' \|_p\leq \|\psi_n'\|_1 \|\varphi_n*f'\|_p \leq C_{\psi}2^n \|(f_n)'\|_p=C_{\psi}2^n \|Af_n\|_p,\]
where in the last equality we have used \eqref{eq:f'sign}. Thus we can conclude
\begin{align}\label{eq:fnbyAfn}
\|f_n\|_p \leq C_{\psi} 2^{-n}  (\|f_n\|_p +  \|A f_n\|_p)\leq C_{\psi,A}  2^{-n}  \|A f_n\|_p,
\end{align}
where in the last step we used the fact that $A$ is invertible.

Now \eqref{eq:estfnorbitLp} follows by combining \eqref{eq:Afnestalpha} and \eqref{eq:fnbyAfn}.

\medskip

{\em Step 2:} By Step 1 with $\alpha := \frac{1}{2} - \frac1q$ and \cite[Lemma 4.1]{RozVer17} we can estimate
\begin{align*}
\|f_n\|_{p} \leq C 2^{-n \alpha}\|A^{\alpha} f_n\|_p\leq C_{p,X} 2^{-n \alpha} 2^{\frac{n}{2} - \frac{n}p}\|A^{\alpha} f_n\|_{\gamma(\R;X)}.
\end{align*}
Multiplying by $2^{\frac{n}{p} - \frac{n}{q}}$ and taking $\ell^p$-norms and applying \cite[Lemma 2.2]{KNVW} in the same way as in \cite[Theorem 1.1]{KNVW} gives
\begin{align*}
\|f\|_{B^{\frac1p-\frac1q}_{p,p}(\R;X)} & \leq C_{p,X} (\sum_{n\geq0} \|A^{\alpha} f_n\|_{\gamma(\R;X)}^p)^{1/p} \\ & \leq C_{p,X}'   \|A^{\alpha} f\|_{\gamma(\R;X)} \leq 2C_{p,X}' \|t\mapsto A^{1/2} S(t) x\|_{\g(\R_+;X)},
\end{align*}
where in the last step we used \eqref{eq:Geq}.

It remains to note that $B^{\frac1p-\frac1q}_{p,p}(\R;X)\hookrightarrow L^q(\R;X)$ (see \cite[Theorem 1.2 and Proposition 3.12]{MeyVer12}).

The final assertion for $p=2$ is immediate from Proposition \ref{prop:inclusiongammatype}.
\end{proof}

Next we show that certain $L^p$-estimates for orbits implies analyticity of the semigroup $S$. 
\begin{lemma}\label{lem:analyticLp}
Let $X$ be a Banach space and let $w>\omega_0(-A)$. If for some $T\in (0,\infty]$, $C>0$, $p\geq 2$, the operator $A$ satisfies
\begin{equation}\label{eq:ASx}
\|t\mapsto (w+A)^{1/p}S(t) x\|_{L^p(0,T;X)} \leq C \|x\|_{X},  \ \ x\in D(A),
\end{equation}
then $-A$ generates an analytic semigroup.
\end{lemma}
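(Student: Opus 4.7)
The plan is to reduce to the case where $A$ is invertible with $S$ exponentially stable, upgrade the $L^p$-orbit hypothesis to a pointwise bound $\|A^{1/p} S(t)\| \lesssim t^{-1/p}$ on $(0, T)$, iterate via the semigroup factorization to bound $\|A^{n/p} S(t)\|$ for larger $n$, and finally apply the moment inequality to obtain the classical criterion $\|tAS(t)\| \leq M$ near zero that characterizes generators of analytic semigroups. Replacing $A$ by $w+A$ is a bounded perturbation of $-A$ (preserving analyticity of the generated semigroup), after which we may assume $w = 0$, $0 \in \rho(A)$, and $M := \sup_{t\geq 0}\|S(t)\|<\infty$; the hypothesis then reads $\int_0^T \|A^{1/p} S(t) x\|^p \, dt \leq C^p \|x\|^p$ for $x \in D(A)$.

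For the pointwise bound, fix $x \in D(A)$ and $0 < s < t < T$. The semigroup identity $A^{1/p} S(t) x = S(t-s) A^{1/p} S(s) x$ yields $\|A^{1/p} S(t) x\|^p \leq M^p \|A^{1/p} S(s) x\|^p$. Integrating over $s \in (0, t)$ and invoking \eqref{eq:ASx} gives
\begin{equation*}
t \, \|A^{1/p} S(t) x\|^p \leq M^p \int_0^t \|A^{1/p} S(s) x\|^p \, ds \leq M^p C^p \|x\|^p,
\end{equation*}
so $\|A^{1/p} S(t)\|_{\calL(X)} \leq MCt^{-1/p}$ on $(0, T)$, after extending $A^{1/p} S(t)$ from $D(A)$ to all of $X$ by density together with closedness of $A^{1/p}$. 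The semigroup composition identity $A^{n/p} S(t) = (A^{1/p} S(t/n))^n$ (valid initially on $D(A^{n/p})$ and extended to $\calL(X)$ by density) then yields $\|A^{n/p} S(t)\| \leq (MC)^n n^{n/p} t^{-n/p}$.

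Taking $n := \lceil p \rceil$ so that $n/p \geq 1$ and applying the moment inequality for the sectorial invertible operator $A$,
\begin{equation*}
\|A y\| \leq C_A \|y\|^{1 - p/n} \|A^{n/p} y\|^{p/n}, \qquad y \in D(A^{n/p}),
\end{equation*}
to $y = S(t) x$ produces $\|A S(t)\| \leq C' t^{-1}$ on some interval $(0, T'')$, which is the Crandall--Pazy criterion ensuring that $-A$ generates an analytic semigroup. The main obstacle, and the point requiring the most care, is the domain-tracking throughout the iteration: a priori the compositions $A^{n/p} S(t)$ make sense only on $D(A^{n/p})$, and asserting that $S(t) X \subseteq D(A^{n/p}) \subseteq D(A)$ for $t > 0$ (needed to apply the moment inequality to $y = S(t)x$ with arbitrary $x \in X$) must be obtained by combining the quantitative bound on $(A^{1/p} S(t/n))^n$ with the closedness of the fractional power $A^{n/p}$.
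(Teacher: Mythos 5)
Your proof is correct and follows essentially the same strategy as the paper's: reduce to the exponentially stable case with $w=0$, integrate the semigroup inequality $\|A^{1/p}S(t)x\|\leq M\|A^{1/p}S(s)x\|$ over $s\in(0,t)$ to upgrade \eqref{eq:ASx} to the pointwise bound $\|A^{1/p}S(t)\|\lesssim t^{-1/p}$, and then compose to reach $\|tAS(t)\|\lesssim 1$. The only (harmless) difference is where the moment inequality enters for non-integer $p$: the paper applies it at the outset to replace $p$ by an integer $n\geq p$ in the hypothesis and then composes exactly $n$ times to land on $AS(t)$, whereas you compose $\lceil p\rceil$ times to overshoot $A$ and interpolate back down at the end; your direct integration over $(0,t)$ also replaces the paper's dyadic-scale argument, and your domain-tracking remarks correctly handle the extension of $(A^{1/p}S(t/n))^n$ to all of $X$ via closedness of $A^{n/p}$.
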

It seems that the above result was first observed in \cite[Proposition 2.7]{BounitDrEl}. The proof below is different and was found independently.

\begin{proof}
Clearly, we can assume $T<\infty$. Moreover, without loss of generality, one can reduce to the case that $S$ is exponentially stable and $w=0$. Finally, we can also assume that $p\geq 2$ is an integer. Indeed, fix $n\in \N$ such that $n\geq p$. By the moment inequality (see \cite[Theorem II.5.34]{EN}) for all $t\in [0,T]$, we have
\begin{align*}
\|(w+A)^{1/n}S(t) x\|^n &\lesssim_{n,p,A,w} \|S(t) x\|^{n-p} \|(w+A)^{1/p}S(t) x\|^{p}
\\ & \lesssim_{n,p,A,T} \|x\|^{n-p} \|(w+A)^{1/p}S(t) x\|^{p},
\end{align*}
where $C, \tilde{C}$ only dependent on $n, p, A, T, w$. Therefore,
\begin{align*}
\int_0^T \|(w+A)^{1/n}S(t) x\|^n \ud t\lesssim_{n,p,A,T,w} \|x\|^{n-p} \int_0^T \|(w+A)^{1/p}S(t) x\|^p \ud t \leq C^n \|x\|^n.
\end{align*}

To prove that $(S(t))_{t\geq 0}$ is analytic, it suffices by \cite[Theorem II.4.6]{EN} to show that   $\{t A S(t): t\in (0,T]\}\subseteq \calL(X)$ is bounded. To prove this fix $x\in D(A)$. Let $M = \sup_{t\geq 0} \|S(t)\|$. Let $t_n = \frac{T}{p 2^n}$ for $n\geq 0$. Then for all $t\in [t_{n+1}, t_n]$ we have
$\|A^{1/p} S(t_n)x\| \leq M \|A^{1/p} S(t)x\|$ and thus integration gives
\begin{align*}
\frac12 t_n \|A^{1/p} S(t_n)x\|^p & = (t_{n}-t_{n+1}) \|A^{1/p} S(t_n)x\|^p \\ & \leq M^p \int_J \|A^{1/p} S(t)x\|^p \ud t \leq M^p C^p \|x\|^p.
\end{align*}
Now fix $t\in (0,T/p]$. Choose $n\geq 0$ such that $t\in [t_{n+1}, t_n]$. Then we obtain
\begin{align*}
t \|A^{1/p} S(t)x\|^p \leq 2 M^p t_{n+1} \|A^{1/p} S(t_{n+1})x\|^p \leq 4 M^{2p} C^p \|x\|^p.
\end{align*}
By density it follows that $S(t):X\to D(A^{1/p})$ is bounded and
$t^{1/p} \|A^{1/p} S(t)\|\leq 4^{1/p} M^2 C$ for each $t\in (0,T/p]$.
We can conclude that for all $t\in (0,T]$,
\[\|t A S(t)\| = \|(t^{1/p} A^{1/p} S(t/p))^p\| \leq t \|A^{1/p} S(t/p)\|^p\leq  4p M^{2p} C^p.\]
\end{proof}

\begin{proposition}\label{prop:analyticgamma}
Let $X$ be a Banach space with finite cotype. Let $J = (0,T)$ with $T\in (0,\infty]$. Let $w>\omega_0(-A)$. If there exists a $c>0$ such that
\begin{equation}\label{eq:ASxgamma}
\|t\mapsto (w+A)^{1/2}S(t) x\|_{\g(J;X)} \leq c \|x\|,  \ \ x\in X,
\end{equation}
then $-A$ generates an analytic semigroup.
\end{proposition}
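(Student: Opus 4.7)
The plan is to chain Lemma~\ref{lem:Lpgammaorbit} and Lemma~\ref{lem:analyticLp}: the $\gamma$-square-function bound will give an $L^q$-orbit bound, from which analyticity is read off. The main obstacle is that Lemma~\ref{lem:Lpgammaorbit} is stated for an exponentially stable semigroup on the whole of $\R_+$, whereas \eqref{eq:ASxgamma} is formulated for $(w+A)$ on a possibly finite interval $J$; bridging this gap is the only real work.

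First I would reduce to a good rescaling. By enlarging $w$ if necessary one may assume $w\geq 0$; the hypothesis is preserved since the graph norms $\|(w+A)^{1/2}\cdot\|$ and $\|(w'+A)^{1/2}\cdot\|$ are equivalent for any $w,w'>\omega_0(-A)$. Set $B:=w+A$, so that $S_B(t):=e^{-wt}S(t)$ is generated by $-B$ and satisfies $\|S_B(t)\|\leq Me^{-\sigma t}$ for some $M,\sigma>0$. From the identity $B^{1/2}S_B(t)x=e^{-wt}B^{1/2}S(t)x$ and \eqref{eq:phiGinfty} applied to the multiplier $e^{-wt}\in L^\infty(J)$, the hypothesis transfers to
\[
\|t\mapsto B^{1/2}S_B(t)x\|_{\g(J;X)}\leq c\|x\|, \qquad x\in X.
\]
When $T<\infty$ this is boosted to a bound on $\R_+$ by decomposing $\R_+=\bigcup_{n\geq 0}(nT,(n+1)T)$. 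Translation invariance of the $\gamma$-norm together with the semigroup law give
\[
\|B^{1/2}S_B(\cdot)x\|_{\g(nT,(n+1)T;X)}=\|s\mapsto B^{1/2}S_B(s)S_B(nT)x\|_{\g(0,T;X)}\leq cMe^{-\sigma nT}\|x\|,
\]
and then \eqref{eq:Geq} plus a geometric sum yield $\|t\mapsto B^{1/2}S_B(t)x\|_{\g(\R_+;X)}\leq c'\|x\|$ (for $T=\infty$ this step is vacuous).

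With $\omega_0(-B)<0$ in hand and $X$ of finite cotype $p_0$, Lemma~\ref{lem:Lpgammaorbit} applied to $B$ produces, for any $q>p_0$ (or $q=2$ if $p_0=2$),
\[
\|t\mapsto B^{1/q}S_B(t)x\|_{L^q(\R_+;X)}\leq C\|x\|, \qquad x\in D(B^2).
\]
Since $B^{1/q}$ is closed and $D(B^2)$ is dense in $X$, a standard closedness argument (take $x_n\in D(B^2)$ with $x_n\to x$ in $X$, use the $L^q$-Cauchy property of $B^{1/q}S_B(\cdot)x_n$ and pass to a subsequence converging pointwise a.e.) extends this bound to all $x\in X$, in particular to $x\in D(A)=D(B)$. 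Finally, Lemma~\ref{lem:analyticLp} applied to $B$ with shift parameter $0$ and exponent $q$ yields that $-B$, and hence $-A$, generates an analytic semigroup. The genuinely new step is the $\gamma$-norm bootstrap from $J$ to $\R_+$ in the second paragraph; once that is done, the rest is simply feeding one lemma into the next.
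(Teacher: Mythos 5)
Your proof is correct and follows the same route as the paper: rescale to an exponentially stable semigroup, pass from $J=(0,T)$ to $\R_+$, and then chain Lemma \ref{lem:Lpgammaorbit} into Lemma \ref{lem:analyticLp}. The only cosmetic difference is that for the reduction to $T=\infty$ the paper cites \cite[Proposition 4.5]{NWa}, whereas you give the explicit translation/geometric-series argument (the same device used in the proof of Lemma \ref{lem:gammaest}), and you spell out the density step that the paper leaves implicit; both are fine.
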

\begin{proof}
By rescaling we can assume that $S$ is exponentially stable, thus we may take $w=0$. Moreover, by \cite[Proposition 4.5]{NWa} we can assume $T=\infty$.  Now the result follows by combining Lemmas  \ref{lem:Lpgammaorbit} and \ref{lem:analyticLp}.
\end{proof}

\begin{proof}[Proof of Theorem \ref{thm:analytic}]
By Lemma \ref{lem:gammaest} the estimate \eqref{eq:ASxgamma} holds. Moreover, since $X$ has type $2$, it has finite cotype (see \cite[Theorem 7.1.14]{Analysis2}). Therefore, by Proposition \ref{prop:analyticgamma}, $-A$ generates an analytic semigroup.
\end{proof}

From the proof of Theorem \ref{thm:analytic} we obtain the following.
\begin{remark}
Assume $A\in \RegJ$, $\omega_0(-A)<0$ and $X$ has cotype $p_0$. Let $p>p_0$. Then there is a constant $C$ such that for all $x\in X$,
\[\int_{\R_+} \|A^{1/p} S(t) x\|^p \ud t\leq C^p \|x\|^p.\]
This type of estimate gives the boundedness of some singular integrals.
\end{remark}

\subsection{Exponential stability}

\begin{proposition}[Stability]
\label{prop:stability}
Let $X$ be a UMD space with type $2$, let $p\in [2, \infty)$. If $A\in \RegR$, then $\omega_0(-A)<0$.
\end{proposition}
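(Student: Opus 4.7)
The plan is to derive Datko's $L^p$-integrability condition on orbits from the SMR estimate, and then to invoke Datko's theorem to conclude $\omega_0(-A)<0$. By Theorem \ref{th:independenceH} we may take $H=\R$, and Proposition \ref{prop:omegaind} combined with the It\^o isomorphism (Theorem \ref{th:Ito}) lets us test SMR on deterministic inputs. Fix $T_0>0$ and $x\in X$, and take $G(s):=\one_{(0,T_0)}(s)\,x$. For $t\geq T_0$ the stochastic convolution factors as $U(t)=S(t-T_0)V(T_0)$, where $V(T_0):=\int_0^{T_0}S(T_0-s)x\,dW(s)$ is a centered $X$-valued Gaussian, and SMR yields
\[
\int_0^\infty \|S(\tau)V(T_0)\|_{L^p(\Omega;X)}^p\,d\tau \;\leq\; C^p\,T_0\,\|x\|^p. \quad (\ast)
\]
By the It\^o isomorphism, strong continuity of $S$, and the type-$2$ embedding (Proposition \ref{prop:inclusiongammatype}), the normalization $V(T_0)/\sqrt{T_0}$ converges in $L^p(\Omega;X)$ to $\gamma\cdot x$ as $T_0\to 0$, for a standard real Gaussian $\gamma$; Gaussian moment equivalence (Kahane--Khintchine) then gives $\|S(\tau)V(T_0)/\sqrt{T_0}\|_{L^p(\Omega;X)}\to c_p\|S(\tau)x\|_X$ pointwise in $\tau\geq 0$, with $c_p:=\|\gamma\|_{L^p}>0$.

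For $p=2$, dividing $(\ast)$ by $T_0$ produces a bound uniform in $T_0$, and Fatou's lemma immediately yields $\int_0^\infty\|S(\tau)x\|^2\,d\tau\leq C^2\|x\|^2$ for every $x\in X$, so that Datko's theorem gives $\omega_0(-A)<0$. For general $p\in[2,\infty)$ the scaling factor $T_0^{1-p/2}$ diverges as $T_0\to 0$, and the direct Fatou argument is inconclusive. To handle this, we supplement the above with the ideal-property lower bound (Proposition \ref{prop:idealprop}) applied with the unit vector $h\equiv\one_{(0,1)}$: it yields $\|s\mapsto S(t-s)x\|_{\gamma(0,1;X)}\geq \|S(t-1)Rx\|_X$ with $Rx:=\int_0^1 S(u)x\,du$, hence $\int_0^\infty\|S(\tau)Rx\|^p\,d\tau\lesssim\|x\|^p$. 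The commutation $S(\tau)R=RS(\tau)$, the analyticity of $S$ (Theorem \ref{thm:analytic}), and the smoothing bound $\|AS(u)\|_{\calL(X)}\lesssim 1/u$, combined with the identity $S(\tau)x-S(\tau)Rx=\int_0^1\!\int_\tau^{\tau+v}AS(u)x\,du\,dv$, then propagate integrability from $RX$ to all orbits $S(\cdot)y$, $y\in X$, and one concludes by Datko.

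The main obstacle is the general-$p$ case: the smoothing estimate $\|AS(u)\|\lesssim 1/u$ needed for the propagation requires an a priori control on $\|S(u)\|$ as $u\to\infty$ that is not automatic from analyticity alone. This is recovered by first running the $p=2$-type Fatou argument (which only exploits the $L^2(\Omega)$-information contained in $(\ast)$ via the Gaussian moment equivalence) to obtain $\omega_0(-A)\leq 0$, after which the analytic bounds permit the bootstrap to strict exponential stability.
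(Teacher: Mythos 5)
Your argument is complete and correct only for $p=2$: there the Fatou/Datko route (testing the SMR estimate on $G=\one_{(0,T_0)}x$, normalizing by $\sqrt{T_0}$, and passing to the limit) does yield $\int_0^\infty \|S(\tau)x\|^2\,d\tau\lesssim \|x\|^2$ for every $x\in X$, and Datko's theorem finishes. This is a genuinely more elementary path than the paper's for that case. For $p\in(2,\infty)$, however, there is a real gap, and the proposed repair is circular. First, the ``$p=2$-type Fatou argument'' cannot be run to obtain $\omega_0(-A)\leq 0$: the obstruction is not the choice of $L^p(\Omega)$ versus $L^2(\Omega)$ moments (Gaussian moment equivalence indeed lets you replace one by the other), but the exponent $p$ on the \emph{time} integral in $(\ast)$, which is fixed by the definition of $\RegR$. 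After normalization the right-hand side is $T_0^{1-p/2}\|x\|^p$, which diverges, exactly as you noted two paragraphs earlier; no $L^2(dt)$ information is available, so you never reach $\omega_0(-A)\leq 0$. Second, even if one granted $\omega_0(-A)\leq 0$, the propagation step fails: analyticity plus $\omega_0(-A)\leq0$ only gives $\|AS(u)\|\lesssim_{\epsilon} u^{-1}e^{\epsilon u}$ (think of $\omega_0=0$ with polynomially growing $\|S(t)\|$), so $\tau\mapsto\sup_{u\in[\tau,\tau+1]}\|AS(u)\|$ need not lie in $L^p(1,\infty)$ and the identity $S(\tau)x-S(\tau)Rx=\int_0^1\int_\tau^{\tau+v}AS(u)x\,du\,dv$ does not transfer $L^p$-integrability from the orbits $S(\cdot)Rx$ to the orbits $S(\cdot)x$. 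The global bound $\|AS(u)\|\lesssim u^{-1}$ is essentially equivalent to bounded analyticity, i.e.\ to what you are trying to prove. Note also that Datko's theorem needs the orbit bound for \emph{all} $x\in X$; having it only on $R(X)$ (not known to be dense here) is insufficient.

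The reason the paper does not argue this way is precisely that for $p>2$ the $L^p(\Omega\times\R_+)$ information must be converted into a square-function statement before any Datko-type theorem can be applied. Its proof extracts from Lemma \ref{lem:gammaest} the estimate $\|s\mapsto S(s)y\|_{\gamma(\R_+;X)}\leq C'\|y\|$ for all $y\in X$ (by substituting $x=(w+A)^{-1/2}y$), invokes the $\gamma$-analogue of the Datko--Gearhart theorem from \cite[Theorem 3.2]{HaNeVe} to get uniform boundedness of $\{(\lambda+A)^{-1}:\lambda>-\varepsilon\}$, and then uses analyticity (Theorem \ref{thm:analytic}) to pass from the spectral bound $s_0(-A)<0$ to $\omega_0(-A)<0$. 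If you want to avoid the $\gamma$-Datko theorem, you would need some replacement for the step converting the $\gamma(\R_+;X)$ orbit bound into decay --- your slicing of $(\ast)$ by indicator test functions discards exactly the off-diagonal information in the $\gamma$-norm that this step requires.
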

\begin{proof}
Let $w>\omega_0(-A)$. Let $y\in X$ be arbitrary. Taking $x=(w+A)^{-1/2} y$ in Lemma \ref{lem:gammaest} one obtains
\[\|s\mapsto S(s) y\|_{\g(\R_+;X)}\leq C\|(w+A)^{-1/2} y\|\leq C' \|y\|.\]
Thus from \cite[Theorem 3.2]{HaNeVe} it follows that there is an $\varepsilon>0$ such that $\{(\lambda+A)^{-1}: \lambda>-\varepsilon\}$ is uniformly bounded.
From Theorem \ref{thm:analytic} it follows that $A$ generates an analytic semigroup, and hence  $0>s_0(-A) = \omega_0(-A)$ (see \cite[Corollary IV.3.12]{EN}).
\end{proof}

As announced in Section \ref{s:SMR} we now can prove the following:
\begin{corollary}
\label{cor:hom}
Let $A\in\RegRw$. Then $A\in \RegR$ if and only if $0\in \rho(A)$.
\end{corollary}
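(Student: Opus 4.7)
The statement is an equivalence, one direction of which is essentially spelled out in the paragraph following Definition~\ref{def:RegJw}. For ``$0\in\rho(A)\Rightarrow A\in\RegR$'' I would only record that, since $A$ is sectorial (implicit in $\RegRw$, as $A^{1/2}$ must make sense), the condition $0\in\rho(A)$ makes $\|(w+A)^{1/2}A^{-1/2}\|$ and $\|A^{1/2}(w+A)^{-1/2}\|$ both finite. Hence the graph norms of $A^{1/2}$ and $(w+A)^{1/2}$ are equivalent and the $\RegRw$-estimate of Definition~\ref{def:RegJw} transfers, up to a multiplicative constant, to the $\RegR$-estimate of Definition~\ref{def:RegJ}.

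The nontrivial direction ``$A\in\RegR\Rightarrow 0\in\rho(A)$'' in fact does not use the assumption $A\in\RegRw$ at all; it follows by chaining the two principal outputs of this section. By Proposition~\ref{prop:stability}, $A\in\RegR$ yields $\omega_0(-A)<0$. By Theorem~\ref{thm:analytic} applied with $J=\R_+$, the operator $-A$ generates an analytic semigroup. For analytic semigroups the growth bound coincides with the spectral bound (see \cite[Corollary IV.3.12]{EN}), so $\sup\{\Re\lambda:\lambda\in\sigma(-A)\}=\omega_0(-A)<0$, which rules out $0\in\sigma(-A)$ and gives $0\in\rho(-A)=\rho(A)$.

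I do not foresee any real obstacle: the corollary amounts to a repackaging of Proposition~\ref{prop:stability} and Theorem~\ref{thm:analytic}, which between them provide all the analytic input. The only point to keep in mind is that the identity $s(-A)=\omega_0(-A)$ relies on analyticity of the semigroup, and this is available precisely because $A\in\RegR$ is the hypothesis of Theorem~\ref{thm:analytic}.
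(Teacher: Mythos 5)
Your proof is correct and follows essentially the same route as the paper: the forward direction is the equivalence of the graph norms of $A^{1/2}$ and $(w+A)^{1/2}$ already noted after Definition~\ref{def:RegJw}, and the converse is exactly Proposition~\ref{prop:stability} (whose own proof already contains the appeal to Theorem~\ref{thm:analytic} and \cite[Corollary IV.3.12]{EN} that you spell out). The only minor remark is that to pass from $\omega_0(-A)<0$ to $0\in\rho(A)$ one does not actually need the equality $s(-A)=\omega_0(-A)$; the inequality $s(-A)\leq\omega_0(-A)$, valid for every strongly continuous semigroup, already suffices.
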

\begin{proof}
It remains to show that $A\in \RegR$ implies $0\in \rho(A)$ and this follows by Proposition \ref{prop:stability}.
\end{proof}

\begin{remark}
The assertion of Proposition \ref{prop:stability} does not hold if instead we only assume $A\in \RegJw$. Indeed, $-\Delta$ satisfies $\RegJw$ on $L^q(\R^d)$ with $q\in [2, \infty)$ (see \cite[Theorem 1.1 and Example 2.5]{MaximalLpregularity}), but of course $\omega_0(\Delta)=0$.
\end{remark}

\section{Independence of the time interval}
\label{s:independence}
\subsection{Independence of $T$}

It is well-known in deterministic theory of maximal $L^p$-regularity that
maximal regularity on a finite interval $J$ and exponential stability imply
maximal regularity on $\R_+$. We start with a simple result which allows to pass from $\R_+$ to any interval $(0,T)$.
\begin{proposition}\label{prop:RtoT}
Let $X$ be a UMD space with type $2$, let $p\in [2, \infty)$ and let $J=(0,T)$ with $T\in (0,\infty)$. If $A\in \RegR$, then $A\in \RegJ$.
\end{proposition}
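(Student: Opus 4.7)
The plan is to reduce the finite-interval estimate on $(0,T)$ to the infinite-interval estimate on $\R_+$ by extension by zero. Fix some $w > \omega_0(-A)$ such that the $\RegR$ estimate in Definition \ref{def:RegJ} holds with this $w$; note that by the remark after Definition \ref{def:RegJ}, the choice of $w$ is immaterial, so the same $w$ may be used to witness $A \in \RegJ$.

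Given $G \in L^p_{\F}(\Omega \times (0,T); \g(H,X))$, I would set $\tilde{G} := \one_{(0,T)} G$ and regard $\tilde{G}$ as an element of $L^p_{\F}(\Omega \times \R_+; \g(H,X))$. Since $\one_{(0,T)}$ is a deterministic bounded function, $\tilde{G}$ is an adapted process on $\R_+$, and by construction
\[
\|\tilde{G}\|_{L^p(\Omega \times \R_+; \g(H,X))} = \|G\|_{L^p(\Omega \times (0,T); \g(H,X))}.
\]
Moreover, for each $t \in (0,T)$ one has the pointwise (in $\omega$) identity
\[
(w+A)^{1/2} S \diamond \tilde{G}(t) = \int_0^t (w+A)^{1/2} S(t-s) \tilde{G}(s)\, dW_H(s) = (w+A)^{1/2} S \diamond G(t),
\]
since $\tilde{G}$ and $G$ agree on $(0,t) \subset (0,T)$. (One may first verify this for adapted step processes $G$ with values in $D(A)$, where the stochastic integral is well-defined in the sense of Remark \ref{r:Stochasticintegralwelldef}, and then pass to the limit by density.)

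Consequently, applying the hypothesis $A \in \RegR$ to $\tilde{G}$ and restricting the time integration to $(0,T)$ gives
\[
\|S \diamond G\|_{L^p(\Omega \times (0,T); D((w+A)^{1/2}))} \leq \|S \diamond \tilde{G}\|_{L^p(\Omega \times \R_+; D((w+A)^{1/2}))} \leq C \|G\|_{L^p(\Omega \times (0,T); \g(H,X))},
\]
with $C$ the constant from the $\RegR$ estimate (independent of $T$). This is precisely the $\RegJ$ estimate, so $A \in \RegJ$. There is essentially no obstacle: the argument is a clean extension-by-zero, and the only minor point to check is that the extension preserves adaptedness and the $L^p$-norm, both of which are immediate.
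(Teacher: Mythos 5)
Your proof is correct and follows essentially the same route as the paper: extend $G$ by zero to $\R_+$, observe that the stochastic convolution on $(0,t)$ for $t<T$ only sees the restriction of $\tilde G$ to $(0,t)$, apply the $\RegR$ estimate, and restrict the time integral back to $(0,T)$. The additional remarks on adaptedness, norm preservation, and density of step processes are just the details the paper leaves implicit.
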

\begin{proof}
Let $w>\omega_0(-A)$. Let $G\in L^p_{\F}(\O\times J;\g(H,X))$ and extending $G$ as $0$ on $(T,\infty)$ it follows that
\begin{align*}
\|S\diamond G\|_{L^p(\O\times J;D((w+A)^{1/2}))} & \leq \|S\diamond G\|_{L^p(\O\times\R_+;D((w+A)^{1/2}))}
\\ & \leq C \|G\|_{L^p(\O\times\R_+;\g(H,X))} = C \|G\|_{L^p(\O\times J;\g(H,X))}.
\end{align*}
\end{proof}

Next we present a stochastic version of \cite[Theorem 5.2]{Dore} of which its tedious proof is due to T.\ Kato. Our proof is a variation of the latter one.
\begin{theorem}\label{thm:TtoR}
Let $X$ be a UMD Banach space with type $2$ and let
$p\in [2,\infty)$. If $A\in \RegJ$ and $\omega_0(-A)<0$, then $A\in \RegR$.
\end{theorem}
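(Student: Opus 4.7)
The plan is to adapt Kato's iterative proof of the deterministic analogue \cite[Thm.~5.2]{Dore}, replacing Kato's trace/interpolation argument by one based on analyticity, exponential stability, and the square function estimate of Lemma~\ref{lem:gammaest}.

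First I would apply Proposition~\ref{prop:omegaind} to reduce the claim to the deterministic estimate: for $G\in L^p(\R_+;\gamma(H,X))$ (taking $w=0$ since $\omega_0(-A)<0$ gives $0\in\rho(A)$),
\begin{equation*}
\int_0^\infty \|s\mapsto A^{1/2}S(t-s)G(s)\|^p_{\gamma(0,t;H,X)}\,dt \;\leq\; C\, \|G\|^p_{L^p(\R_+;\gamma(H,X))}.
\end{equation*}
Fix $M,\omega>0$ with $\|S(t)\|\leq Me^{-\omega t}$, partition $\R_+$ into blocks $I_n=(nT,(n+1)T)$, and set $B_n:=\|G\|_{L^p(I_n;\gamma(H,X))}$, so that $\|G\|^p_{L^p(\R_+;\gamma)}=\sum_n B_n^p$.

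For each $t\in I_n$ with $n\geq 1$, subadditivity \eqref{eq:Geq} splits the inner $\gamma$-norm into a \emph{near} piece on $\gamma(nT,t;H,X)$ and a \emph{far} piece on $\gamma(0,nT;H,X)$. The near piece, after the shift $s\mapsto s-nT$, is precisely the quantity estimated by Proposition~\ref{prop:omegaind} applied to $G(\cdot+nT)|_{(0,T)}$; integrating over $t\in I_n$ and summing in $n$ gives a bound by $C\sum_n B_n^p=C\|G\|^p_{L^p(\R_+;\gamma)}$. For the far piece I would use the semigroup identity $A^{1/2}S(t-s)=S(t-nT)\cdot A^{1/2}S(nT-s)$ (valid since $s<nT<t$ and $A^{1/2}$ commutes with $S$) together with the ideal property to extract $\|S(t-nT)\|\leq Me^{-\omega(t-nT)}$; integrating this factor over $I_n$ leaves the task of bounding $\sum_n\Xi_n^p$, where $\Xi_n:=\|s\mapsto A^{1/2}S(nT-s)G(s)\|_{\gamma(0,nT;H,X)}$. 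Splitting $(0,nT)=\bigcup_{k=0}^{n-1}I_k$ and applying on each $I_k$ the analogous decomposition $A^{1/2}S(nT-s)=S((n-k-1)T)\cdot A^{1/2}S((k+1)T-s)$ extracts the exponential factor $e^{-\omega(n-k-1)T}$, so that discrete Young's inequality $\ell^1\ast\ell^p\hookrightarrow\ell^p$ reduces $\sum_n\Xi_n^p$ to $\sum_k W_k^p$, where
\begin{equation*}
W_k := \|s\mapsto A^{1/2}S((k+1)T-s)\,G(s)\|_{\gamma(I_k;H,X)}.
\end{equation*}

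The main obstacle is the endpoint-type estimate $W_k\leq C B_k$: whereas $\RegJ$ supplies only an integrated-in-$t$ bound of $\|s\mapsto A^{1/2}S(t-s)G(s)\|^p_{\gamma(0,t;H,X)}$, the quantity $W_k$ is a genuine evaluation at the single endpoint $t=(k+1)T$. I expect this step to require the combined use of the analyticity of $S$ (Theorem~\ref{thm:analytic}), the exponential decay $\|S(t)\|\leq Me^{-\omega t}$, Lemma~\ref{lem:gammaest}, and the type-$2$ embedding of Proposition~\ref{prop:inclusiongammatype}. The idea is to extend $G|_{I_k}$ by zero to $(kT,(k+2)T)$, observe that for $t>(k+1)T$ the resulting stochastic convolution equals $S(t-(k+1)T)$ applied to the endpoint value $A^{1/2}S\diamond(G|_{I_k})((k+1)T)$, and then combine the $\RegJ$ bound on the $(0,T)$ piece with the controlled decay of $S$ on $((k+1)T,(k+2)T)$ to recover the endpoint bound---the stochastic substitute for Kato's trace/interpolation step. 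Once $\sum_k W_k^p\leq C\|G\|^p_{L^p(\R_+;\gamma)}$ is available, combining with the near-piece bound yields the full estimate and hence $A\in\RegR$ via Proposition~\ref{prop:omegaind}.
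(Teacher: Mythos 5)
Your reduction via Proposition \ref{prop:omegaind}, the block decomposition, the near/far splitting, the semigroup factorization extracting the exponential factor, and the discrete Young step all match the strategy of the paper's proof. The genuine gap is exactly where you flag it: the endpoint bound $W_k\leq CB_k$, i.e.
\[
\|s\mapsto A^{1/2}S(T-s)G(s)\|_{\g(0,T;H,X)}\leq C\|G\|_{L^p(0,T;\g(H,X))},
\]
does not follow from $A\in\RegJ$, which only controls the $L^p$-average in $t$ of $\|s\mapsto A^{1/2}S(t-s)G(s)\|_{\g(0,t;H,X)}$ and says nothing about its value at a single $t$; for non-constant $G$ this function of $t$ is not monotone, so the averaging trick used in Lemma \ref{lem:gammaest} for constant orbits is unavailable. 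Your proposed fix --- writing the convolution for $t>(k+1)T$ as $S(t-(k+1)T)\,Y$ with $Y$ the endpoint value and trying to recover $\|Y\|_{L^p(\Omega;X)}$ from the integrated bound --- runs the inequality in the wrong direction: $\|S(\tau)Y\|\leq M\|Y\|$ bounds the integrand from above by $\|Y\|$, and there is no general lower bound $\|S(\tau)y\|\gtrsim\|y\|$ for an (exponentially stable) semigroup, so the endpoint value cannot be read off from the values of the convolution at later times. Also note that invoking $\RegJ$ on $(0,2T)$ at this stage would be circular, since the passage from $(0,T)$ to $(0,2T)$ (Corollary \ref{cor:T1T2}) is itself deduced from this theorem.

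The paper sidesteps any endpoint evaluation by inserting a buffer: it uses half-length blocks $T_j=jT/2$ and, for $t\in[T_j,T_{j+1}]$, takes the far region to be $(0,T_{j-1})$, which stays at distance $\geq T/2$ from $t$. There the kernel is non-singular: by analyticity (Theorem \ref{thm:analytic}) and exponential stability, $\|A^{1/2}S(t-s)\|\leq M(t-s)^{-1/2}e^{-a(t-s)/2}\leq M(T/2)^{-1/2}e^{-a(t-s)/2}$, so the type-$2$ embedding $L^2(\cdot;\g(H,X))\hookrightarrow\g(\cdot;H,X)$ of Proposition \ref{prop:inclusiongammatype} reduces the far term to a scalar convolution $k*g$ with $k(s)=e^{-as}$ and $g=\|G\|_{\g(H,X)}^2$, which Young's inequality bounds in $L^{p/2}$. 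The near region then becomes $(T_{j-1},t)$, of length at most $T$, and straddles two adjacent blocks, so the $\RegJ$ estimate is applied to $G_{j-1}+G_j$ after a shift; the resulting factor $2$ is harmless. If you replace your cut at $s=nT$ by a cut that keeps a fixed gap of width $T/2$ from $t$ (or adopt the half-block scheme), your argument closes without ever needing $W_k\leq CB_k$.
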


\begin{proof}
It suffices to check the estimate in Proposition \ref{prop:omegaind}\eqref{it:omegaind2} with $w=0$. Let $J=(0,T)$ and for each $j\in \mathbb{N}$ set $T_j := jT/2$ and $G_j := \one_{[T_j,T_{j+1})} G$. In this proof, to shorten the notation below, we will write
\[\|G\|_{\gamma(a,b)}: = \|G\|_{\gamma((a,b);H,X)}.\]
It follows from the triangle inequality and \eqref{eq:Geq} that
\begin{align*}
\Big(\int_0^\infty  &\|s\mapsto A^{1/2}  S(t-s)G(s)\|_{\g(0,t)}^p \,
dt\Big)^{\frac1p} \\ & \leq \Big(\int_0^T \|s\mapsto A^{1/2} S(t-s)G(s)\|_{\g(0,t)}^p \, dt\Big)^{\frac1p}
\\ &  \quad + \Big(\sum_{j\geq 2} \int_{T_j}^{T_{j+1}}  \|s\mapsto  A^{1/2} S(t-s)G(s)\|_{\g(0,t)}^p \,
dt\Big)^{\frac1p}
\\ &  \leq \Big(\int_0^T  \|s\mapsto A^{1/2} S(t-s)G(s)\|_{\g(0,t)}^p \, dt\Big)^{\frac1p}
\\ &  \quad + \Big(\sum_{j\geq 2} \int_{T_j}^{T_{j+1}}  \|s\mapsto A^{1/2} S(t-s)G(s)\|_{\g(0,T_{j-1})}^p \, dt\Big)^{1/p}
\\ & \quad + \Big(\sum_{j\geq 2}\int_{T_j}^{T_{j+1}}   \|s\mapsto A^{1/2}  S(t-s)(G_{j-1}(s)+G_j(s))\|_{\g(T_{j-1},t)}^p \,
dt\Big)^{\frac1p}
\\ & =: R_1+R_2+R_3.
\end{align*}
By Proposition \ref{prop:omegaind}, to prove the claim, it is enough to estimate $R_i$ for $i=1,2,3$. By assumption, $A\in\RegJ$, then by Definition \ref{def:RegJ} one has
\begin{multline*}
R_1:=  \Big(\int_0^T  \|s\mapsto A^{1/2} S(t-s)G(s)\|_{\g(0,t)}^p \, dt\Big)^{\frac1p}
\leq C\|G\|_{L^p(J;X)}\leq C\|G\|_{L^p(\R_+;X)}.
\end{multline*}
Since $t-T/2\geq T_{j-1}$ for $t\in [T_{j},T_{j+1}]$, by \eqref{eq:phiGinfty} the second term can estimated as,
\[\begin{aligned}
R_2&=\Big(\sum_{j\geq 2} \int_{T_j}^{T_{j+1}} \|s\mapsto A^{1/2} S(t-s)G(s)\|_{\g(0,T_{j-1})}^p \, dt\Big)^{\frac1p} \\
& \leq \Big(\int_{T}^{\infty} \|s\mapsto A^{1/2} S(t-s)G(s)\|_{\g(0,t-\frac{T}{2})}^p \,dt\Big)^{\frac1p}.
\end{aligned}\]
By Theorem \ref{thm:analytic}, $(S(t))_{t\geq 0}$ is exponentially stable and analytic. Therefore, there are constants $a,M>0$ such that for all $t\in \R_+$ one has $\|A^{1/2}S(t)\|\leq M t^{-1/2} e^{-at/2}$.
By Proposition \ref{prop:inclusiongammatype}, for $t\geq T$ one has
\begin{align*}
\|s\mapsto & A^{1/2} S(t-s)G(s)\|_{\g(0,t-\frac{T}{2})}
\\& \leq \tau_{2,X} \|s\mapsto A^{1/2} S(t-s)G(s)\|_{L^2((0,t-\frac{T}{2});\g(H,X))}
\\ & \leq \tau_{2,X} \|s\mapsto M (t-s)^{-1/2} e^{-a (t-s)/2} G(s)\|_{L^2((0,t-\frac{T}{2});\g(H,X))}
\\ & \leq L \|s\mapsto e^{-a (t-s)/2} G(s)\|_{L^2((0,t-\frac{T}{2});\g(H,X))}
\\ & \leq L \Big(\int_0^t e^{-a (t-s)} \|G(s)\|_{\g(H,X)}^2 \, ds\Big)^{1/2}
\\ & = L (k*g)^{1/2},
\end{align*}
where $L = \tau_{2,X}  M (T/2)^{-1/2} $,  $k(s) = \one_{\R_+}(s) e^{-as}$ and $g(s) = \one_{\R_+}(s)\|G(s)\|_{\gamma(H,X)}^2$.
Taking $L^p(T,\infty)$-norms with respect to $t$, from Young's inequality we find that
\begin{align*}
R_2
& \leq L \|(k*g)^{1/2}\|_{L^p(\R)}
\leq L \|k\|_{L^1(\R)}^{1/2} \|g\|_{L^{p/2}(\R)}^{1/2}
= L a^{-1/2}\|G\|_{L^p(\R_+;\g(H,X))}.
\end{align*}

To estimate $R_3$, writing $G_{j-1, j} = G_{j-1}+G_j$ for each $j\geq 2$ we can estimate
\begin{align*}
R_{3j}^p& :=\int_{T_j}^{T_{j+1}} \|s\mapsto A^{1/2} S(t-s)G_{j-1, j}(s)\|_{\g(T_{j-1},t)}^p \, dt
\\ & = \int_{T_j}^{T_{j+1}} \|s\mapsto A^{1/2}S(t-s-T_{j-1})G_{j-1, j}(s+T_{j-1})\|_{\g(0,t-T_{j-1})}^p\, dt\\
& \leq \int_{T/2}^{T} \|s\mapsto A^{1/2} S(t-s)G_{j-1, j}(s+T_{j-1}))\|_{\g(0,t)}^p \,dt\\
& \leq \int_{0}^{T} \|s\mapsto A^{1/2} S(t-s)G_{j-1, j}(s+T_{j-1}))\|_{\g(0,t)}^p \,dt\\
& \leq C^p \|G_{j-1, j}(\cdot+T_{j-1})\|_{L^p(J;\g(H,X))}^p,
\end{align*}
where in the last step we have used the assumption and Proposition \ref{prop:omegaind}. Thus, for the third term we write
\[\begin{aligned}
R_3&=\Big(\sum_{j\geq 2} R_{3j}^p\Big)^{\frac1p}
\leq C\Big(\sum_{j\geq 2}\|G_{j-1, j}(\cdot+T_{j-1})\|_{L^p(J;\g(H,X))}^p\Big)^{\frac1p}
\\ & \leq 2C\Big(\sum_{j\geq 1}\|G_j\|_{L^p(\R_+;\g(H,X))}^p\Big)^{\frac1p}
 \leq 2C \|G\|_{L^p(\R_+;\g(H,X))}\,,
\end{aligned}\]
in the last step used that the $G_j$'s have disjoint support. This concludes the proof.
\end{proof}

Now we can extend Proposition \ref{prop:trans}.
\begin{corollary}\label{cor:T1T2}
Let $X$ be a UMD space with type $2$, let $p\in [2, \infty)$. Let $T_1<\infty$ and suppose that $A\in \RegJone$, then the following holds true:
\begin{enumerate}[{\rm (1)}]
\item For any $\lambda>\omega_0(-A)$ one has $\lambda+A\in \RegR$.
\item For any $T_2>0$,  $A\in \RegJtwointerval$.
\item If $T\in (0,\infty]$ and $\lambda>0$, then $\lambda A\in \RegJ$.
\end{enumerate}
\end{corollary}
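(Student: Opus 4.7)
The plan is to chain Proposition \ref{prop:trans} with the two transition results in this section (Proposition \ref{prop:RtoT} and Theorem \ref{thm:TtoR}). I would prove (1) first, then derive (2) from (1), and finally derive (3) from (2).

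For (1), the translation statement in Proposition \ref{prop:trans}\eqref{it:trans1} (which crucially requires $T_1<\infty$) gives $\lambda+A\in\RegJone$ for every $\lambda\in\C$. For $\lambda>\omega_0(-A)$ one computes $\omega_0(-(\lambda+A))=\omega_0(-A)-\lambda<0$, so the operator $\lambda+A$ meets the hypotheses of Theorem \ref{thm:TtoR} and we conclude $\lambda+A\in\RegR$.

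For (2), fix any $\mu>\omega_0(-A)$. By (1), $\mu+A\in\RegR$. Proposition \ref{prop:RtoT} descends this to any finite interval: $\mu+A\in\RegJtwointerval$ for every $T_2\in(0,\infty)$. A second application of Proposition \ref{prop:trans}\eqref{it:trans1}, this time with shift $-\mu$ (allowed since $T_2<\infty$), produces $A\in\RegJtwointerval$.

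For (3), take $T<\infty$ (the relevant case; see below). Then $\lambda T<\infty$, so (2) gives $A\in\text{SMR}(p,\lambda T)$. Proposition \ref{prop:trans}\eqref{it:trans3} with scaling $\lambda$ then yields $\lambda A\in\text{SMR}(p,\lambda T/\lambda)=\text{SMR}(p,T)$. The case $T=\infty$ would amount to $\lambda A\in\RegR$, which by Proposition \ref{prop:stability} forces $\omega_0(-\lambda A)=\lambda\omega_0(-A)<0$; this is not implied by $A\in\RegJone$ alone, so either this case is tacitly excluded or one additionally assumes $\omega_0(-A)<0$, in which case Theorem \ref{thm:TtoR} promotes $A$ to $\RegR$ and Proposition \ref{prop:trans}\eqref{it:trans3} then yields $\lambda A\in\RegR$ directly.

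No substantial obstacle is expected here: all the real work is absorbed into Theorem \ref{thm:TtoR}. The only point requiring vigilance is accounting for the finiteness of the time interval each time Proposition \ref{prop:trans}\eqref{it:trans1} is invoked to shift \emph{backwards} by a complex number of arbitrary real part.
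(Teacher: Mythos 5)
Your proof is correct and follows essentially the same route as the paper: Proposition \ref{prop:trans}\eqref{it:trans1} plus Theorem \ref{thm:TtoR} for (1), then Proposition \ref{prop:RtoT} and a back-shift for (2), then the dilation for (3). Your caveat about $T=\infty$ in (3) is well taken --- the paper's own argument there only invokes (2), which covers finite intervals, so the case $T=\infty$ indeed requires the extra hypothesis $\omega_0(-A)<0$ exactly as you describe.
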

\begin{proof}
(1): By Proposition \ref{prop:trans}\eqref{it:trans2} $\lambda+A\in \RegJone$ if $\lambda> \omega_0(-A)$. Since $\om_0(-(A+\lambda))<0$ for $\lambda>\omega_0(A)$, by Theorem \ref{thm:TtoR}, we obtain that $A+\lambda\in \RegR$.

(2): By (1) we know that there exists $w$ such that $A+w\in \RegR$. Now applying Proposition \ref{prop:RtoT} we find $w+A\in \RegJtwointerval$, and thus the result follows from Proposition \ref{prop:trans}\eqref{it:trans1}.

(3): Proposition \ref{prop:trans}\eqref{it:trans3} ensures that $\lambda A\in \RegJlambda$. Now (2) implies $\lambda A\in \RegJ$.
\end{proof}

\subsection{Counterexample}

In this final section we give an example of an analytic semigroup generator $-A$ such that $A\not\in\RegJ$.
\begin{proposition}\label{prop:counterex}
Let $X$ be an infinite dimensional Hilbert space. Then there exists an operator $A$ such that $-A$ generates an analytic semigroup with $\omega_0(-A)<0$, but $A\not\in \RegJ$ for any $T\in (0,\infty]$ and $p\in [2,\infty)$.
\end{proposition}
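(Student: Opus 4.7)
The plan is to reduce the question to the failure of a square function estimate, and then invoke a known example from $H^\infty$-calculus theory. Throughout let $X$ be an infinite-dimensional Hilbert space, so that $\gamma(J;X) = L^2(J;X)$ isometrically.

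First I would observe that it suffices to exhibit $A$ with $\omega_0(-A)<0$ generating a bounded analytic semigroup on $X$ such that
\begin{equation}\label{eq:sqfail}
\sup_{\|x\|\leq 1}\int_0^\infty \|A^{1/2}S(t)x\|^2\,dt = \infty,
\end{equation}
and deduce $A\notin \RegJ$ for every $T\in(0,\infty]$ and $p\in[2,\infty)$. Indeed, suppose $A\in\RegJ$ for some finite $T$. By Theorem \ref{thm:TtoR} and $\omega_0(-A)<0$ we would then have $A\in \RegR$. Applying Lemma \ref{lem:gammaest} on $J=\R_+$ with $w=0$ (permissible since $0\in\rho(A)$), combined with the identification $\gamma(\R_+;X)=L^2(\R_+;X)$, yields exactly the square function estimate negating \eqref{eq:sqfail}. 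The same argument for $J=\R_+$ is immediate.

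The heart of the construction is therefore to produce a sectorial operator $A$ on $X$ of angle $<\pi/2$ with $0\in\rho(A)$ for which \eqref{eq:sqfail} holds. Here I would appeal to the classical McIntosh square function theorem on Hilbert space: a sectorial operator $A$ with $\omega(A)<\pi/2$ has a bounded $H^\infty$-calculus with $\omega_{H^\infty}(A)<\pi/2$ if and only if both the source estimate for $A$ and the dual estimate for $A^*$ hold, i.e.
\[
\int_0^\infty \|A^{1/2}S(t)x\|^2\,dt \lesssim \|x\|^2 \quad\text{and}\quad \int_0^\infty \|(A^*)^{1/2}S^*(t)x\|^2\,dt \lesssim \|x\|^2.
\]
Now take any operator $B$ on $X$ that is sectorial of angle $<\pi/2$, boundedly invertible, and does not admit a bounded $H^\infty$-calculus; such operators are classical (see, e.g., the McIntosh/Baillon--Clement/Le Merdy examples, which can be realized on $\ell^2$ or $L^2$). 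For such $B$ at least one of the two square function estimates above must fail; by replacing $B$ with $B^*$ if necessary (note $B^*$ still generates an analytic semigroup with $\omega_0(-B^*)=\omega_0(-B)<0$ on Hilbert space), we obtain an operator $A$ for which the source estimate, i.e.\ \eqref{eq:sqfail}, fails.

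The main obstacle is the non-constructive existence step of the preceding paragraph: rather than producing a new explicit pathological semigroup, the argument relies on the existence of sectorial operators on Hilbert space without bounded $H^\infty$-calculus. Once this is accepted, the reduction via Lemma \ref{lem:gammaest} and Theorem \ref{thm:TtoR} is a short matter of combining the Hilbert-space identification of $\gamma$-norms with the $T$-independence of $\RegJ$ proved earlier.
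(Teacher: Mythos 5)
Your proposal is correct, and the reduction is exactly the one in the paper: combine Lemma \ref{lem:gammaest} with $w=0$ (using that $\gamma(\R_+;X)=L^2(\R_+;X)$ on a Hilbert space) and Theorem \ref{thm:TtoR} to conclude that $A\in\RegJ$ for any $T\in(0,\infty]$ would force the square function estimate $\int_0^\infty\|A^{1/2}S(t)x\|^2\,dt\lesssim\|x\|^2$. Where you genuinely differ is in how you produce an operator for which this estimate fails. The paper constructs it explicitly: a diagonal operator $Ae_n=2^ne_n$ with respect to a conditional Schauder basis, for which \cite[Theorem 5.5]{LeM} asserts directly that the primal square function estimate fails (in the language of the Weiss conjecture, $A^{1/2}$ is not admissible for $A$). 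You instead argue abstractly: take any invertible sectorial operator of angle $<\pi/2$ on Hilbert space without a bounded $H^\infty$-calculus, invoke McIntosh's characterization to conclude that one of the two dual square function estimates must fail, and pass to the adjoint if necessary. Both routes are sound; yours requires the extra duality step and the full strength of McIntosh's theorem but is indifferent to the particular example, whereas the paper's concrete construction gets the failure of the primal estimate in one citation (and, as the subsequent remark in the paper notes, it is precisely the adjoint of this example that satisfies $\RegRtwoo$ without having a bounded $H^\infty$-calculus, so the duality you exploit is genuinely present in this family). One minor point common to both writeups: for a non-separable $X$ one should realize the pathological operator on a separable closed subspace and extend by, say, the identity on the orthogonal complement; this is routine and neither argument spells it out.
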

\begin{proof}
Let $(e_n)_{n\in \N}$ be a Schauder basis of $H$, for which there exists a $K>0$ such that for each finite sequence $(\alpha_n)_{n=1}^{N}\subset \C$ and
\begin{align*}
\Big\|\sum_{1\leq n\leq N} \alpha_n e_n \Big\|&\leq K\Big(\sum_{1\leq n\leq N} |\alpha_n|^2\Big)^{1/2},\\
\sup\Big\{\sum_{n\geq 1}|\alpha_n|^2\,&:\,\Big\|\sum_{n\geq 1} \alpha_n e_n\Big\|\leq 1\Big\}=\infty;
\end{align*}
for the existence of such basis see \cite[Example II.11.2]{Basis} and \cite[Example 10.2.32]{Analysis2}.
Then, define the diagonal operator $A$ by $A e_n = 2^n e_n$ with its natural domain. By \cite[Proposition 10.2.28]{Analysis2} $A$ is sectorial of angle zero and $0\in\rho(A)$. This implies that $-A$ generates an exponentially stable and analytic semigroup $S$ on $X$. In \cite[Theorem 5.5]{LeM} it was shown that for such operator $A$ there exists no $C>0$ such that for all $x\in D(A)$,
\begin{align*}
\|t\mapsto A^{1/2} S(t) x\|_{L^2(\R_+;X)}  \leq C\|x\|, \ \ \ x\in X.
\end{align*}
If $A	\in \RegR$, for some $p\in [2,\infty)$, then Lemma \ref{lem:gammaest} for $w=0$ provides such estimate (recall that for Hilbert space $X$ one has $\g(\R_+;X) = L^2(\R_+;X)$), this implies $A\notin \RegR$ for all $p\in [2, \infty)$. Since $\omega_0(-A)<0$, then Theorem \ref{thm:TtoR} shows that $A\notin\RegJ$ for any $T\in (0,\infty]$.
\end{proof}

\begin{remark}
The adjoint of the example in Proposition \ref{prop:counterex} gives an example of an operator which has $\RegRtwoo$, but which does not have a bounded $H^\infty$-calculus
(see \cite[Section 4.5.2]{ArendtHandbook}, \cite[Theorems 5.1-5.2]{LeM} and \cite[Example 10.2.32]{Analysis2}). Note that in the language of \cite{LeM} for the Weiss conjecture, $A\in \RegRtwoo$ if and only if $A^{1/2}$ is admissible for $A$. See \cite{LoVer} for more on this.
\end{remark}

\section{Perturbation theory}
\label{s:perturbation}

Combining the results of \cite{NVW11} (cf. Theorem \ref{thm:SMRmain}) with additive perturbation theory for the boundedness of the $H^\infty$-calculus, in many situations, one can obtain perturbation results for stochastic maximal regularity.
Perturbation theory for the boundedness of the $H^\infty$-calculus is quite well-understood. It allows to give conditions on $A$ and $B$ such that the sum $A+B$ has a bounded $H^\infty$-calculus again. Unfortunately, if $B$ is of the same order as $A$, then a smallness condition on $B$ is not enough (see \cite{McY}). Positive results can be found in \cite{DDHPV, KKW}.
In this section, we study more direct methods which give several other conditions on $A$ and $B$ such that the stochastic maximal regularity of $A$ implies stochastic maximal regularity of $\tilde{A}:=A+B$.

Fix $w>\omega_0(-A)$ and let $X_{\alpha} := D((w+A)^\alpha)$ with $\|x\|_{X_{\alpha}} = \|(w+A)^{\alpha} x\|$ for $\alpha>0$, and $X_{\alpha}$ is the completion of $X$ with $\|x\|_{X_{\alpha}} = \|(w+A)^{\alpha} x\|$ for $\alpha<0$ and $X_0:=X$.
These spaces do not dependent on the choice of $w$, and the corresponding norms for different values of $w$ are equivalent.
Moreover, for each $\beta,\alpha\in \R$, $(w+A)^{\alpha}:D((w+A)^{\alpha})\rightarrow R((w+A)^{\alpha})$ extends as to an isomorphism between $X_{\beta+\alpha}$ to $X_{\beta}$ and, with a slight abuse of notation, we will still denote the extension by $(w+A)^{\alpha}$. Lastly, define $A_{\alpha}:D(A_{\alpha})\subseteq X_{\alpha}\rightarrow X_{\alpha}$ where $D(A_{\alpha})=\{x\in X_{\alpha}\,:\,Ax\in X_{\alpha}\}$ the operator given by $A_{\alpha}x=Ax$ for $x\in D(A_{\alpha})$; see e.g. \cite{KKW,KuWePert} for more on this. Then if $-A$ generates a strongly continuous semigroup on $X$, then $-A_{\alpha}$ generates a strongly continuous semigroup $(S_{\alpha}(t))_{t\geq 0}$ on $X_{\alpha}$.

Lastly, in case $w+\tilde{A}$ is sectorial, consider the following condition for fixed $\alpha\in [1/2,1]$:
\begin{enumerate}
\item[(H)$_{\alpha}$]
$D((w+\tilde{A})^{\alpha})=X_{\alpha}$ and $D((w+\tilde{A})^{\alpha-\frac12})=X_{\alpha-\frac12}$.
\end{enumerate}

In Theorem \ref{thm:perturbationSMR}\eqref{it:perturbationSMR1} and \eqref{it:perturbationSMR2} below the smallness assumption already shows that $D(\tilde{A})=D(A)$. Therefore, in the important case $\alpha=1$ condition (H)$_{\alpha}$ reduces to the condition $D(\tilde{A}^{1/2})=D(A^{1/2})$.

The following is the main result of this section.
\begin{theorem}\label{thm:perturbationSMR}
Let $X$ be a UMD space with type $2$, let $p\in [2, \infty)$, $\alpha\in [1/2,1]$ and let $J = (0,T)$ with $T\in (0,\infty)$. Assume that $A\in \RegJ$, $B\in \calL(X_{\alpha},X_{\alpha-1})$ and set $\tilde{A} := (A_{\alpha-1}+B)|_{X}$. Then $\tilde{A}$ generates an analytic semigroup and $\tilde{A}\in \RegJ$ if $\emph{(H)}_{\alpha}$ holds and \textbf{at least one} of the following conditions is satisfied:
\begin{enumerate}[\rm (1)]
\item\label{it:perturbationSMR1} $A\in \MaxDetJ$. Moreover, for some $\varepsilon>0$ small enough, some $C>0$ and all $x\in X_{\alpha}$, one has
\[\|Bx\|_{X_{\alpha-1}}\leq \varepsilon\|x\|_{X_{\alpha}} +C \|x\|_{X_{\alpha-1}};\]
\item\label{it:perturbationSMR2} $B\in \calL(X_{\alpha},X_{\alpha-1+\delta})$ for some $\delta\in (0,1]$;
\item\label{it:perturbationSMR3} $-\tilde{A}$ generates a strongly continuous semigroup on $X$ and the operator $\tilde{A}_{\alpha-1}:=A_{\alpha-1}+B:X_{\alpha}\subset X_{\alpha-1}\to X_{\alpha-1}$ belongs to $\MaxDetJ$.
\end{enumerate}
\end{theorem}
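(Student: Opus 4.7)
The unifying idea is to view the mild solution of $dU+\tilde{A}U\,\ud t=G\,\ud W_H$ via the Duhamel identity
\begin{equation}\label{eq:planfp}
U=S\diamond G + K(BU), \qquad Kf(t):=\int_0^t S(t-s)f(s)\,\ud s,
\end{equation}
and to solve it as a fixed-point equation in the Banach space $Y:=L^p(\O\times J;X_\alpha)$. By Remark~\ref{r:RegJdelta}, the hypothesis $A\in\RegJ$ is exactly the statement that $V\mapsto S\diamond V$ is bounded from $L^p(\O\times J;\g(H,X_{\alpha-1/2}))$ into $Y$, so the first summand of~\eqref{eq:planfp} already lies in $Y$ with the target estimate; $B\in\calL(X_\alpha,X_{\alpha-1})$ ensures $BV\in L^p(\O\times J;X_{\alpha-1})$ for every $V\in Y$. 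The whole problem thus reduces to the mapping properties of $V\mapsto K(BV)$ on $Y$. Once a fixed point $U\in Y$ is produced, condition $\emph{(H)}_{\alpha}$ identifies $X_\alpha\cong D((w+\tilde{A})^\alpha)$, and the $\alpha$-version of SMR (Remark~\ref{r:RegJdelta} applied to $\tilde{A}$) gives $\tilde{A}\in\RegJ$.

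In case~\eqref{it:perturbationSMR1}, $A\in\MaxDetJ$ makes $K\colon L^p(J;X_{\alpha-1})\to L^p(J;X_\alpha)$ bounded with some norm $M_D$. The relative-bound hypothesis yields $\|BV\|_{L^p(\O\times J;X_{\alpha-1})}\le\varepsilon\|V\|_Y+C\|V\|_{L^p(\O\times J;X_{\alpha-1})}$, and the lower-order $C$-term is absorbed by translation. Namely, replace $A$ by $A+\mu$ with $\mu$ large: by Proposition~\ref{prop:trans}\eqref{it:trans1} both $\RegJ$ and $\MaxDetJ$ are preserved, and in the shifted norms $\|v\|_{X_\beta^\mu}:=\|(w+\mu+A)^\beta v\|$ one has $\|v\|_{X_{\alpha-1}^\mu}\le\mu^{-1}\|v\|_{X_\alpha^\mu}$. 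Choosing $\varepsilon$ small so that $M_D\varepsilon<\tfrac12$ and then $\mu$ large makes $V\mapsto K(BV)$ a strict contraction, producing the fixed point. Analyticity of $-\tilde{A}$ follows from classical relative-perturbation theory for analytic generators.

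In case~\eqref{it:perturbationSMR2}, $B$ is of lower order. Theorem~\ref{thm:analytic} gives analyticity of $S$, hence $\|S(t-s)\|_{X_{\alpha-1+\delta}\to X_\alpha}\lesssim (t-s)^{-(1-\delta)}$, and a Young-type convolution bound yields
\[\|K(BV)\|_{L^p(0,T_0;X_\alpha)}\lesssim T_0^{\delta}\,\|B\|_{\calL(X_\alpha,X_{\alpha-1+\delta})}\,\|V\|_{L^p(0,T_0;X_\alpha)},\]
so $V\mapsto K(BV)$ is a strict contraction on $L^p(\O\times(0,T_0);X_\alpha)$ for $T_0$ small. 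Concatenating the local fixed points over $(0,T_0),(T_0,2T_0),\dots$, each step being handled by the same estimate together with Proposition~\ref{prop:trans}, covers the full $(0,T)$. Analyticity of $-\tilde{A}$ again follows from standard theory, since $B$ has arbitrarily small relative bound after translating $A$.

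Case~\eqref{it:perturbationSMR3} is the main obstacle, as no smallness or lower-order structure is available. The hypothesis $\tilde{A}_{\alpha-1}\in\MaxDetJ$ with $\emph{(H)}_{\alpha}$ makes $\tilde{K}_{\alpha-1}\colon L^p(J;X_{\alpha-1})\to L^p(J;X_\alpha)$ bounded and forces $-\tilde{A}$ to be an analytic generator, so only the SMR bound for $\tilde{A}$ needs work. My plan is to take $U:=\tilde{S}\diamond G$ directly (well-defined for smooth $G$ by Remark~\ref{r:Stochasticintegralwelldef}) and to combine the two available ingredients --- the stochastic MR for $A$ and the deterministic MR for $\tilde{A}_{\alpha-1}$ --- through the operator Duhamel identities $\tilde{S}(r)=S(r)+\int_0^r S(r-\sigma)B\tilde{S}(\sigma)\,\ud\sigma=S(r)+\int_0^r \tilde{S}(r-\sigma)BS(\sigma)\,\ud\sigma$, combined with stochastic Fubini, so as to express $U$ as a bounded linear transformation of $S\diamond G\in Y$. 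I expect the most delicate step to be the rigorous justification of these manipulations at the level of stochastic integrals; I would handle it by first approximating $G$ by $\F$-adapted step processes with values in $D(A)$ and then passing to the limit using the a priori bounds already in place.
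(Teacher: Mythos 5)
Your fixed-point scheme for cases \eqref{it:perturbationSMR1} and \eqref{it:perturbationSMR2}, and your Duhamel/variation-of-constants identity for case \eqref{it:perturbationSMR3}, coincide in substance with the paper's proof: case \eqref{it:perturbationSMR1} is handled there by a contraction for $Lu=-S_{\alpha-1}*Bu+S\diamond G$ on $L^p_{\F}(\O\times J;X_\alpha)$ with the $C$-term absorbed by a shift $A\mapsto A+\lambda$, and case \eqref{it:perturbationSMR3} by the identity $\tilde S\diamond G=V-\tilde S_{\alpha-1}*BV$ with $V=S_{\alpha-1}\diamond G$, estimated via $\tilde A_{\alpha-1}\in\MaxDetJ$ (the paper justifies the identity through uniqueness of weak solutions rather than stochastic Fubini, but either route works). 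Two points, one of which is a genuine gap.

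First, the gap: in case \eqref{it:perturbationSMR2} you obtain a contraction only on a short interval $(0,T_0)$ and propose to ``concatenate the local fixed points over $(0,T_0),(T_0,2T_0),\dots$''. As stated this does not go through: on $(T_0,2T_0)$ the solution contains the term $\tilde S(t-T_0)U(T_0)$, and controlling its $L^p((T_0,2T_0);X_\alpha)$-norm requires an estimate of $U(T_0)$ in a suitable trace space together with a bound for orbits of $\tilde S$ in that space --- machinery that is not available at this stage and is not what Proposition \ref{prop:trans} provides (that proposition concerns translations and dilations of $A$, not gluing of time intervals). The correct globalization is the interval-independence result already proved in the paper: once you have $\tilde A\in\RegJone$ for some small $T_1=T_0$, Corollary \ref{cor:T1T2}(2) (which rests on Theorem \ref{thm:TtoR}) yields $\tilde A\in\RegJ$ for every finite $T$. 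This is exactly how the paper arranges the argument, by fixing $T_1$ small \emph{before} running the contraction and then invoking Corollary \ref{cor:T1T2}(2).

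Second, a smaller omission in cases \eqref{it:perturbationSMR1}--\eqref{it:perturbationSMR2}: after producing the fixed point $u$ of $u=S\diamond G-S_{\alpha-1}*Bu$ (note the sign: $\tilde A=A+B$ forces a minus in front of the convolution term), you must still identify $u$ with $\tilde S\diamond G$ before the estimate on $u$ can be read as stochastic maximal regularity for $\tilde A$. The paper does this explicitly via the chain ``mild solution $\Rightarrow$ strong solution $\Rightarrow$ mild solution for $\tilde A$'' from Subsection \ref{ss:solutionconcepts}; your write-up takes this identification for granted. It is routine, but it should be stated. Likewise, in your absorption of the $C$-term in case \eqref{it:perturbationSMR1} you should note that the deterministic maximal regularity constant for $A+\mu$ on $(0,T)$ stays bounded as $\mu\to\infty$ (because $e^{-\mu t}$ is a contractive multiplier for $\mu\ge 0$ after reducing to $\omega_0(-A)<0$); otherwise the choice ``$\varepsilon$ small, then $\mu$ large'' is circular.
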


Recall that $\MaxDetJ$ stands for deterministic maximal $L^p$-regularity. The result in \eqref{it:perturbationSMR1} is a relative perturbation result. In \eqref{it:perturbationSMR2} no deterministic maximal regularity is needed. The perturbation result in \eqref{it:perturbationSMR3} avoids an explicit smallness assumption of $B$ with respect to $A$. This result is inspired by \cite[Theorem 3.9]{VP18} where a more general setting is discussed in the case $\alpha=1$, but where a slightly different notion of stochastic maximal $L^p$-regularity is considered since there the spaces $X_{1/2}$ are assumed to be complex interpolation spaces (see \cite[Definition 3.5]{VP18}).

\begin{proof}[Proof of Theorem \ref{thm:perturbationSMR}\eqref{it:perturbationSMR1}]
{\em Step 1:} First we prove the result under the additional condition $C = 0$. This part of the argument is valid for $T\in (0,\infty]$. If $T=\infty$, then Proposition \ref{prop:stability} yields $\omega_0(-A)<\infty$. If $T<\infty$, then by Proposition \ref{prop:trans} we may assume $\omega_0(-A)<0$. It follows from \cite[Theorem 8, Remark 17]{KuWePert} that $-\tilde{A}$ generates an analytic semigroup; which we denote by $(\tilde{S}(t))_{t\geq 0}$. Moreover, for $\varepsilon$ small enough, we have $\om_0(-\tilde{A})<0$.
By Remark \ref{r:RegJdelta} and condition (H)$_{\alpha}$, we have to prove that there exists $C>0$ such for all for each $G\in L^p_{\F}(\Omega\times J;\g(H; X_{\alpha-1/2}))$,
\begin{equation}
\label{eq:pertC}
\|\tilde{S}\diamond G\|_{L^p(\Omega\times J;X_{\alpha})}\leq C \|G\|_{L^p(\Omega\times J;\g(H,X_{\alpha-1/2}))}.
\end{equation}
To do this, fix $G\in L^p_{\F}(\O\times J;\g(H,X_{\alpha-1/2}))$. Let us denote with $L$ the map from $L^p_{\F}(\O\times J;X_{\alpha})$ into itself given by
\begin{equation*}
L u = - S_{\alpha-1} * B u + S_{\alpha-1}\diamond G.
\end{equation*}
To see that $L$ maps $L^p_{\F}(\O\times J;X_{\alpha})$ into itself, note that $S\diamond G\in L^p_{\F}(\O\times J;X_{\alpha})$ since $A\in \RegJ$.
By assumption $A\in \MaxDetJ$ we also have $A_{\alpha-1}\in \MaxDetJ$. Thus
for $u,v\in L^p_{\F}(\O\times J;X_{\alpha})$,
\begin{align*}
\|L(u) - L(v)\|_{L^p(\O\times J;X_{\alpha})} &= \|S_{\alpha-1} * B (u - v)\|_{L^p(\O\times J;X_{\alpha})}  \\ & \leq C_{A,p} \|B(u-v)\|_{L^p(\O\times J;X_{\alpha-1})}
\\ & \leq C_{A,p} \varepsilon \|u-v\|_{L^p(\O\times J;X_{\alpha})}.
\end{align*}
Therefore, if $\varepsilon<1/C_{A,p}$, then $L$ is a strict contraction, and by Banach's theorem $L$ has a unique fixed point $u$. This yields
\begin{align}\label{eq:milduS}
u = - S_{\alpha-1} * B u + S\diamond G,
\end{align}
and
\begin{align*}
\|u\|_{L^p(\O\times J;X_{\alpha})} &= \|L(u)\|_{L^p(\O\times J;X_{\alpha})} \\ & \leq \|L(u)-L(0)\|_{L^p(\O\times J;X_{\alpha})} + \|L(0)\|_{L^p(\O\times J;X_{\alpha})}
\\ & \leq C_{A,p} \varepsilon\|u\|_{L^p(\O\times J;X_{\alpha})} + M\|G\|_{L^p_{\F}(\O\times J;\g(H,X_{\alpha-1/2}))}.
\end{align*}
Therefore,
\begin{equation}
\label{eq:per1reg}
\|u\|_{L^p(\O\times J;X_{\alpha})} \leq (1-C_{A,p} \varepsilon)^{-1} M \|G\|_{L^p_{\F}(\O\times J;\g(H,X_{\alpha-1/2}))}.
\end{equation}

To conclude, note that \eqref{eq:milduS} and ``mild solutions $\Rightarrow$ strong solutions'' (see Subsection \ref{ss:solutionconcepts}) implies that for all $t\in J$ a.s.
\[u(t) + \int_0^t A_{\alpha-1}u(s)+Bu(s) \,ds = \int_0^t G(s)\, dW_H(s).\]
Writing $A_{\alpha-1}u+Bu = \tilde{A} u$, ``strong solutions $\Rightarrow$ solutions mild'' yields that
\[u(t) = \tilde{S}\diamond G(t)\,, \qquad \forall t\in J.\]
This together with the inequality \eqref{eq:per1reg} concludes the proof of Step 1.

{\em Step 2:} Next assume $C>0$. We will show how one can reduce the proof to the case $C=0$. In this part of the proof we use $T<\infty$.
As before we can assume $\omega_0(-A)<0$ and $w=0$. Thus, $A$ is a sectorial operator and for each $s\in[0,1]$, the families of operators $\{A^{s}(\lambda+A)^{-s}: \lambda>0\}$ and $\{\lambda^{s}(\lambda+A)^{-s}: \lambda>0\}$ are uniformly bounded in $\calL(X)$ by a constant $M$ depending only on $A,w$ and $s$ (see \cite[Lemma 10, Remark 17]{KuWePert}).
The assumption can be rewritten as
\begin{equation}\label{eq:asrewr}
\|A^{\alpha-1} B A^{-\alpha}x\|\leq \varepsilon\|x\| +C \|{A}^{-1}x\|, \ \ \ x\in X.
\end{equation}
For each $\lambda>0$
and for $x\in X$, one has
\begin{align*}
\|(\lambda+A)^{\alpha-1} B (\lambda+A)^{-\alpha}x\|
& =\|(\lambda+A)^{\alpha-1} A^{1-\alpha} (A^{\alpha-1} B A^{-\alpha}) A^{\alpha} (\lambda+A)^{-\alpha}x\|\\
&\stackrel{(i)}{\lesssim_{A}} \|(A^{\alpha-1} B A^{-\alpha}) A^{\alpha} (\lambda+A)^{-\alpha}x\|\\
&\stackrel{(ii)}{\leq} \varepsilon\|A^{\alpha} (\lambda+A)^{-\alpha}x\|+C\|A^{-1+\alpha} (\lambda+A)^{-\alpha}x\|\\
&\stackrel{(iii)}{\lesssim_A} \varepsilon \|x\| +C \|(\lambda+A)^{-\alpha} x\|\\
& \stackrel{(i)}{\lesssim_A} \varepsilon \|x\| +C \lambda^{-\alpha}\|x\|,
\end{align*}
where in $(i)$ we used the uniform boundedness of $A^{\alpha}(\lambda+A)^{-\alpha}$ and $\lambda^{\alpha}(\lambda+A)^{-\alpha}$ for $\lambda>0$. In $(ii)$ we used \eqref{eq:asrewr}. In $(iii)$ we used that $0\in \rho(A)$ and $-1+\alpha\leq 0$.
If we choose $\varepsilon$ small enough and $\lambda>0$ large enough, then the condition of Step 1 holds, with the operator $A$ replaced by $A+\lambda$. Therefore, by Step 1 we obtain $\tilde{A}+\lambda$ generates an analytic semigroup and $\tilde{A}+\lambda\in \RegR$. Therefore, $\tilde{A}$ generates an analytic semigroup and Proposition \ref{prop:trans} implies that $\tilde{A}\in \RegJ$.
\end{proof}

If the perturbation is of a lower order, than the assumption that $A$ has deterministic maximal $L^p$-regularity can be avoided.
\begin{proof}[Proof of Theorem \ref{thm:perturbationSMR}\eqref{it:perturbationSMR2}]
As in the proof of \eqref{it:perturbationSMR1} one sees that $\tilde{A}$ generates an analytic semigroup. As in (1), due to Remark \ref{r:RegJdelta} and the hypothesis (H)$_{\alpha}$, we have only to show the estimate \eqref{eq:pertC}. Thanks to Corollary \ref{cor:T1T2}(2), we can prove the estimate \eqref{eq:pertC} where $J$ is replaced by any other interval $J_1:=(0,T_1)$, where $T_1$ will be chosen below.

Fix $G\in L^p_{\F}(\O\times J_1;\g(H,X_{\alpha-\frac12}))$. Let $L$ on $L^p_{\F}(\O\times J_1;X_{\alpha})$ be defined by
$L u = - S_{\alpha-1} * B u + S\diamond G$. By assumption we have $S\diamond G\in L^p_{\F}(\O\times J_1;X_{\alpha})$. Moreover,
by the analyticity of $S_{\alpha-1}$, for $u\in L^p_{\F}(\O\times J_1;X_{\alpha})$ we obtain
\begin{align*}
\|S_{\alpha-1} * B u(t)\|_{X_{\alpha}} &= \Big\|\int_0^t (w+A_{\alpha-1})^{1-\delta} S_{\alpha-1}(t-s) (w+A_{\alpha-1})^{\delta} B u(s) \, ds\Big\|_{X_{\alpha-1}}
\\ & \leq  C_{A,\delta} \|B\| \int_0^t (t-s)^{-(1-\delta)} \|u(s)\|_{X_{\alpha}} \, ds.
\end{align*}
Therefore, taking $L^p$-norms and Young's inequality yields
\begin{align*}
\|(w+A_{\alpha-1}) S_{\alpha-1} * B u(t)\|_{L^p_{\F}(\O\times J_1;X_{\alpha})}\leq C_{A,\delta} T^{\delta}_1  \|B\| \,\|u\|_{L^p_{\F}(\O\times J_1;X_{\alpha})}.
\end{align*}
Analogously for $u,v\in L^p_{\F}(\O\times J_1;X_{\alpha})$, one has
\begin{align*}
\|L(u) - L(v)\|_{L^p(\O\times J_1;X_{\alpha})} &= \|S_{\alpha-1} * B (u - v)\|_{L^p(\O\times J_1;X_{\alpha})} \\ & \leq C_{A,\delta} T_1^{\delta} \|B\| \, \|u-v\|_{L^p(\O\times J_1;X_{\alpha})}.
\end{align*}
Therefore, if $T_1$ is such that $C_{A,\delta} \|B\| T_1^{\delta}<1/2$, then $L$ is a contraction, and by Banach's theorem $L$ has a unique fixed point $u$. This yields
\begin{align}\label{eq:milduS2}
u =- S_{\alpha-1} * B u + S\diamond G,
\end{align}
and
\begin{align*}
\|u\|_{L^p(\O\times J_1;X_{\alpha})} &= \|L(u)\|_{L^p(\O\times J_1;X_{\alpha})} \\ & \leq \|L(u)-L(0)\|_{L^p(J\O\times J_1;X_{\alpha})} + \|L(0)\|_{L^p(\O\times J_1;X_{\alpha})}
\\ & \leq \frac12 \|u\|_{L^p(\O\times J_1;X_{\alpha})} + C_{A,\delta} T_1^{\delta}  \|B\| \, \|G\|_{L^p_{\F}(\O\times J_1;\g(H,X_{\alpha-1/2}))}.
\end{align*}
Therefore,
\[\|u\|_{L^p(\O\times J_1;X_{\alpha})} \leq 2C_{A,\delta} T_1^{\delta}  \|B\| \|G\|_{L^p_{\F}(\O\times J_1;\g(H,X_{\alpha-1/2}))}.\]
Now the proof can be completed as in the final part of Step 1 of the proof of \eqref{it:perturbationSMR1}.
\end{proof}

\begin{proof}[Proof of Theorem \ref{thm:perturbationSMR}\eqref{it:perturbationSMR3}]
This part of the proof also holds for $T=\infty$.

By assumption $-\tilde{A}$ generates a strongly continuous semigroup $\tilde{S}$ on $X$. Moreover, since $\tilde{A}_{\alpha-1}\in \MaxDetJ$, then $-\tilde{A}_{\alpha-1}$ generates an analytic semigroup $\tilde{S}_{\alpha-1}$ on $X_{\alpha-1}$;  see Subsection \ref{ss:DMR} or \cite[Corollary 4.2 and 4.4]{Dore}. Of course, if $\alpha=1$, then $\tilde{S}_{\alpha-1}=\tilde{S}$ and the first assumption is redundant.

By Proposition \ref{prop:trans} we may assume $\om_0(-\tilde{A})<0$, $J=\R_+$ and we set $w=0$. From here, the argument is the same performed in \cite[Theorem 3.9]{VP18} with minor modifications, so we only sketch the main step. To begin let $G\in L^p_{\G}(\O\times \R_+,w_{\alpha};\g(H,X_{\alpha-1/2}))$, since $A\in \RegR$, if $V:=S_{\alpha-1}\diamond G$ then
$$\|V\|_{ L^p(\O\times \R_+;X_{\alpha})}\lesssim_{\alpha,A} \|G\|_{L^p(\O\times \R_+;\g(H;X_{\alpha-1/2}))}.$$
Moreover, one can readily check that $U:=\tilde{S}\diamond G=V -\tilde{S}_{\alpha-1}*B V$, since $U$ is the unique weak solution to
$$
dU+\tilde{A} U dt=GdW_H, \qquad U(0)=0;
$$
cf. Subsection \ref{ss:solutionconcepts}. Since $\tilde{A}_{\alpha-1}\in \MaxDetR$, one has
\begin{align*}
\|U\|_{L^p(\Omega\times \R_+;D(\tilde{A}^{\alpha}))} & {\eqsim}_{\alpha, A, \tilde{A}} \|U\|_{L^p(\Omega\times \R_+;X_\alpha)}\\
&\leq \|S_{\alpha-1}\diamond G\|_{L^p(\Omega\times \R_+;X_{\alpha})} +\|\tilde{S}_{\alpha-1}*B V\|_{L^p(\Omega\times \R_+;X_{\alpha})}\\
&{\lesssim}_{A,\tilde{A},\alpha,p}\|S_{\alpha-1}\diamond G\|_{L^p(\Omega\times \R_+;X_{\alpha})} +\|B V\|_{L^p(\Omega\times \R_+;X_{\alpha-1})}\\
&\leq \|S_{\alpha-1}\diamond G\|_{L^p(\Omega\times \R_+;X_{\alpha})} +\|B\|\|V\|_{L^p(\Omega\times \R_+;X_{\alpha})}\\
&  {\lesssim}_{A,p,B} \|G\|_{L^p(\Omega\times \R_+;\g(H,X_{\alpha-1/2}))}\\
& {\eqsim}_{\alpha,A,\tilde{A},p,B} \| G\|_{L^p(\Omega\times \R_+;\g(H,D(\tilde{A}^{\alpha-1/2}))},
\end{align*}
where in the first and last step we have used (H)$_{\alpha}$. The conclusion follows by Remark \ref{r:RegJdelta} and Theorem \ref{thm:analytic}.
\end{proof}

\begin{remark}
Theorem \ref{thm:perturbationSMR}\eqref{it:perturbationSMR3} is also valid for $T=\infty$. If $C = 0$, then Theorem \ref{thm:perturbationSMR}\eqref{it:perturbationSMR1} also holds for $T=\infty$.
\end{remark}

\section{Weighted inequalities}
\label{s:weight}
\subsection{Preliminaries}\label{ss:preliminariesweights}
In this section we recall some basic fact about vector-valued Sobolev spaces and Bessel potential spaces with power weights. We refer to \cite{LMV18,MeyVer12} for details. Let $I\subseteq \R_+$ be an open interval and let $X$ be a Banach space. For $p\in (1,\infty)$, $\alpha\in \R$ and $w_{\alpha}(t):=t^{\alpha}$ we denote by $L^{p}(I,w_{\alpha};X)$ (or $L^p(a,b,w_{\alpha};X)$ if $I=(a,b)$) the set of all strongly measurable functions $f:I\rightarrow X$ such that
$$
\|f\|_{L^{p}(I,w_{\alpha};X)}:=\left(\int_I  \|f(t)\|_X^p w_{\alpha}(t)\,dt\right)^{1/p}<\infty.
$$
It is of interest to note that $w_{\alpha}$ belongs to the Muckenhoupt class $A_p$ if and only if $\alpha\in (-1,p-1)$. For $k\in \N$, let $W^{k,p}(I,w_{\alpha};X)$ denote the subspace of $L^p(I,w_{\alpha};X)$ of all functions for which $\partial^j f\in L^{p}(\R,w_{\alpha};X)$ for $j=0, \ldots, k$.

As usual, $\Sc(\R;X)$ denotes the space of $X$-valued Schwartz functions and $\Sc'(\R;X):=\calL(\Sc(\R);X)$ denotes the space of $X$-valued tempered distributions. Let $\mathcal{J}_s$ be the {\em Bessel potential operator} of order $s\in \R$, i.e.
\begin{equation*}
\mathcal{J}_s f=\mathcal{F}^{-1}((1+|\cdot|^2)^{s/2} \mathcal{F}(f))\,,\quad f\in \Sc(\R);
\end{equation*}
where $\mathcal{F}$ denotes the Fourier transform. Thus, one also has $\mathcal{J}_s:\Sc'(\R;X)\rightarrow \Sc'(\R;X)$. For $s\in \R$, $p\in (1,\infty)$, $\alpha\in(-1,p-1)$, $H^{s,p}(\R,w_{\alpha};X)\subseteq \Sc'(\R;X)$ denote the \textit{Bessel potential space}, i.e. the set of all $f\in \Sc'(\R;X)$ for which $\mathcal{J}_s f\in L^p(\R,w_{\alpha};X)$ and set $\|f\|_{H^{s,p}(\R,w_{\alpha};X)}:=\|\mathcal{J}_s f\|_{L^p(\R,w_{\alpha};X)}$.

To define vector valued weighted Bessel potential spaces on intervals, we use a standard method. Let $\Di(I;X) = C^\infty_c(I;X)$ with the usual topology and let $\Di'(I;X) = \calL(\Di(I), X)$ denote the $X$-valued distributions.
\begin{definition}
Let $p\in (1,\infty)$, $\alpha\in(-1,p-1)$ and $I\subseteq \R_+$ an open interval. Let
\begin{equation*}
H^{s,p}(I,w_{\alpha};X) = \{f\in \Di'(I;X)\,:\,\exists g \in H^{s,p}(\R,w_{\alpha};X)\\;\;\text{s.t.}\;\;g|_{I}=f\}\,,
\end{equation*}
endowed with the quotient norm $\|f\|_{H^{s,p}(I,w_{\alpha};X)}=\inf\{\|g\|_{H^{s,p}(\R,w_{\alpha};X)}\,:\,g|_{I}=f\}$.\\
Let $H^{s,p}_{0}(\R_+,w_{\alpha};X)$ be the closure of $C^{\infty}_c(\R_+;X)$ in $H^{s,p}(\R_+,w_{\alpha};X)$.
\end{definition}

To handle Bessel potential space on intervals we need the following standard result, which can be proved as in \cite[Propositions 5.5 and 5.6]{LMV18}, where the case $I = \R_+$ was treated.
\begin{proposition}
\label{prop:extension-interpolation}
Let $p\in (1,\infty)$, $\alpha\in (-1,p-1)$, and let $X$ be a UMD Banach space. Let $I\subseteq \R_+$ be an open interval.
\begin{enumerate}[\rm (1)]
\item For every $k\in \N$ there exists an extension operator $E_{k}:H^{s,p}(I,w_{\alpha};X)\rightarrow H^{s,p}(\R,w_{\alpha};X)$ such that $E_k f|_{I}=f$ for all $f\in H^{s,p}(I,w_{\alpha};X)$ and for each $s\in[0,k]$ and $E_k:C^k(\overline{I};X)\to C^k(\overline{I};X)$.
\item If $k\in \N$, $p\in (1, \infty)$, then $H^{k,p}(I,w_{\alpha};X) = W^{k,p}(I,w_{\alpha};X)$.
\item Let $\theta\in (0,1)$ and $s_0,s_1\in \R$ and set $s:=s_0(1-\theta)+\theta s_1$. Then
\begin{equation*}
[H^{s_0,p}(\R_+,w_{\alpha};X),H^{s_1,p}(\R_+,w_{\alpha};X)]_{\theta}=H^{s,p}(\R_+,w_{\alpha};X).
\end{equation*}
\end{enumerate}
\end{proposition}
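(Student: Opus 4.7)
The overall strategy is to reduce everything to the whole-line case: construct a concrete higher-order extension operator $E_k$, verify its boundedness on the integer-order scale by direct computation, and then obtain boundedness on $H^{s,p}$ for non-integer $s$ by complex interpolation. Part (3) concerns only $I=\R_+$ and can be quoted verbatim from \cite[Proposition 5.6]{LMV18}; the real work here is to build $E_k$ for an arbitrary interval $I\subseteq\R_+$ and to deduce (2) from it.

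For (1), I would define $E_k$ by Seeley-type reflection at each finite endpoint of $I=(a,b)$. Near $a$, setting
\begin{equation*}
E_k f(t) = \sum_{j=1}^{k+1} c_j \, f\bigl(a+\lambda_j(a-t)\bigr), \quad t<a,
\end{equation*}
with distinct $\lambda_j>0$ and coefficients $c_j$ solving the Vandermonde system $\sum_j c_j(-\lambda_j)^m=1$ for $m=0,\ldots,k$, guarantees that $E_k f$ matches $f$ and its derivatives up to order $k$ at $a$. If $b<\infty$ an analogous reflection is used at $b$, and the two pieces are combined by a smooth partition of unity subordinate to neighborhoods of the endpoints (multiplying the reflected parts by compactly supported cutoffs so as to land inside $H^{s,p}(\R,w_\alpha;X)$). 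Boundedness $W^{m,p}(I,w_\alpha;X)\to W^{m,p}(\R,w_\alpha;X)$ for integer $m\in\{0,\ldots,k\}$ then follows from the chain rule (producing factors $\lambda_j^m$) combined with a change of variables (changing the weight by a factor $\lambda_j^\alpha$).

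The non-integer case $s\in[0,k]$ is handled by complex interpolation applied to $E_k\circ R$ on $\R$, where $R$ denotes restriction to $I$; note that $R$ is by definition a quotient map from $H^{s,p}(\R,w_\alpha;X)$ onto $H^{s,p}(I,w_\alpha;X)$, so boundedness of $E_k\circ R$ at the level of $\R$ is equivalent to boundedness of $E_k$. The two whole-line ingredients needed are: the weighted vector-valued Mikhlin multiplier theorem, which gives $W^{m,p}(\R,w_\alpha;X)=H^{m,p}(\R,w_\alpha;X)$ for integer $m$ since $X$ is UMD and $w_\alpha\in A_p$; and the interpolation identity $[H^{m_0,p}(\R,w_\alpha;X),H^{m_1,p}(\R,w_\alpha;X)]_\theta = H^{s,p}(\R,w_\alpha;X)$, proved exactly as in \cite[Proposition 5.6]{LMV18} with $\R_+$ replaced by $\R$. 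Interpolating $E_k\circ R$ between the endpoints $m=0$ and $m=k$ yields boundedness on $H^{s,p}(\R,w_\alpha;X)$ for all $s\in[0,k]$, and the $C^k$-regularity claim is immediate from the Vandermonde conditions. Part (2) is then a direct consequence of (1): the inclusion $W^{k,p}(I,w_\alpha;X)\hookrightarrow H^{k,p}(I,w_\alpha;X)$ comes from any standard extension, and the converse inclusion uses $E_k$ together with $W^{k,p}(\R,w_\alpha;X)=H^{k,p}(\R,w_\alpha;X)$.

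The main technical obstacle is the weighted estimate at a degenerate endpoint $a=0$, where the reflection maps the weight $t^\alpha$ to $(\lambda_j|t|)^\alpha$ on the other side. The coefficients $c_j$ and nodes $\lambda_j$ are chosen from the algebraic matching condition and so cannot depend on $\alpha$; verifying that the resulting extension still has the correct weighted norm growth is precisely where the dilation-invariance of the $A_p$ class (and the restriction $\alpha\in(-1,p-1)$) is essential. All other aspects of the proof are routine adaptations of \cite[Propositions 5.5 and 5.6]{LMV18}.
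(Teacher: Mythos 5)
Your proposal is correct and follows essentially the same route as the paper, which simply defers to \cite[Propositions 5.5 and 5.6]{LMV18}: a Seeley-type higher-order reflection at the endpoints (with the Vandermonde matching conditions), direct weighted estimates at integer smoothness using the homogeneity of $w_\alpha$ under the reflection, and complex interpolation of $E_k\circ R$ on the whole line to cover non-integer $s$, with (2) and (3) reduced to the whole-line identities. The only cosmetic remark is that in part (2) the inclusion $H^{k,p}(I,w_\alpha;X)\subseteq W^{k,p}(I,w_\alpha;X)$ needs no extension operator at all (restriction of a $W^{k,p}(\R,w_\alpha;X)$ representative suffices), while it is the reverse inclusion that uses $E_k$; this does not affect the validity of the argument.
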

In the case $I = (0,T)$ with $T\in (0,\infty)$ it is possible to construct $E_k$ such that its norm is $T$-independent (see \cite[Lemma 2.5]{MS12}).

The following density lemma will be used several times. Let $I$ denote an interval. We write $C^k_c(\overline{I};X)$ for the space of $X$-valued functions $f:\overline{I}\to X$ such that the derivatives up to order $k$ are continuous and bounded with compact support. Note that $C^k_c(\overline{I};X) = C^k(\overline{I};X)$ if $I$ is bounded.
\begin{lemma}\label{lem:densityinters}
Let $X$ and $Y$ be Banach spaces such that $Y\hookrightarrow X$ densely. Let $k\in \N$, $s\in [-k,k]$, $p\in (1, \infty)$, $\alpha\in (-1,p-1)$. Then $C^k_c(\overline{I})\otimes Y$ is dense in $H^{s}(I,w_{\alpha};X)$ and in $H^{s}(I,w_{\alpha};X)\cap L^p(I;w_{\alpha};Y)$.
\end{lemma}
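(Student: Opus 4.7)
The plan is to extend to $\R$ via the extension operator of Proposition~\ref{prop:extension-interpolation}(1), mollify and truncate there, and finally approximate using the density of $Y$ in $X$. First, I would use $E_k:H^{s,p}(I,w_{\alpha};X)\to H^{s,p}(\R,w_{\alpha};X)$ from Proposition~\ref{prop:extension-interpolation}(1). Since the construction in \cite{LMV18} (even reflection together with smooth cutoffs) is universal in the target, the same operator is also bounded $L^p(I,w_{\alpha};Y)\to L^p(\R,w_{\alpha};Y)$, and therefore preserves the intersection. Restricting a $C^{\infty}_c(\R)\otimes Y$-approximant to $\overline{I}$ produces an element of $C^{\infty}_c(\overline{I})\otimes Y\subseteq C^k_c(\overline{I})\otimes Y$, with convergence inherited through the quotient norm on $H^{s,p}(I,w_{\alpha};X)$ and trivially in $L^p(I,w_{\alpha};Y)$. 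So it suffices to work on $\R$.

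Next, I would combine mollification and truncation on $\R$. Let $\rho_{\varepsilon}\in C^{\infty}_c(\R)$ be a standard mollifier and $\chi_R\in C^{\infty}_c(\R)$ with $\chi_R\equiv 1$ on $[-R,R]$. For $g\in H^{s,p}(\R,w_{\alpha};X)$, the function $\chi_R(\rho_{\varepsilon}\ast g)$ lies in $C^{\infty}_c(\R;X)$. Because $\mathcal{J}_s$ commutes with convolution and $w_{\alpha}\in A_p$, the weighted $L^p$ approximate-identity property gives $\|\rho_{\varepsilon}\ast g-g\|_{H^{s,p}(\R,w_{\alpha};X)}=\|\rho_{\varepsilon}\ast\mathcal{J}_s g-\mathcal{J}_s g\|_{L^p(\R,w_{\alpha};X)}\to 0$. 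Pointwise multiplication by a smooth function with bounded derivatives is bounded on $H^{s,p}(\R,w_{\alpha};X)$ via a standard Mikhlin-type argument in the $A_p$ setting, and $\chi_R\phi\to\phi$ in $H^{s,p}$ as $R\to\infty$. For the intersection the same operations converge in $L^p(\R,w_{\alpha};Y)$ by Lebesgue's theorem, and $\rho_{\varepsilon}\ast g\in C^{\infty}(\R;Y)$ whenever $g\in L^p_{\mathrm{loc}}(\R;Y)$, so the approximants in this case lie in $C^{\infty}_c(\R;Y)$.

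The final step is to approximate $h\in C^{\infty}_c(\R;X)$ (or $C^{\infty}_c(\R;Y)$, respectively) by elements of $C^{\infty}_c(\R)\otimes Y$ in the $C^k$-topology with a common compact support. Such $C^k$-convergence transfers to $L^p(\R,w_{\alpha};Y)$ and to $H^{s,p}(\R,w_{\alpha};X)$ for every $s\in[-k,k]$: for $s\le 0$ via the embedding $L^p(\R,w_{\alpha};X)\hookrightarrow H^{s,p}(\R,w_{\alpha};X)$ (the symbol of $\mathcal{J}_s$ being bounded), and for $s\in[0,k]$ via $H^{k,p}=W^{k,p}$ (Proposition~\ref{prop:extension-interpolation}(2)) combined with complex interpolation (Proposition~\ref{prop:extension-interpolation}(3)). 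Concretely, I would construct the approximants as Riemann sums $\sum_{j}\rho_{\delta}(\cdot-t_j)\,y_j\,|I_j|$ approximating $\rho_{\delta}\ast h$, where $\{I_j\}$ is a sufficiently fine partition of $\supp h$, $t_j\in I_j$, and $y_j\in Y$ is chosen close to $h(t_j)$ using density of $Y$ in $X$ (or close to $h(t_j)\in Y$ itself in the intersection case). The smoothness of $\rho_{\delta}$ yields $C^k$-convergence of the Riemann sums to $\rho_{\delta}\ast h$, while $\rho_{\delta}\ast h\to h$ in $C^k$ as $\delta\to 0$.

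The hard part will be this last step: coordinating a Riemann-sum discretization of a convolution that is valid in the $C^k$-topology with the pointwise $Y$-replacement, so that the final approximant genuinely belongs to the algebraic tensor product $C^{\infty}_c(\R)\otimes Y$ and is close to $h$ in both required norms simultaneously. The preceding extension/mollification/truncation reductions are routine, but this scalar approximation step is where all the data ($s$, $p$, $\alpha$, $X$, $Y$) must be handled at once.
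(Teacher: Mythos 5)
Your argument is correct in outline but takes a considerably longer road than the paper, and one auxiliary step needs a better justification. The paper's proof also reduces to $I=\R$ via the extension operator, but then simply cites \cite[Lemma 3.4]{LMV18} for the density of $C^k_c(\R)\otimes X$ in $H^{s,p}(\R,w_{\alpha};X)$ and passes from $X$ to $Y$ by the one-line observation that $\mathcal{J}_s(\phi\otimes x)=(\mathcal{J}_s\phi)\otimes x$, whence $\|\phi\otimes(x-y)\|_{H^{s,p}(\R,w_{\alpha};X)}=\|\mathcal{J}_s\phi\|_{L^p(\R,w_{\alpha})}\|x-y\|_X$; this makes the Riemann-sum discretization you flag as the hard part unnecessary once a finite-rank approximant is in hand. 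For the intersection the paper is again shorter: after a single mollification the function $\varphi_n * f$ lies in $H^{s,p}(\R,w_{\alpha};Y)$, which embeds into the intersection, so the problem reduces to the first statement with $X$ replaced by $Y$, whereas you carry both norms through the entire extension/mollification/truncation/discretization pipeline. Your route is more self-contained (it essentially reproves the cited density lemma) and the Riemann-sum construction itself is sound, but your truncation step is justified incorrectly: boundedness of multiplication by $\chi_R$ on $H^{s,p}(\R,w_{\alpha};X)$ for $s<0$ is not a ``Mikhlin-type'' statement, since pointwise multipliers are not Fourier multipliers, and for general Banach $X$ it would require a duality or paraproduct argument. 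The elementary fix is to observe that $\rho_{\varepsilon}*g=(\mathcal{J}_{-s}\rho_{\varepsilon})*(\mathcal{J}_s g)$ with $\mathcal{J}_{-s}\rho_{\varepsilon}\in\Sc(\R)$, so that $\rho_{\varepsilon}*g$ and all its derivatives up to order $k$ lie in $L^p(\R,w_{\alpha};X)$ (a Schwartz kernel is dominated by a radially decreasing integrable function, hence convolution with it is bounded on $L^p(w_{\alpha})$ for $w_{\alpha}\in A_p$); then $(1-\chi_R)(\rho_{\varepsilon}*g)\to 0$ in $L^p(\R,w_{\alpha};X)$ and in $W^{k,p}(\R,w_{\alpha};X)$ by dominated convergence, which yields convergence in $H^{s,p}(\R,w_{\alpha};X)$ for all $s\in[-k,k]$ via Proposition \ref{prop:extension-interpolation}(2)--(3) and the $L^1$-bound for the Bessel kernel. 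With that repair, both proofs reach the same conclusion.
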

\begin{proof}
By Proposition \ref{prop:extension-interpolation} it suffices to prove the statements in the case $I  =\R$. The density of $C^k_c(\R)\otimes X$ in $H^{s}(\R,w_{\alpha};X)$ follows from \cite[Lemma 3.4]{LMV18}. Now since $Y$ is densely embedded in $X$ the result follows.

To prove the density in $E:=H^{s}(\R,w_{\alpha};X)\cap L^p(\R;w_{\alpha};Y)$, let $f\in E$. Let $\varphi\in C^\infty_c(\R)$ be such that $\varphi\geq 0$ and $\|\varphi\|_1 = 1$. Let $\varphi_n(x) = n^{-1}\varphi(nx)$. Then $\varphi_n * f\to f$ in $E$. Therefore, it suffices to approximate $g = \varphi_n * f$ for fixed $n$. Since $g\in  H^{s,p}(\R,w_{\alpha};Y)$ and $H^{s,p}(\R,w_{\alpha};Y)\hookrightarrow E$ it suffices to approximate $g$ in $H^{s,p}(\R,w_{\alpha};Y)$. This follows from the first statement of the lemma.
\end{proof}

The following deep result follows from \cite[Proposition 6.6, Theorems 6.7 and 6.8]{LMV18}. The scalar unweighted case is due to \cite{Se}.
\begin{theorem}
\label{th:SobolevLMV}
Let $p\in (1,\infty)$, $\alpha\in (-1,p-1)$ and let $X$ be a UMD space. Then the following holds true:
\begin{enumerate}[{\rm (1)}]
\item If $k\in \N_0$ and $k+\frac{1+\alpha}{p}<s<k+1+\frac{1+\alpha}{p}$, then
\[H^{s,p}_{0}(\R_+,w_{\alpha};X) = \{f\in H^{s,p}(\R_+,w_{\alpha};X): \tr(f) = 0, \ldots, \tr(f^{(k)}) = 0\}.\]
\item
Let $\theta\in (0,1)$ and $s_0,s_1\in \R$, define $s:=s_0(1-\theta)+\theta s_1$. Suppose $s_0,s_1,s\not\in \N_0+(1+\alpha)/p$, then
\begin{equation*}
[H^{s_0,p}_{0}(\R_+,w_{\alpha};X),H^{s_1,p}_{0}(\R_+,w_{\alpha};X)]_{\theta}=H^{s,p}_{0}(\R_+,w_{\alpha};X).
\end{equation*}
\item The realization of $\partial_t$ on $L^p(\R_+,w_{\alpha};X)$ with domain $H^{1,p}_{0}(\R_+,w_{\alpha};X)$ has a bounded $H^{\infty}$-calculus of angle $\pi/2$. In particular, $D((\partial_t)^{s})= H^{s,p}_{0}(\R_+,w_{\alpha};X)$ provided $s\not\in \N_0+(1+\alpha)/p$.
\end{enumerate}
\end{theorem}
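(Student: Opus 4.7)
My plan is to establish the three statements in the order (3), (1), (2), since once the bounded $H^\infty$-calculus of $\partial_t$ is in hand, (1) identifies its fractional domains and (2) becomes automatic from bounded imaginary powers. The unweighted scalar case of (3) is Seeley's classical result; the weighted vector-valued extension is carried out in \cite{LMV18}, and I would follow that blueprint.

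For (3), I would realize $\partial_t$ on $L^p(\R_+,w_\alpha;X)$ with domain $H^{1,p}_0(\R_+,w_\alpha;X)$ as the negative generator of the right-shift semigroup $T(h)f(t)=f(t-h)\one_{\{t\geq h\}}$, which is strongly continuous. For $\Re\lambda>0$ the resolvent is the Volterra operator $(\lambda+\partial_t)^{-1}f(t)=\int_0^t e^{-\lambda(t-s)}f(s)\,ds$, which already shows that $-\partial_t$ is sectorial of angle $\pi/2$. Extending functions by zero to $\R$, $\partial_t$ corresponds to the Fourier multiplier with symbol $i\xi$, and for $f\in H^\infty_0(\Sigma_\varphi)$ with $\varphi>\pi/2$ the operator $f(\partial_t)$ becomes the Mihlin multiplier $\xi\mapsto f(i\xi)$. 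Boundedness on $L^p(\R,w_\alpha;X)$ follows from the operator-valued Mihlin multiplier theorem in the $A_p$-weighted UMD setting (applicable since $w_\alpha\in A_p$ precisely for $\alpha\in(-1,p-1)$), with uniform bound in $\|f\|_{H^\infty(\Sigma_\varphi)}$. A standard convergence lemma (e.g.\ McIntosh approximation) extrapolates from $H^\infty_0$ to $H^\infty$. The angle $\pi/2$ is sharp because $\sigma(\partial_t)\supseteq i\R$.

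For (1), the bounded $H^\infty$-calculus from (3) gives $D((\partial_t)^s)=H^{s,p}_0(\R_+,w_\alpha;X)$ in the sense of the definition, since $C^\infty_c(\R_+;X)$ is a core and the space $H^{s,p}_0$ is defined by its closure in the graph norm, which is equivalent to the $H^{s,p}$-norm by elliptic regularity for $\partial_t$. The substantive content of (1) is then to identify this with the subspace of $H^{s,p}(\R_+,w_\alpha;X)$ consisting of functions whose derivatives up to order $k$ have vanishing trace. The inclusion ``$\subseteq$'' is immediate from continuity of the trace maps on $H^{s,p}$ (valid for $s>k+(1+\alpha)/p$) together with density of $C^\infty_c(\R_+;X)$. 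The reverse direction is the hard part: given $f\in H^{s,p}$ with vanishing traces, one approximates $f$ by $C^\infty_c(\R_+;X)$ via a cut-off $\chi_n(t)=\chi(t/n)$ near the origin combined with mollification, controlling the cut-off error through a weighted Hardy inequality of the type $\|t^{-k}f\|_{L^p(w_\alpha)}\lesssim\|f^{(k)}\|_{L^p(w_\alpha)}$ which becomes applicable precisely because of the vanishing traces.

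For (2), bounded $H^\infty$-calculus implies $\partial_t\in \BIP(L^p(\R_+,w_\alpha;X))$ with $\theta_{\partial_t}\leq \pi/2$, and the complex interpolation of fractional domains of BIP operators (see \cite[Ch.~6]{Haase:2}) yields $[D((\partial_t)^{s_0}),D((\partial_t)^{s_1})]_\theta=D((\partial_t)^{s})$, which combined with (1) gives the interpolation formula under the stated non-critical conditions on $s_0,s_1,s$. I expect the main obstacle to lie in the reverse inclusion of (1): locating the correct weighted Hardy inequality at each critical index, and handling the exclusion set $\N_0+(1+\alpha)/p$ where the trace of the corresponding derivative fails to be continuous, forcing the strict inequalities in the range of $s$.
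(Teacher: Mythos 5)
The paper does not prove this theorem: it is imported wholesale from the cited reference, with the sentence ``The following deep result follows from [LMV18, Proposition 6.6, Theorems 6.7 and 6.8]. The scalar unweighted case is due to [Se].'' So there is no in-paper argument to compare against, and the relevant question is whether your reconstruction is sound. Your architecture --- prove (3) first via causality of the resolvent, zero-extension to $\R$, and the weighted operator-valued Mihlin theorem (using $w_\alpha\in A_p$ iff $\alpha\in(-1,p-1)$); then read off (1) as the identification of $D((\partial_t)^s)$ and (2) from $\BIP$ plus complex interpolation of fractional domains --- is indeed the standard route and essentially the one followed in the cited work.

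That said, two of your steps carry the real mathematical weight and are only gestured at. First, the assertion that $D((\partial_t)^s)=H^{s,p}_0(\R_+,w_\alpha;X)$ ``by elliptic regularity for $\partial_t$'' hides the genuinely hard norm equivalence $\|(\partial_t)^s u\|_{L^p(\R_+,w_\alpha;X)}\eqsim \|u\|_{H^{s,p}(\R_+,w_\alpha;X)}$ on a core; note in particular that $\partial_t$ is \emph{not} invertible on $L^p(\R_+,w_\alpha;X)$ (its formal inverse $f\mapsto\int_0^t f$ maps compactly supported $f$ with $\int f\neq 0$ to a function that is eventually constant, hence not in $L^p(\R_+,w_\alpha;X)$), so the fractional powers and the interpolation identity $[D(B^{s_0}),D(B^{s_1})]_\theta=D(B^s)$ must be handled in the extended (non-invertible, possibly negative-exponent) setting rather than quoted off the shelf. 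Second, the reverse inclusion in (1) --- vanishing traces imply membership in the closure of $C^\infty_c(\R_+;X)$ --- is exactly the weighted Hardy-inequality argument you defer; this, together with the failure of the statement at the excluded indices $s\in\N_0+(1+\alpha)/p$, is the substantive content of the cited theorems. Also, a small caution on (3): sectoriality of angle $\pi/2$ does not come ``already'' from the Volterra form of the resolvent; one still needs the uniform bound $\sup\|\lambda(\lambda+\partial_t)^{-1}\|<\infty$ on the relevant sectors in the \emph{weighted} space, which again uses the $A_p$ property (e.g.\ domination by the Hardy--Littlewood maximal function). In short: your outline is the right one and consistent with how the external reference proceeds, but as a self-contained proof it leaves precisely the difficult steps unproved; for the purposes of this paper the correct move is simply to cite [LMV18], as the authors do.
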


Let $A$ be a sectorial operator on a Banach spaces $X$ and assume $0\in \rho(A)$. As usual, for each $m\in \N$, we denote by $D(A^m)$ the domain of $A^m$ endowed with the norm $\|\cdot\|_{D(A^m)}:=\|A^m\cdot\|_X$. Then for each $\vartheta>0$ and $p\in (1,\infty)$ we define
$$D_A(\vartheta,p):=(X,D(A^m))_{\vartheta/m,p};$$
where $\vartheta<m\in \N$ and $(\cdot,\cdot)_{\vartheta/m,p}$ denotes the real interpolation functor (see e.g. \cite{BeLo,InterpolationLunardi,Tr1}). It follows from reiteration (see \cite[Theorem 1.15.2]{Tr1}) that $D_A(\mu,p)$ does not depend on the choice of $m>\vartheta$, moreover
$$
(X,D_A(\vartheta,p))_{\nu,q}=D_A(\nu\,\vartheta,q),
$$
for all $\nu>0$ and $q\in (1,\infty)$. We refer to \cite[Chapter 1]{Tr1}, \cite[Chapter 1]{InterpolationLunardi} and \cite[Chapter 3]{pruss2016moving} for more on this topic.

The following trace embedding is due to \cite[Theorem 1.1]{MV14} where the result was stated on the
full real line.  The result on $\R_+$ is immediate from the boundedness of the extension operator of Proposition \ref{prop:extension-interpolation} and the density Lemma \ref{lem:densityinters}.
\begin{theorem}
\label{th:traceMV}
Let $A$ be an invertible sectorial operator with dense domain and let $p\in(1,\infty)$, $\alpha\in (-1,p-1)$ and $k>s>(1+\alpha)/p$, where $k\in \N$. Then the trace operator $(\tr f):=f(0)$ initially defined on $C^k_c([0,\infty);D(A^k))$, extends to a bounded linear operator on $H^{s,p}(\R_+,w_{\alpha};X)\cap L^{p}(\R_+,w_{\alpha};D(A^s))$. Moreover,
\begin{equation*}
\tr: H^{s,p}(\R_+,w_{\alpha};X)\cap L^{p}(\R_+,w_{\alpha};D(A^s)) \rightarrow D_{A}(\mu,p),
\end{equation*}
where $\mu :=s-(1+\alpha)/p$.
\end{theorem}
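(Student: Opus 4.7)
The plan is to reduce the statement on $\R_+$ to the known result on $\R$, namely Theorem 1.1 of \cite{MV14}, via the extension operator from Proposition \ref{prop:extension-interpolation}. The proof will have three ingredients: an extension step, the black-box application of the real-line trace theorem, and a density step.

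First, fix $k\in\N$ with $k>s$ as in the hypothesis. Proposition \ref{prop:extension-interpolation}(1) supplies an extension operator $E_k$ that is bounded from $H^{\sigma,p}(\R_+,w_{\alpha};X)$ to $H^{\sigma,p}(\R,w_{\alpha};X)$ for every $\sigma\in[0,k]$. Because this operator is constructed intrinsically on smooth functions, the same $E_k$, now applied with target Banach space $D(A^s)$ and smoothness index zero, is also bounded from $L^p(\R_+,w_{\alpha};D(A^s))$ to $L^p(\R,w_{\alpha};D(A^s))$; moreover $E_k$ preserves $C^k$-regularity up to the boundary. Hence $E_k$ maps the intersection space on $\R_+$ boundedly into the analogous intersection space on $\R$, and satisfies $(E_kf)(0)=f(0)$ whenever $f$ is continuous at $0$.

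Second, for $f\in C^k_c([0,\infty);D(A^k))$ set $\tilde f:=E_kf$ and apply \cite[Theorem 1.1]{MV14} to $\tilde f$ to obtain
\begin{align*}
\|f(0)\|_{D_A(\mu,p)}=\|\tilde f(0)\|_{D_A(\mu,p)}
&\lesssim \|\tilde f\|_{H^{s,p}(\R,w_{\alpha};X)\cap L^p(\R,w_{\alpha};D(A^s))}\\
&\lesssim \|f\|_{H^{s,p}(\R_+,w_{\alpha};X)\cap L^p(\R_+,w_{\alpha};D(A^s))}.
\end{align*}
This says that $\tr$, initially defined on $C^k_c([0,\infty);D(A^k))$, is bounded into $D_A(\mu,p)$ with respect to the intersection norm.

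Third, extend by density. Taking $Y=D(A^s)$ in Lemma \ref{lem:densityinters} (noting that $D(A^s)\hookrightarrow X$ densely, since $A$ has dense domain) shows that $C^k_c(\overline{\R_+})\otimes D(A^s)$ is dense in the intersection space. Since $D(A^k)$ is densely embedded in $D(A^s)$ when $k>s$ (standard for sectorial operators with dense domain), a further termwise approximation upgrades this to the density of $C^k_c(\overline{\R_+})\otimes D(A^k)\subset C^k_c([0,\infty);D(A^k))$, and the bounded extension of $\tr$ then follows.

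The only real technical point in this reduction is verifying that a single extension operator $E_k$ can be used simultaneously on both factors of the intersection; this is guaranteed by the intrinsic, target-independent nature of the Seeley-type extension produced in Proposition \ref{prop:extension-interpolation}. The substance of the theorem, namely the trace embedding on the full line, is imported as a black box from \cite{MV14}.
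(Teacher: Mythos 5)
Your proposal is correct and follows exactly the route the paper itself indicates: the paper disposes of this theorem in one sentence, citing \cite[Theorem 1.1]{MV14} for the full-line case and declaring the half-line version ``immediate'' from the extension operator of Proposition \ref{prop:extension-interpolation} and the density Lemma \ref{lem:densityinters}. You have simply written out that reduction in full, correctly identifying the one point that needs care (that a single Seeley-type extension operator acts boundedly on both factors of the intersection simultaneously).
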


The following proposition, besides its independent interest, will play an important role in the proof of Theorem \ref{th:regularitypath} below. There, for a Banach space $X$ and an interval $I$, $C_0(\overline{I};X)$ denotes the Banach space of all continuous functions on $\overline{I}$ with values in $X$ which vanish at infinity.
\begin{corollary}
\label{cor:trace}
Let $p\in (1,\infty)$, $\alpha\in [0,p-1)$ and let $X$ be a UMD space and define $I:=(0,T)\subseteq \R_+$ where $T\in (0,\infty]$. Let $A$ be an invertible sectorial operator on $X$. Then the following assertions hold:
\begin{enumerate}[{\rm (1)}]
\item[\rm(1)] If $s>(\alpha+1)/p$, then
\begin{equation*}
H^{s,p}(I,w_{\alpha};X)\cap L^p(I,w_{\alpha};D(A^{s}))\hookrightarrow C_0(\overline{I},D_A(\mu,p)),
\end{equation*}
where $\mu:=s-\frac{\alpha+1}{p}$.
\item[\rm(2)] If $s>1/p$ and $\delta>0$, then
\begin{equation*}
H^{s,p}(I_{\delta},w_{\alpha};X)\cap L^p(I_{\delta},w_{\alpha};D(A^{s}))\hookrightarrow  C_0(\overline{I}_{\delta};D_A(s-\tfrac{1}{p},p)),
\end{equation*}
where $I_{\delta}:=(\delta;T)$.
\end{enumerate}
\end{corollary}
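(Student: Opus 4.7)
My plan is to establish the uniform pointwise bound
\[
\sup_{t_0 \in \overline{I}} \|f(t_0)\|_{D_A(\mu, p)} \leq C \|f\|_Z, \qquad Z := H^{s,p}(I, w_\alpha; X) \cap L^p(I, w_\alpha; D(A^s)),
\]
and then upgrade it to the claimed continuous embedding via density. Combined with Lemma \ref{lem:densityinters}, such a bound shows that any approximating sequence in $C^k_c(\overline{I}) \otimes D(A^k) \subset C_0(\overline{I}; D(A^k)) \hookrightarrow C_0(\overline{I}; D_A(\mu, p))$ is uniformly Cauchy, so $f$ itself is continuous on $\overline{I}$ and, since the approximants have compact support, vanishes at infinity. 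For part (1) I first reduce to $I = \R_+$ via the extension operator of Proposition \ref{prop:extension-interpolation}(1).

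The uniform bound at $t_0 = 0$ is precisely Theorem \ref{th:traceMV}. For $t_0 > 0$ I apply the same theorem to $g(t) := f(t+t_0)$, noting that $g(0) = f(t_0)$, and argue that the translation operator $T_{t_0}\colon f \mapsto g$ is a contraction on $Z$ with constant independent of $t_0$. This uses $\alpha \geq 0$ crucially: the substitution $s = t+t_0$ produces the weight $(s-t_0)^\alpha \leq s^\alpha$, which at once yields the contraction property on $L^p(\R_+, w_\alpha; Y)$ for $Y \in \{X, D(A^s)\}$. Applying the same estimate to derivatives gives the bound on each $W^{k,p}(\R_+, w_\alpha; X) = H^{k,p}(\R_+, w_\alpha; X)$ by Proposition \ref{prop:extension-interpolation}(2), and complex interpolation (Proposition \ref{prop:extension-interpolation}(3)) then transfers it to all fractional $H^{s,p}(\R_+, w_\alpha; X)$. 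Feeding $g$ into Theorem \ref{th:traceMV} finally gives $\|f(t_0)\|_{D_A(\mu, p)} = \|g(0)\|_{D_A(\mu, p)} \lesssim \|g\|_Z \lesssim \|f\|_Z$.

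Part (2) is the $\alpha = 0$ case of (1) after a translation. Setting $\tilde f(t) := f(t+\delta)$ on $(0, T-\delta)$ and using $w_\alpha(t) = t^\alpha \geq \delta^\alpha$ on $I_\delta$, one gets the continuous embedding of the weighted intersection space on $I_\delta$ into its unweighted counterpart on $(0, T-\delta)$ with constant $\delta^{-\alpha/p}$; then part (1) applied to $\tilde f$ with $\alpha = 0$ and a translation back produces $f \in C_0(\overline{I_\delta}; D_A(s-\frac{1}{p}, p))$.

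The main technical obstacle is verifying uniform boundedness of $T_{t_0}$ on $H^{s,p}(\R_+, w_\alpha; X)$ for non-integer $s$. Since $w_\alpha$ is not translation invariant, the standard shift-invariance of Bessel potential spaces on $\R$ is unavailable; only the one-sided monotonicity $(s-t_0)^\alpha \leq s^\alpha$—which is precisely what the hypothesis $\alpha \geq 0$ buys—is at our disposal, and it must be fed through complex interpolation between the integer-order endpoints to reach fractional smoothness.
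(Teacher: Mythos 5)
Your proposal is correct and follows essentially the same route as the paper: reduce to $I=\R_+$ by extension, bound $\|f(t_0)\|_{D_A(\mu,p)}$ uniformly in $t_0$ by applying Theorem \ref{th:traceMV} to the translate $f(\cdot+t_0)$, using $\alpha\geq 0$ to get contractivity of the left-translation on the weighted $L^p$ and $W^{k,p}$ scales and complex interpolation (Proposition \ref{prop:extension-interpolation}) to reach fractional $s$, and then conclude by the density Lemma \ref{lem:densityinters}. Part (2) via the lower bound $t^\alpha\geq\delta^\alpha$ on $I_\delta$ and the unweighted case of (1) is also exactly the paper's argument.
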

By similar arguments as in \cite{MV14} using embedding theorems into Triebel--Lizorkin spaces one can avoid the use of the UMD property in the above result. We do not require this generality here and we only proof the special case.
\begin{proof}
By Proposition \ref{prop:extension-interpolation} it suffices to consider $I=\R_+$.

(1): To prove the required embedding by the density Lemma \ref{lem:densityinters} it suffices to check that $\sup_{t\geq 0}\|f(t)\|_{D_A(\mu,p)}\leq C\|f\|_E$ for every $f\in C^k_c(\overline{I};D(A^k))$, here $E:=H^{s,p}(\R_+,w_{\alpha};X)\cap L^p(\R_+,w_{\alpha};D(A^s))$. To prove this we extend a standard translation argument to the weighted setting. Let $(T(t))_{t\geq 0}$ the left-translation semigroup, i.e. $(T(t)f)(s):=f(t+s)$ on $L^p(\R_+;X)$. Since $\alpha\geq 0$, $T(t)$ is contractive on $L^p(\R_+,w_{\alpha};X)$ as well.
Since $T(t)$ commutes with the first derivative $\partial_s$ it is immediate that $(T(t))_{t\geq 0}$ defines a contraction on $W^{k,p}(\R_+,w_{\alpha};X)$. By complex interpolation and Proposition \ref{prop:extension-interpolation} it follows that there exists a constant $M$ such that $\|T(t)\|_{\calL(H^{s,p}(\R_+,w_{\alpha};X))}\leq M$ for $t\in \R_+$, and consequently the same holds on $E$. Now by Theorem \ref{th:traceMV} we obtain
\[\|f(t)\|_{D_A(\mu,p)} = \|(T(t)f)(0)\|_{D_A(\mu,p)}\leq C\|T(t)f\|_{E}\leq CM \|f\|_E.\]
as required.

(2): As before it suffices to estimate $\sup_{t\geq \delta}\|f(t)\|_{D_A(\mu,p)}$. Since $\alpha\geq 0$,
\[H^{s,p}(I_{\delta},w_{\alpha};X)\cap L^p(I_{\delta},w_{\alpha};D(A^{s}))\hookrightarrow H^{s,p}(I_{\delta};X)\cap L^p(I_{\delta};D(A^{s})).\]
Therefore, since (1) extends to any half line $[\delta, \infty)\subseteq [0,\infty)$ the required result follows from (1) in the unweighted case.
\end{proof}

\subsection{Weighted Stochastic Maximal $L^p$-regularity}
\label{ss:weightedSMR}
As before, in this section $X$ is a Banach space with UMD and type $2$.

For $p\in [2, \infty)$ and $\alpha\in \R$ and $T\in (0,\infty]$, let $L^p_{\F}(\O\times(0,T),w_{\alpha};X)$ denotes the closure of the adapted step processes in $L^p(\O; L^p((0,T),w_{\alpha};X)))$.

First we extend Definition \ref{def:RegJ} to the weighted setting:
\begin{definition}
Let $X$ be a UMD space with type 2, let $p\in [2,\infty)$, $w>\omega_0(-A)$,  $T\in (0,\infty]$ and $\alpha \in \R$. We say that $A$ belongs to $\RegJalpha$ if there is a constant $C$ such that for all $G\in L^p_{\F}(\O\times(0,T),w_{\alpha};\g(H,X))$ one has
\[
\|S\diamond G\|_{L^p(\O\times(0,T),w_{\alpha};D((w+A)^{1/2}))}\leq C \|G\|_{L^p_{\F}(\O\times(0,T),w_{\alpha};\g(H,X))}.
\]
\end{definition}

\begin{remark}
Note that for every $G\in L^p_{\F}(\O\times(0,T),w_{\alpha};\g(H,D(A)))$ the stochastic integral $(w+A)^{1/2}S\diamond G$ is well-defined in $X$. Indeed, since $\alpha<\frac{p}{2}-1$  by H\"older's inequality one obtains that for all $T<\infty$
\[L^p(0,T,w_{\alpha};X)\subseteq L^2(0,T;X);\]
and the claim follows as in Remark \ref{r:Stochasticintegralwelldef}.
\end{remark}

The main result of this subsection is a stochastic analogue of \cite[Theorem 2.4]{PruSim04}. \begin{theorem}
\label{th:independenceweight}
Let $X$ be a UMD space with type $2$, let $p\in [2, \infty)$ and $\alpha\in (-1,\frac{p}{2}-1)$. Then the following assertions are equivalent:
\begin{enumerate}[{\rm (1)}]
\item $A\in \RegR$.
\item $A\in \RegRalpha$.
\end{enumerate}
\end{theorem}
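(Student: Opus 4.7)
The plan is to reduce both (1) and (2) to purely deterministic bounds in the spirit of Proposition~\ref{prop:omegaind}. Applying the It\^{o} isomorphism (Theorem~\ref{th:Ito}) pointwise in $t$, integrating against $w_\alpha(t)\,dt$, and invoking density of adapted step processes in $L^p_{\F}(\Omega\times\R_+,w_\alpha;\g(H,X))$ one shows that $A\in\RegRalpha$ is equivalent to
\begin{equation}\label{eq:detweightedplan}
\Bigl(\int_0^\infty \Phi_G(t)^p\, w_\alpha(t)\, dt\Bigr)^{1/p} \leq C\|G\|_{L^p(\R_+,w_\alpha;\g(H,X))}
\qquad \bigl(G\in L^p(\R_+,w_\alpha;\g(H,D(A)))\bigr),
\end{equation}
where $\Phi_G(t):=\|s\mapsto (w+A)^{1/2}S(t-s)G(s)\|_{\g(0,t;H,X)}$. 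The problem thus becomes: show that \eqref{eq:detweightedplan} with a generic $\alpha\in(-1,\tfrac{p}{2}-1)$ is equivalent to its $\alpha=0$ counterpart, which by Proposition~\ref{prop:omegaind} corresponds to $A\in\RegR$.

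For the direction $(1)\Rightarrow(2)$ I would argue by dyadic localization combined with exponential decay. Proposition~\ref{prop:stability} and Theorem~\ref{thm:analytic} give $\omega_0(-A)<0$ and analyticity, so after reducing to $w=0$ one has $\|A^{1/2}S(t)\|_{\calL(X)}\lesssim t^{-1/2}e^{-\delta t}$ for some $\delta>0$. Writing $G=\sum_{k\in\Z} G\one_{I_k}$ with $I_k=[2^k,2^{k+1}]$ and localizing $t$ in $I_j$, the near-diagonal contributions $|j-k|\leq 1$ are handled by the unweighted estimate (Proposition~\ref{prop:omegaind}) applied to each localized piece, while the off-diagonal contributions $k<j-1$ are estimated by factoring $A^{1/2}S(t-s)=S(\tfrac{t-s}{2})(A^{1/2}S(\tfrac{t-s}{2}))$, using the ideal property (Proposition~\ref{prop:idealprop}) to pull $\|S(\tfrac{t-s}{2})\|\lesssim e^{-\delta 2^{j-2}}$ outside the $\g$-norm, and then controlling the remaining factor via type $2$ (Proposition~\ref{prop:inclusiongammatype}). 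Raising to the $p$-th power, summing against the weight $2^{j\alpha}$, and noting that the exponential decay $e^{-cp 2^j}$ absorbs every polynomial factor $2^{|j-k||\alpha|}$ produced by the cross-scale interaction of $w_\alpha$, gives \eqref{eq:detweightedplan}.

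For the converse $(2)\Rightarrow(1)$ the plan is to bootstrap through exponential stability and finite-horizon estimates. First, Lemma~\ref{lem:gammaest} adapts to the weighted setting (its proof only uses the characterization \eqref{eq:detweightedplan}, cutoffs and the left ideal property), yielding $\|s\mapsto(w+A)^{1/2}S(s)x\|_{\g(\R_+,w_\alpha;X)}\lesssim\|x\|$; feeding this into the argument of Proposition~\ref{prop:stability} (via \cite[Theorem 3.2]{HaNeVe}, which extends to weighted $\gamma$-norms) gives $\omega_0(-A)<0$ and analyticity of $S$. Second, for any $\e\in(0,T)$ the weight $w_\alpha$ is comparable to a constant on $[\e,T]$, so applying \eqref{eq:detweightedplan} to processes supported in $[\e,T]$ and combining with the filtration-shift map and Proposition~\ref{prop:trans} gives $A\in\text{\normalfont{SMR}}(p,T-\e)$ with a constant independent of $\e$ after absorbing the boundary contributions via exponential decay. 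Letting $\e\downarrow 0$ and invoking $\alpha<p/2-1$ (which guarantees $L^p(0,T,w_\alpha)\hookrightarrow L^2(0,T)$ by H\"older, so that stochastic convolutions remain controlled near $t=0$) produces $A\in\RegJ$ for every finite $T$. Theorem~\ref{thm:TtoR} then upgrades this to $A\in\RegR$.

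The main obstacle is the $\e\downarrow 0$ limit in the converse direction. The weight $w_\alpha$ degenerates at the origin (blowing up for $\alpha<0$, vanishing for $\alpha>0$), and the stochastic convolution $S\diamond G$ is most sensitive near $t=0$ since $S(0)=I$ provides no smoothing. Making the $\e$-uniform estimate rigorous, in particular controlling the endpoint contribution around $t\approx\e$ independently of $\e$, is where the bulk of the technical work lies; the assumption $\alpha<p/2-1$ is precisely the sharp integrability threshold making the limit admissible.
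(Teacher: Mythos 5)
Your reduction to a deterministic weighted $\gamma$-norm bound (the analogue of Proposition \ref{prop:omegaind}) and your use of analyticity and exponential stability are sound, but both implications as sketched contain a genuine gap, and you locate the role of the hypothesis $\alpha<\frac p2-1$ in the wrong place. In $(1)\Rightarrow(2)$ the far-off-diagonal regime is \emph{not} absorbed by exponential decay: for $t\in I_j=[2^j,2^{j+1}]$ with $j\le 0$ one has $e^{-\delta 2^{j-2}}\eqsim 1$, so the block estimates you describe sum to $\sum_{j\le 0}e^{-\delta p2^{j-2}}\|G\|^p=\infty$ (and for $\alpha>0$ the cross-scale factor $2^{(j-k)\alpha}$ even blows up as $k\to-\infty$). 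What actually controls the contribution of $s<t/2$ is a weighted Hardy inequality: by Proposition \ref{prop:inclusiongammatype} and $\|A^{1/2}S(t-s)\|\lesssim (t-s)^{-1/2}$ that contribution is dominated by $\big(t^{-1}\int_0^{t/2}\|G(s)\|_{\g(H,X)}^2\,ds\big)^{1/2}$, and the averaging operator $g\mapsto t^{-1}\int_0^{t/2}g$ is bounded on $L^{p/2}(\R_+,w_\alpha)$ precisely when $\alpha<\frac p2-1$. So the restriction on $\alpha$ is needed already in the forward direction; it is not merely the threshold for your $\e\downarrow 0$ limit.

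In $(2)\Rightarrow(1)$ the scheme does not close: replacing $w_\alpha$ by a constant on $[\e,T]$ costs a factor $(T/\e)^{|\alpha|/p}$, and exponential decay of $S$ is of no help near $t=0$, which is exactly where the weight degenerates; you flag the $\e$-uniformity as the main obstacle but give no mechanism for it. The paper avoids both problems with one algebraic identity. Setting $\beta=\alpha/p$, $G_\beta(s)=s^\beta G(s)$ and $K(t,s)=A^{1/2}S(t-s)$, one has $t^\beta(K\diamond G)(t)=(K\diamond G_\beta)(t)+(K_\beta\diamond G_\beta)(t)$ with $K_\beta(t,s)=K(t,s)\big((t/s)^\beta-1\big)$, and the operator with kernel $K_\beta$ is bounded on the \emph{unweighted} space unconditionally (Lemma \ref{lem:PruSim}, proved by Young's inequality on the multiplicative group $(\R_+,ds/s)$; the relevant kernel is integrable iff $\beta<\frac12-\frac1p$, i.e.\ $\alpha<\frac p2-1$ --- this is exactly the Hardy-type estimate your sketch is missing). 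Reading the identity in both directions yields Theorem \ref{thm:indweight} and hence the equivalence, with analyticity (obtained from either hypothesis via Lemma \ref{lem:gammaest} and Proposition \ref{prop:analyticgamma}, much as you propose) supplying the kernel bound $\|K(t,s)\|\lesssim(t-s)^{-1/2}$.
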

As a consequence $\RegRalpha = \RegR$ for all $\alpha\in (-1,\frac{p}{2}-1)$.

To prove the result we will prove the following more general result, which can be viewed as a stochastic operator-valued analogue of \cite{stein1957note}.
\begin{theorem}\label{thm:indweight}
Let $p\in [2, \infty)$, $\alpha\in (-\infty,\frac{p}{2}-1)$ and let $X$ be a Banach space and let $Y$ be a UMD Banach space with type $2$. Let $X_0$ be a Banach space which densely embeds into $X$. Let $\Delta = \{(t,s): 0<s<t<\infty\}$ and let $K\in C(\Delta;\calL(X,Y))$ be such that $\|K(t,s)\|\leq M/(t-s)^{1/2}$ and $\|K(t,s)x\|\leq M\|x\|_{X_0}$ for all $t>s>0$. For adapted step processes $G$ let $T_K G$ be defined by
\[T_K G(t)  = K\diamond G(t) = \int_0^t K(t,s) G(s) \, dW_H(s),  \ \  \ t\in \R_+.\]
Let $p\in [2, \infty)$ and $\alpha\in (-\infty,\frac{p}{2}-1)$. The following assertions are equivalent:
\begin{enumerate}[{\rm (1)}]
\item $T_K$ is bounded from $L^p_{\F}(\O\times\R_+,w_{\alpha};\g(H,X))$ into $L^p(\O\times\R_+,w_{\alpha};Y)$.
\item $T_K$ is bounded from $L^p_{\F}(\O \times\R_+;\g(H,X))$ into $L^p(\O \times\R_+;Y)$.
\end{enumerate}
\end{theorem}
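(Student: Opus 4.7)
The plan is to prove both directions by the same near/far dyadic decomposition, handling the near piece by the hypothesized boundedness of $T_K$ and the far piece by the kernel decay together with a classical Hardy inequality.

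Partition $\R_+=\bigcup_{n\in\Z}I_n$ with $I_n:=[2^n,2^{n+1})$, and for $t\in I_n$ write
$$T_K G(t)=T_K(G\one_{J_n^-})(t)+T_K(G\one_{J_n^+})(t),$$
with $J_n^+:=[2^{n-1},2^{n+1})$ and $J_n^-:=[0,2^{n-1})$. The power weight $w_\alpha(t)=t^\alpha$ is comparable to $2^{n\alpha}$ uniformly on $J_n^+\cup I_n$; hence, after restricting the output to $I_n$, the weighted and unweighted $L^p$-norms of $G\one_{J_n^+}$ and of $T_K(G\one_{J_n^+})\one_{I_n}$ differ by the same factor $\sim 2^{n\alpha}$. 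Applying the hypothesized bound (weighted or unweighted, whichever the direction calls for) to $G\one_{J_n^+}$ on all of $\R_+$ therefore produces a near-diagonal estimate; summing in $n$ and exploiting the bounded overlap of the $J_n^+$ (each $s$ lies in at most two of them) gives the correct control of the near piece in terms of the right-hand side of the inequality being proved.

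For the far piece, $s\in J_n^-$ implies $t-s>t/2$, so $\|K(t,s)\|_{\calL(X,Y)}\le \sqrt{2}\,M t^{-1/2}$. Combining Corollary \ref{cor:Ito} with the ideal property (Proposition \ref{prop:idealprop}) and Minkowski's inequality (valid since $p\ge 2$) yields
$$\|T_K(G\one_{J_n^-})(t)\|^{2}_{L^p(\O;Y)}\;\lesssim_{p,Y}\; M^{2}\,t^{-1}\int_{0}^{t/2}\|G(s)\|^{2}_{L^p(\O;\g(H,X))}\,ds.$$
Raising to the $p/2$-th power and integrating against $w_\alpha(t)\,dt$ (resp.\ $dt$) reduces the far estimate to the classical weighted Hardy inequality
$$\int_{0}^{\infty}t^{\alpha-p/2}\Big(\int_{0}^{t}f(s)\,ds\Big)^{p/2}dt\;\lesssim\; \int_{0}^{\infty}f(s)^{p/2}s^{\alpha}\,ds,$$
applied to $f(s):=\|G(s)\|^{2}_{L^p(\O;\g(H,X))}$. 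This inequality holds precisely when $\alpha<p/2-1$, which is exactly the standing hypothesis.

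The main technical burden will be the near-diagonal bookkeeping: applying the hypothesis to $G\one_{J_n^+}$ on all of $\R_+$, discarding the output outside $I_n$, and summing in $n$ while tracking constants cleanly. Adaptedness is preserved since the truncations are deterministic, and the density of adapted step processes reduces the proof to that case. The borderline case $p=2$, in which the Hardy inequality saturates at $a=q-1$, will be the most delicate point: there a direct Fubini computation handles the far piece whenever $\alpha<0$, while the reverse implication at $p=2$ may require a separate argument or a limiting procedure from $p>2$.
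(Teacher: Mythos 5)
Your dyadic near/far decomposition is a genuinely different route from the paper's, which instead adapts Stein's classical interpolation trick: with $\beta=\alpha/p$ and $G_\beta(s)=s^\beta G(s)$ one has the identity $t^\beta T_KG(t)=T_KG_\beta(t)+T_{K,\beta}G_\beta(t)$, where $T_{K,\beta}$ has kernel $K(t,s)\bigl((t/s)^\beta-1\bigr)$, and Lemma \ref{lem:PruSim} shows that $T_{K,\beta}$ is \emph{unconditionally} bounded on unweighted $L^p$ exactly when $\beta<\tfrac12-\tfrac1p$, i.e.\ $\alpha<\tfrac p2-1$, via a multiplicative convolution estimate. For $p>2$ your argument is correct and arguably more elementary: the near piece is handled by weight comparability on $J_n^+\cup I_n$ together with the bounded overlap of the $J_n^+$, and the far piece by Corollary \ref{cor:Ito}, the ideal property, Minkowski's inequality and Hardy's inequality with exponent $q=p/2>1$. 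Note that for $p>2$ your far piece is controlled unconditionally, without invoking the hypothesis at all, which mirrors the role of the paper's correction term $T_{K,\beta}$.

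There is, however, a genuine gap at $p=2$, a case the theorem explicitly covers. In the direction (1)$\Rightarrow$(2) your far-piece estimate reduces to
\[
\int_0^\infty t^{-1}\Bigl(\int_0^{t/2}g(s)\,ds\Bigr)dt\;\lesssim\;\int_0^\infty g(s)\,ds,
\]
which is false: by Fubini the left-hand side equals $\int_0^\infty g(s)\int_{2s}^\infty t^{-1}\,dt\,ds=\infty$ for every nonzero $g\geq 0$. This is not a removable artifact of your splitting: at $p=2$ the size bound $\|K(t,s)\|\leq M(t-s)^{-1/2}$ alone does not make the far interactions $L^2$-bounded (already for the scalar kernel $(t-s)^{-1/2}$ the It\^o isometry gives exactly the divergent integral above), so the far part would have to be fed back through the hypothesized weighted bound — but for $s\ll t$ the weights $s^\alpha$ and $t^\alpha$ are incomparable, so that transfer fails as well. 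Your suggested repairs do not close this: a limiting procedure from $p>2$ is unavailable because the hypothesis is for a single fixed $p$, and the Fubini computation you mention only covers the opposite direction, where the target norm carries the weight $t^\alpha$ with $\alpha<0$. The paper's identity sidesteps the issue because the correction kernel $K(t,s)\bigl((t/s)^\beta-1\bigr)$ vanishes on the diagonal and the relevant integral $\int_1^\infty\frac{(x^\beta-1)^2}{x-1}\,x^{2/p}\,\frac{dx}{x}$ still converges at $p=2$ precisely for $\beta=\alpha/2<0$. To complete your proof you would need either to adopt that identity or to supply a separate argument for $p=2$ in the direction (1)$\Rightarrow$(2).
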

As a consequence the boundedness of $T_K$ does not depend on $\alpha\in (-\infty,\frac{p}{2}-1)$.

To prove the theorem we prove a stochastic version of a standard lemma (see \cite{stein1957note}, \cite{Kree} and \cite[Proposition 2.3]{PruSim04}).
\begin{lemma}\label{lem:PruSim}
Let $X$ be a Banach space and let $Y$ be a UMD Banach space with type $2$. Let $p\in [2, \infty)$ and $\beta\in (-\infty,\frac{1}{2}-\frac{1}{p})$. Let $\Delta = \{(t,s): 0<s<t<\infty\}$. Let $K$ be as in Theorem \ref{th:independenceweight}. Then the operator $T_{K,\beta}:L^p_{\F}(\O\times\R_+;\g(H,X))\to L^p(\O\times\R_+;Y)$ defined by
\[T_{K,\beta} G(t)  = \int_0^t K(t,s) ( (t/s)^{\beta} -1) G(s) \, dW_H(s)\]
is bounded and satisfies $\|T_{K,\beta}\|\leq C_{p,Y} C_{\beta} M$.
\end{lemma}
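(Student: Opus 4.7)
The plan is to reduce to a scalar Mellin-type inequality via the Itô isomorphism in UMD spaces of type $2$, and then apply a scale-invariant Minkowski argument.

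Let $G$ be an adapted step process. By the Itô isomorphism (Theorem \ref{th:Ito}) applied on $[0,t]$ and followed by integration in $t$ over $\R_+$, one has
\begin{align*}
\E\int_0^\infty \|T_{K,\beta} G(t)\|_Y^p \, dt
\eqsim_{p,Y} \int_0^\infty \E \|s\mapsto K(t,s)((t/s)^\beta-1) G(s)\|_{\gamma(0,t;H,Y)}^p \, dt.
\end{align*}
Since $Y$ has type $2$, Proposition \ref{prop:inclusiongammatype} yields $L^2(0,t;\gamma(H,Y))\hookrightarrow \gamma(0,t;H,Y)$ with constant depending only on $Y$, and then by the ideal property of $\gamma$-norms together with the pointwise estimate $\|K(t,s)\|\leq M(t-s)^{-1/2}$,
\[
\|s\mapsto K(t,s)((t/s)^\beta-1)G(s)\|_{\gamma(0,t;H,Y)}^2
\lesssim_Y M^2 \int_0^t \frac{|(t/s)^\beta-1|^2}{t-s}\,\|G(s)\|_{\gamma(H,X)}^2 \, ds.
\]

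Setting $h(s):=\|G(s)\|_{\gamma(H,X)}^2$, the task reduces to the deterministic inequality
\[
\int_0^\infty (Lh(t))^{p/2}\,dt \;\leq\; C_\beta^{p/2}\,\|h\|_{L^{p/2}(\R_+)}^{p/2},
\qquad Lh(t):=\int_0^t \frac{|(t/s)^\beta-1|^2}{t-s}\,h(s)\,ds.
\]
The substitution $u=s/t$ rewrites $L$ in the scale-invariant form
\[
Lh(t)=\int_0^1 \frac{|u^{-\beta}-1|^2}{1-u}\,h(tu)\,du,
\]
and because $p/2\geq 1$, Minkowski's integral inequality plus the change of variables $r=tu$ gives
\[
\Big(\int_0^\infty (Lh(t))^{p/2}dt\Big)^{2/p}
\leq \int_0^1 \frac{|u^{-\beta}-1|^2}{1-u}\,u^{-2/p}\,du \cdot \|h\|_{L^{p/2}(\R_+)}.
\]

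The remaining point is to check that $C_\beta:=\int_0^1 \frac{|u^{-\beta}-1|^2 u^{-2/p}}{1-u}\,du<\infty$: near $u=1$ the numerator behaves like $(1-u)^2$ so the integrand is integrable, while near $u=0$ the integrand behaves like $u^{-2\beta-2/p}$, which is integrable precisely when $\beta<\tfrac12-\tfrac1p$, which is exactly our hypothesis. Collecting constants, $\|T_{K,\beta}G\|_{L^p(\Omega\times\R_+;Y)}\leq C_{p,Y}\,C_\beta^{1/2}\,M\,\|G\|_{L^p(\Omega\times\R_+;\gamma(H,X))}$, and density of adapted step processes finishes the proof. The main obstacle is essentially bookkeeping: tracking that the stochastic Itô square-function estimate combined with type $2$ lands one in a pointwise $L^2$-in-time kernel inequality whose scale-invariance makes the sharp threshold $\beta<\tfrac12-\tfrac1p$ visible.
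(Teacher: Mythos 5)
Your proof is correct and follows essentially the same route as the paper's: both pass through the It\^o/type-$2$ square-function bound to the deterministic kernel inequality for $k_\beta(t,s)=((t/s)^\beta-1)^2/(t-s)$, and your substitution $u=s/t$ followed by Minkowski's integral inequality is exactly the paper's Young's inequality on the multiplicative group $(\R_+,\tfrac{ds}{s})$ in disguise, yielding the identical constant $C_\beta=\int_0^1(u^{-\beta}-1)^2(1-u)^{-1}u^{-2/p}\,du$. The only (shared) imprecision is in the endpoint analysis near $u=0$: for $\beta\le 0$ the integrand behaves like $u^{-2/p}$ rather than $u^{-2\beta-2/p}$, so finiteness there actually requires $p>2$; this caveat is equally present in the paper's version of the same integral.
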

\begin{proof}
By density it suffices to bound $T_{K,\beta} G$ for adapted step processes $G$.
Note that for all $t>s>0$ one has
\[\|K(t,s) ( (t/s)^{\beta} -1)\|^2 \leq M^2 k_{\beta}(t,s),\]
where $k_{\beta}:\{(s,t)\in (0,\infty)^2: s<t\}\to \R_+$ is given by
$k_{\beta}(t,s) =( (t/s)^{\beta} -1)^2/(t-s)$.

By Corollary \ref{cor:Ito} we have
\begin{align*}
\E\|T_{K,\beta} G(t)\|^p& \leq C_{p,Y}^p\E \Big(\int_0^t \|K(t,s) ( (t/s)^{\beta} -1)\|^2 \|G(s)\|^2_{\g(H,X)} \, ds\Big)^{p/2}
\\ & \leq C_{p,Y}^p M^p \E\Big(\int_0^t k_{\beta}(t,s) \|G(s)\|^2_{\g(H,X)} \, ds\Big)^{p/2},
\end{align*}
To conclude, it suffices to prove that
\[
\int_{\R_+} \Big(\int_0^t k_{\beta}(t,s) |f(s)|^2\, ds\Big)^{p/2} \, dt \leq C_{\beta}^p \|f\|_{L^p(\R_+)}^p\,,
\]
for any $f\in L^p(\R_+)$. Let us set $g(s) = |f(s) s^{1/p}|^2$ for $s>0$, then
\[\int_0^t k_{\beta}(t,s) |f(s)|^2\, ds= \frac{1}{t^{2/p}} \int_0^\infty h_{\beta}(t/s) g(s) \, \frac{ds}{s} = \frac{h_{\beta}*g(t)}{t^{2/p}},\]
where the convolution is in the multiplicative group $(*,\R_+\setminus\{0\})$ with Haar measure $d\mu(s) = \frac{ds}{s}$ and $h_{\beta}(x):= \one_{(1, \infty)}(x) \frac{(x^\beta-1)^2}{x-1} x^{2/p}$ for $x>0$. Taking $\frac{p}{2}$-powers and integrating over $t\in \R_+$ and applying Young's inequality yields
\begin{align*}
\int_{\R_+} \Big(\int_0^t k_{\beta}(t,s) |f(s)|^2\, ds\Big)^{p/2} \, dt   = \|h_{\beta} * g\|^{p/2}_{L^{p/2}(\R_+,\mu)} &\leq \|h_{\beta}\|^{p/2}_{L^1(\R_+,\mu)} \|g\|^{p/2}_{L^{p/2}(\R_+,\mu)}
\\ & = \|h_{\beta}\|^{p/2}_{L^1(\R_+,\mu)}  \|f\|_{L^p(\R_+)}^p.
\end{align*}
Finally, one easily checks that
\[\|h_{\beta}\|_{L^1(\R_+,\mu)} = \int_1^\infty \frac{(x^\beta-1)^2}{x-1} x^{2/p} \, \frac{dx}{x}\]
is finite if and only if $\beta<\frac{1}{2}-\frac{1}{p}$. This concludes the proof.
\end{proof}

\begin{proof}[Proof of Theorem \ref{thm:indweight}]
By density it suffices to prove uniform estimates for $T_K G$ where $G$ is a $X_0$-valued adapted step process.

$(1)\Rightarrow (2)$: \ Set $G_{\beta}(s):= s^{\beta} G(s)$ where $\beta=\alpha/p$. Observe that
\begin{equation}
\label{eq:weightequivalence1}
t^{\beta} T_K G(t) =  T_K G_{\beta}(t) + T_{K,\beta} G_{\beta}(t),
\end{equation}
where $T_{K,\beta}$ is as in Lemma \ref{lem:PruSim}. By (1) one has
\[\|T_KG_{\beta}\|_{L^p(\O\times\R_+;Y)}\leq C\|G_{\beta}\|_{L^p(\O\times\R_+;\g(H,X))}
=C\|G\|_{L^p(\O\times\R_+,w_{\alpha};\g(H,X))}.\]
Moreover, by Lemma \ref{lem:PruSim} one has
\[\|T_{K,\beta} G_{\beta}\|_{L^p(\O\times\R_+;Y)}
\leq C\|G_{\beta}\|_{L^p(\O\times\R_+;\g(H,X))} =C\|G\|_{L^p(\O\times\R_+,w_{\alpha};\g(H,X))}.\]
Then by \eqref{eq:weightequivalence1} and the previous estimates,
\begin{align*}
\|T_K G\|_{L^p(\O\times\R_+,w_{\alpha};Y)}&=\|t\mapsto t^{\beta}T_K G(t)\|_{L^p(\O\times\R_+;Y)}\\
&\leq \|T_KG_{\beta}\|_{L^p(\O\times\R_+;Y)}+\|T_{K,\beta} G_{\beta}\|_{L^p(\O\times\R_+;Y)}\\
&\leq 2C \|G\|_{L^p(\O\times\R_+,w_{\alpha};\g(H,X))}.
\end{align*}

$(2)\Rightarrow (1)$: \ Let $F_{-\beta}(s) = s^{-\beta} G(s)$ where $\beta=\alpha/p$. Similarly to \eqref{eq:weightequivalence1}, one has
\[T_K F(t) = t^{\beta} T_K F_{-\beta}(t) - T_{K,\beta} F(t).\]
As before, applying the assumption to $F_{-\beta}$ and Lemma \ref{lem:PruSim} gives that
\begin{align*}
\|T_K F\|_{L^p(\O\times\R_+;Y)}& \leq \|t\mapsto t^{\beta} T_K F_{-\beta}(t)\|_{L^p(\O\times\R_+;Y)} + \|T_{K,\beta}  F\|_{L^p(\O\times\R_+;Y)}\\
& =  \| T_K F_{-\beta}\|_{L^p(\O\times\R_+,w_{\alpha};Y)} + \|T_{K,\alpha}  F\|_{L^p(\O\times\R_+;Y)}
\\ & \leq C \|F_{-\beta}\|_{L^p(\O\times\R_+,w_{\alpha};\g(H,X))} + C''\|F\|_{L^p(\O\times\R_+;\g(H,X))}
\\ & = (C + C'')\|F\|_{L^p(\O\times\R_+;\g(H,X))},
\end{align*}
from which the result follows.
\end{proof}

\begin{proof}[Proof of Theorem \ref{th:independenceweight}]
If (1) holds, then by Theorem \ref{thm:analytic} the semigroup $S$ generated by $A$ is analytic.
To see that (2) also implies analyticity of $S$, note that the statement of Lemma \ref{lem:gammaest} still holds if instead we assume $A\in \RegRalpha$. To see this one can repeat the argument given there by using $\alpha>-1$. Therefore, if (2) holds, then Proposition \ref{prop:analyticgamma} implies that $S$ is analytic.

By the analyticity of $S$, the operator-valued family $K:\Delta \to \calL(X)$ defined by
\[K(t,s) := A^{\frac12} S(t-s)\]
satisfies $\|K(t,s)\|\leq C/(t-s)^{1/2}$ for $t>s>0$. Therefore, the equivalence of (1) and (2) follows from Theorem \ref{thm:indweight} with $X_0 = D(A)$.
\end{proof}

\subsection{Space-time regularity results}

To state the last results of this section, we introduce a further class of operators. From now on we will assume $(S(t))_{t\geq 0}$ is exponentially stable. For $\theta\in [0,1/2)$ we set
\begin{equation*}
S_{\theta}(t):=\frac{t^{-\theta}}{\Gamma(1-\theta)}S(t)\,, \qquad t \geq 0.
\end{equation*}

\begin{definition}
Let $X$ be a UMD space with type $2$, let $p\in [2, \infty)$, and $\theta\in [0,1/2)$ and assume $\omega_0(-A)<0$. We say that operator $A$ belongs to $\RegthetaR$ if for each $G\in L^p_{\F}(\O\times \R_+;\g(H,X))$ the stochastic convolution process
$$S_{\theta}\diamond G(t):=\int_0^t S_{\theta}(t-s) G(s)\,dW_H(s)\,,$$
 is well-defined in $X$, takes values in $D(A^{1/2-\theta})$ $\P\times dt$-a.e. and satisfies
\begin{equation*}
\|S_\theta\diamond G\|_{L^p(\O \times\R_+;D(A^{\frac12-\theta} ))}\leq C \|G\|_{L^p(\O \times\R_+;\g(H,X))}.
\end{equation*}
for some $C>0$ independent of $G$.
\end{definition}
By definition, we have $\normalfont{\text{SMR}}_0(p,\infty)=\RegR$.

The following important remark gives sufficient conditions for $A\in \RegthetaR$ which reduces to Theorem \ref{thm:SMRmain} if $\theta=0$.
\begin{remark}
\label{r:SMRthetafromHinfinite}
It was shown in \cite{MaximalLpregularity,NVW11,NVW13} that, if $X$ satisfies Assumption \ref{ass:JRbdd}, $0\in \rho(A)$ and $A$ has a bounded $H^{\infty}$-calculus of angle $<\pi/2$ then $A\in \RegthetaR$ for any $\theta\in [0,1/2)$ and $p\in (2,\infty)$. In addition, if $q=2$, then $A\in \RegthetaR$ for any $p\in [2,\infty)$. Lastly, the assumption $0\in \rho(A)$ can be avoided using a homogeneous version of $\RegthetaR$ (see \cite[Theorem 4.3]{MaximalLpregularity}).
\end{remark}
Before going further, we make the following observation:
\begin{proposition}
\label{prop:psi}
Let $X$ be a UMD space with type $2$ and let $p\in [2, \infty)$.
Let $A\in \RegthetaR$ be such that $\omega_0(-A)<0$ and $A$ is an $R$-sectorial operator of angle $\omega_R(A)<\pi/2$. Then, for any $0\leq \psi<\theta< 1/2$, we have $A\in\RegpsiR$.
\end{proposition}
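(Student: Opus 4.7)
Setting $\alpha := \theta-\psi \in (0,\tfrac{1}{2})$, my approach is to relate $S_\psi\diamond G$ to $S_\theta\diamond G$ by a deterministic convolution and then use $R$-sectoriality to control the resulting operator-valued Fourier multiplier. The starting point is the kernel identity
\[
S_\psi(t) = \int_0^t \frac{(t-u)^{\alpha-1}}{\Gamma(\alpha)}\, S(t-u)\, S_\theta(u)\,du,
\]
which follows from $S(t-u)S(u)=S(t)$ together with the Beta-function evaluation $\int_0^t (t-u)^{\alpha-1} u^{-\theta}\,du = \frac{\Gamma(\alpha)\Gamma(1-\theta)}{\Gamma(1-\psi)}\,t^{-\psi}$. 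Substituting into $(S_\psi\diamond G)(t)$ for an adapted step process $G$ with values in $D(A^k)$ (with $k$ large enough that all manipulations are legitimate) and applying stochastic Fubini after the substitution $v=s+u$ yields
\[
(S_\psi\diamond G)(t) = \int_0^t \frac{(t-v)^{\alpha-1}}{\Gamma(\alpha)}\, S(t-v)\,(S_\theta\diamond G)(v)\,dv.
\]

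Splitting $A^{1/2-\psi} = A^\alpha A^{1/2-\theta}$ and pulling $A^{1/2-\theta}$ inside the (now deterministic) Bochner integral gives
\[
A^{1/2-\psi}(S_\psi\diamond G)(t) = \int_0^t \frac{(t-v)^{\alpha-1}}{\Gamma(\alpha)}\, A^\alpha S(t-v)\, H(v)\,dv,
\]
where $H(v) := A^{1/2-\theta}(S_\theta\diamond G)(v)$. Since $A\in\RegthetaR$, one has $\|H\|_{L^p(\O\times\R_+;X)} \lesssim \|G\|_{L^p(\O\times\R_+;\gamma(H,X))}$, so by Fubini in $\O$ the claim reduces to showing that the deterministic convolution operator
\[
\mathcal{I}\colon h \mapsto \Big(t \mapsto \int_0^t \frac{(t-v)^{\alpha-1}}{\Gamma(\alpha)}\, A^\alpha S(t-v)\, h(v)\,dv\Big)
\]
is bounded on $L^p(\R_+;X)$.

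For the $L^p$-boundedness of $\mathcal{I}$, exponential stability of $S$ permits computing the Fourier transform of the extended-by-zero kernel $K(v):=\one_{(0,\infty)}(v)\frac{v^{\alpha-1}}{\Gamma(\alpha)} A^\alpha S(v)$ as
\[
\widehat{K}(\tau) = A^\alpha(i\tau+A)^{-\alpha} = [A(i\tau+A)^{-1}]^{\alpha}, \qquad \tau \in \R\setminus\{0\}.
\]
Because $\omega_R(A)<\pi/2$, the family $\{A(i\tau+A)^{-1}:\tau\neq 0\}$ is $R$-bounded on $X$, and by the Kalton--Weis fractional-power theorem so is $\{\widehat{K}(\tau):\tau\neq 0\}$. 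Its logarithmic derivative $\tau\widehat{K}'(\tau) = -\alpha\,[i\tau(i\tau+A)^{-1}]\,\widehat{K}(\tau)$ is $R$-bounded as a product of two $R$-bounded families, so the operator-valued Mikhlin multiplier theorem of Weis in the UMD space $X$ delivers the boundedness of $\mathcal{I}$ on $L^p(\R;X)$; since $K$ is supported on $\R_+$ this restricts to $L^p(\R_+;X)$, giving $A\in\RegpsiR$.

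The principal obstacle is this last multiplier step: it is here that the $R$-sectoriality hypothesis $\omega_R(A)<\pi/2$ is essentially used, since this is precisely what allows evaluating the resolvent on the imaginary axis and what furnishes the $R$-bounds of the unperturbed family $\{A(i\tau+A)^{-1}\}$ from which Kalton--Weis extracts the $\alpha$-th power. As a slightly cleaner alternative, one may identify $\mathcal{I}$ with $A^\alpha(\partial_t+A)^{-\alpha}$ on $L^p(\R_+;X)$ and invoke the Kalton--Weis sum theorem together with the bounded $H^\infty$-calculus of $\partial_t$ of angle $\pi/2$ on $L^p(\R_+;X)$ given by Theorem~\ref{th:SobolevLMV}(3), to produce a joint functional calculus of $\partial_t$ and $A$ in which the boundedness of $A^\alpha(\partial_t+A)^{-\alpha}$ is immediate.
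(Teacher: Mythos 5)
Your argument is essentially correct but follows a genuinely different route from the paper. The paper stays at the level of the deterministic characterization (the analogue of Proposition \ref{prop:omegaind}): for each fixed $t$ it writes $A^{1/2-\psi}S_{\psi}(t-s)=c\,\big((t-s)A\big)^{\theta-\psi}S\big(\tfrac{t-s}{2}\big)\cdot A^{1/2-\theta}S_{\theta}\big(\tfrac{t-s}{2}\big)$, observes via \cite[Lemma 3.3]{KKW} (or \cite[Proposition 10.3.2]{Analysis2}) that $\{(sA)^{\theta-\psi}S(s/2):s>0\}$ is $R$-bounded, and applies the $\gamma$-multiplier theorem pointwise in $t$ before integrating with an elementary substitution. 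You instead exploit the Beta-function factorization $S_\psi\diamond G=\mathscr{C}^{-\alpha}(S_\theta\diamond G)$ (the Da Prato--Kwapie\'n--Zabczyk identity that the paper itself uses only later, in the proof of Theorem \ref{th:regularitypath}) and reduce everything to the $L^p(\R_+;X)$-boundedness of the deterministic operator $\mathscr{A}^{\alpha}\mathscr{C}^{-\alpha}$, proved by Weis' operator-valued Mikhlin theorem. Your route is longer (it needs stochastic Fubini, the representation $\int_0^\infty \frac{v^{\alpha-1}}{\Gamma(\alpha)}e^{-i\tau v}S(v)\,dv=(i\tau+A)^{-\alpha}$, and the UMD multiplier theorem, whereas the $\gamma$-multiplier theorem needs none of this), but it isolates a clean deterministic statement and makes the link with the factorization method explicit.

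Two points deserve correction. First, there is no ``Kalton--Weis fractional-power theorem'' asserting that $R$-boundedness of a family $\{T_\tau\}$ passes to $\{T_\tau^{\alpha}\}$; as stated this is not a theorem. The family $\{A^{\alpha}(i\tau+A)^{-\alpha}:\tau\neq 0\}$ \emph{is} $R$-bounded under $\omega_R(A)<\pi/2$, but the correct justification is to write it as $\{g_{\pm}(A/|\tau|)\}$ with $g_{\pm}(z)=z^{\alpha}(\pm i+z)^{-\alpha}$, decompose $g_{\pm}=h_{\pm}+\tfrac{z}{1+z}$ with $h_{\pm}\in H^{\infty}_0(\Sigma_\sigma)$ for $\omega_R(A)<\sigma<\pi/2$, and invoke \cite[Proposition 10.3.2]{Analysis2} together with $R$-sectoriality for the rational part --- i.e.\ exactly the tool the paper uses. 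Second, your ``cleaner alternative'' via $D(\mathscr{C}^{\alpha})\subseteq D(\mathscr{A}^{\alpha})$ quietly requires identifying $D(\mathscr{C}^{\alpha})$ by complex interpolation, which needs $\BIP$ of $\mathscr{A}$ (hence of $A$); the proposition only assumes $R$-sectoriality, so that variant proves less than claimed. With the first point repaired and the alternative dropped, the proof is sound.
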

\begin{proof}
First observe that an analogue of Proposition \ref{prop:omegaind} for $\RegthetaR$  holds and we will use it in the proof below.
By \cite[Lemma 3.3]{KKW} (or \cite[Proposition 10.3.2]{Analysis2}) the set $\{(sA)^{\theta-\psi}S(s/2)\,:\,s>0\}$ is $R$-bounded and hence $\g$-bounded
(see \cite[Theorem 8.1.3(2)]{Analysis2}). Therefore, by the $\g$-multiplier theorem (see \cite[Theorem 9.5.1]{Analysis2}) we obtain
\[\|s\mapsto A^{1/2-\psi}S_{\psi}(t-s)G(s)\|_{\g(0,t;H,X)}\\
\leq C \|s\mapsto A^{1/2-\theta}S_{\theta}((t-s)/2)G(s)\|_{\g(0,t;H,X)}.
\]
Taking $L^p$-norms on both sides we find that
\begin{align*}
\int_{0}^{\infty} \|s\mapsto A^{1/2-\psi} & S_{\psi}(t-s)G(s)\|_{\g(0,t;H,X)}^p\, dt
\\ & \leq C^p\int_{0}^{\infty} \|s\mapsto A^{1/2-\theta}S_{\theta}((t-s)/2)G(s)\|_{\g(0,t;H,X)}^p dt
\\ & =  \frac{C^p}{2} \int_{0}^{\infty} \|s\mapsto A^{1/2-\theta}S_{\theta}((2\tau-s)/2)G(s)\|_{\g(0,2\tau;H,X)}^p d\tau
\\ & \leq 2^{\frac{p}{2}-1} C^p \int_{0}^{\infty}  \|\sigma\mapsto A^{1/2-\theta}S_{\theta}(\tau-\sigma)G(2\sigma)\|_{\g(0,\tau;H,X)}^p d\tau
\\ & \leq  2^{\frac{p}{2}-1}  C^p K^p\|G\|_{L^{p}(\Omega\times\R_+;\g(H,X))}.
\end{align*}
where we only used elementary substitutions and in the last step we used the assumption applied to the function $G(2\cdot)$.
\end{proof}
The following proposition is the analogue of Theorem \ref{th:independenceweight} for the class $\RegthetaR$.
\begin{proposition}\label{prop:thetaresult}
Let $X$ be a UMD space with type $2$. Assume $\omega_0(-A)<0$ and $S$ is an analytic semigroup. Let $p\in [2, \infty)$, $\alpha\in (-1,\frac{p}{2}-1)$ and $\theta\in [0,1/2)$. Then the following are equivalent:
\begin{enumerate}[{\rm (1)}]
\item $A\in \RegthetaR$.
\item There is a constant $C>0$ such that for all $G\in L_\F^p(\O \times\R_+,w_{\alpha};\g(H,X))$ we have $S_\theta
\diamond G(t)\in D(A^{\frac12-\theta})$ $\P\times dt$-a.e. and
\begin{equation*}
\|S_\theta\diamond G\|_{L^p(\O \times\R_+,w_{\alpha};D(A^{\frac12-\theta}))}\leq C \|G\|_{L^p(\O \times\R_+,w_{\alpha};\g(H,X))}.
\end{equation*}
\end{enumerate}
\end{proposition}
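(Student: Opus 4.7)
The strategy is to reduce Proposition \ref{prop:thetaresult} to the operator-valued weight-transference result Theorem \ref{thm:indweight} applied to the kernel
\[
K(t,s) := A^{1/2-\theta} S_\theta(t-s) = \frac{(t-s)^{-\theta}}{\Gamma(1-\theta)}\, A^{1/2-\theta} S(t-s), \qquad 0<s<t.
\]
With this choice $T_K G(t) = A^{1/2-\theta}(S_\theta\diamond G)(t)$, and since $0\in\rho(A)$ (coming from $\omega_0(-A)<0$) the norms $\|\cdot\|_{D(A^{1/2-\theta})}$ and $\|A^{1/2-\theta}\,\cdot\,\|_X$ are equivalent. Under this identification, (1) and (2) are exactly the unweighted and $w_\alpha$-weighted $L^p$-boundedness of $T_K$, and the equivalence follows from Theorem \ref{thm:indweight} applied with $X_0:=D(A)$ (which densely embeds into $X$).

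The substantive task is to verify the two hypotheses on $K$ in Theorem \ref{thm:indweight}. For the operator norm bound, the analyticity of $S$ (which is part of the hypothesis) combined with exponential stability yields $\|A^{\beta} S(\tau)\|\leq C\tau^{-\beta}$ for all $\tau>0$ and $\beta>0$, so
\[
\|K(t,s)\| \leq \frac{(t-s)^{-\theta}}{\Gamma(1-\theta)}\cdot \frac{C}{(t-s)^{1/2-\theta}} = \frac{M}{(t-s)^{1/2}}.
\]
For the uniform $X_0$-bound, I split $A^{1/2-\theta} = A^{\theta}\cdot A^{1/2-2\theta}$ (the factors commuting as functional calculus of $A$). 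Since $A$ is invertible, $A^{1/2-2\theta}\colon D(A)\to X$ is bounded: if $1/2-2\theta\leq 0$ this is immediate, and if $1/2-2\theta\in(0,1)$ it follows from the moment inequality. Combining this with $\|A^{\theta} S(t-s)\|\leq C(t-s)^{-\theta}$, one obtains for every $x\in D(A)$
\[
\|K(t,s)x\| \leq \frac{(t-s)^{-\theta}}{\Gamma(1-\theta)}\cdot \frac{C}{(t-s)^{\theta}} \cdot C\|x\|_{D(A)} = M'\|x\|_{D(A)},
\]
uniformly in $t>s>0$. The critical cancellation is between the Riemann--Liouville factor $(t-s)^{-\theta}$ and the analytic smoothing $\|A^{\theta} S(t-s)\|\lesssim (t-s)^{-\theta}$; this is precisely where analyticity is needed.

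Both hypotheses of Theorem \ref{thm:indweight} being verified, the theorem gives that $L^p$-boundedness of $T_K$ is independent of $\alpha\in(-\infty,p/2-1)$, which is exactly the asserted equivalence. Well-definedness of $S_\theta\diamond G(t)$ as an $X$-valued stochastic integral for adapted step processes valued in $D(A)$ follows from Corollary \ref{cor:Ito} together with the $L^2$-in-time integrability guaranteed by $\alpha<p/2-1$, arguing as in Remark \ref{r:Stochasticintegralwelldef}; density of such step processes in $L^p_\F(\Omega\times\R_+,w_\alpha;\g(H,X))$ then transfers the estimate to arbitrary $G$. The main obstacle is conceptual rather than computational: one must recognise that the framework of Theorem \ref{thm:indweight} applies verbatim once the Riemann--Liouville singularity in $S_\theta$ is absorbed into the analytic smoothing of $S$, which is only possible because the exponents $\theta$ in the kernel and in $A^{1/2-\theta}$ conspire to produce exactly the admissible $(t-s)^{-1/2}$ decay.
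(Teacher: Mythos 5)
Your overall route is the same as the paper's: both reduce the statement to Theorem \ref{thm:indweight} applied to the kernel $K_\theta(t,s)=A^{1/2-\theta}(t-s)^{-\theta}S(t-s)$ (up to the harmless constant $\Gamma(1-\theta)$), with the operator-norm bound $\|K_\theta(t,s)\|\leq M(t-s)^{-1/2}$ coming from analyticity and exponential stability. That part of your argument, and the reduction itself, are correct.

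However, your verification of the second hypothesis of Theorem \ref{thm:indweight} contains a genuine error. From $\|A^{\theta}S(t-s)\|\leq C(t-s)^{-\theta}$ and the boundedness of $A^{1/2-2\theta}\colon D(A)\to X$ you in fact obtain
\[
\|K_\theta(t,s)x\|\;\leq\; \frac{(t-s)^{-\theta}}{\Gamma(1-\theta)}\cdot C(t-s)^{-\theta}\cdot C\|x\|_{D(A)} \;=\; \frac{C^2}{\Gamma(1-\theta)}\,(t-s)^{-2\theta}\,\|x\|_{D(A)},
\]
not $M'\|x\|_{D(A)}$: the two factors $(t-s)^{-\theta}$ compound rather than cancel. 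Indeed, no choice of $X_0$ can rescue the literal hypothesis $\|K(t,s)x\|\leq M\|x\|_{X_0}$ when $\theta>0$, since $\|K_\theta(t,s)x\|=\Gamma(1-\theta)^{-1}(t-s)^{-\theta}\|A^{1/2-\theta}S(t-s)x\|\to\infty$ as $s\uparrow t$ whenever $A^{1/2-\theta}x\neq 0$. This does not sink the approach: that hypothesis enters the proof of Theorem \ref{thm:indweight} only to guarantee that $T_KG(t)$ is well defined for $X_0$-valued adapted step processes, and for this the weaker bound $\|K_\theta(t,s)x\|\lesssim (t-s)^{-\theta}\|x\|_{D(A)}$ suffices, because $(t-s)^{-2\theta}$ is integrable near the diagonal ($2\theta<1$) and Corollary \ref{cor:Ito} applies (the paper itself glosses over this point when invoking Theorem \ref{thm:indweight} here). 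But as written your displayed inequality is false, and your closing remark identifying ``the critical cancellation'' between the Riemann--Liouville factor $(t-s)^{-\theta}$ and $\|A^{\theta}S(t-s)\|\lesssim (t-s)^{-\theta}$ rests on the same sign error; the only genuine cancellation in this proof is the one in the operator-norm bound, where $(t-s)^{-\theta}\cdot (t-s)^{-(1/2-\theta)}=(t-s)^{-1/2}$.
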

\begin{proof}
Let $K_{\theta}:\Delta \to \calL(X)$ be defined by $K_{\theta}(t,s) = A^{\frac12-\theta}(t-s)^{-\theta}S(t-s)$.
By analyticity of the semigroup $(S(t))_{t\geq 0}$, one has $\|K_{\theta}(t,s)\|\leq C/(t-s)^{1/2}$ for $t>s>0$, and thus the result follows from Theorem \ref{thm:indweight} in the same way as in Theorem \ref{th:independenceweight}.
\end{proof}

We are ready to prove the main result of this section. Recall from Remark \ref{r:SMRthetafromHinfinite} that all the conditions are satisfied if $X$ is isomorphic to a closed subspace of $L^q$ with $q\in [2, \infty)$, $0\in \rho(A)$ and $A$ has a bounded $H^\infty$-calculus of angle $<\pi/2$.
\begin{theorem}
\label{th:regularitypath}
Let $X$ be a UMD space with type $2$. Assume $\omega_0(A)<0$, $A\in \BIP(X)$ with $\theta_A<\pi/2$. Let $p\in (2, \infty)$, let $\alpha\in (-1,\frac{p}{2}-1)$ (or $p=2$ and $\alpha=0$) and let $\theta\in [0,\frac12)$. Assume that $A\in \RegthetaR$.
\begin{enumerate}[{\rm (1)}]
\item[\rm(1)] {\rm (Space-time regularity)} If $\theta\neq (1+\alpha)/p$, then
\[
\E\|S\diamond G\|_{H^{\theta,p}(\R_+,w_{\alpha};D(A^{\frac12-\theta}))}^p \leq C^p
\, \E\|G\|_{L^p(\R_+,w_{\alpha};\g(H,X))}^p.
\]
\item[\rm(2)] {\rm (Maximal estimates)} If $\alpha\geq 0$ and $\theta-(1+\alpha)/p>0$, then
\[
\E \sup_{t\in\R_+} \| S\diamond G(t)\|_{D_A\left(\frac{1}{2}-\frac{1+\alpha}{p},p\right)}^p \leq
C^p \, \E\|G\|_{L^p(\R_+,w_{\alpha};\g(H,X))}^p.
\]
\item[\rm(3)] {\rm (Parabolic regularization)} If $\alpha\geq 0$ and $\theta-1/p>0$, then for any $\delta>0$
\[
\E \sup_{t\in[\delta,\infty)} \| S\diamond G(t)\|_{D_A\left(\frac{1}{2}-\frac{1}{p},p\right)}^p \leq
C^p \, \E\|G\|_{L^p(\R_+,w_{\alpha};\g(H,X))}^p.
\]
\end{enumerate}
In all cases the constant $C$ is independent of $G$.
\end{theorem}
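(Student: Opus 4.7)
The plan is to establish (1) first and then deduce (2) and (3) as direct consequences of the trace embeddings in Corollary \ref{cor:trace}. The starting point for (1) is the hypothesis $A \in \RegthetaR$, together with Proposition \ref{prop:psi}, which yields $A \in \RegpsiR$ for every $\psi \in [0, \theta)$: indeed, $A \in \BIP(X)$ with $\theta_A < \pi/2$ combined with UMD gives $R$-sectoriality with $\omega_R(A) < \pi/2$ by Remark \ref{r:BIPimplies}, so the hypotheses of Proposition \ref{prop:psi} are met. Proposition \ref{prop:thetaresult} then extends these bounds to the weighted setting, giving
\[\|A^{1/2-\psi} S_\psi \diamond G\|_{L^p(\Omega \times \R_+, w_\alpha; X)} \lesssim \|G\|_{L^p(\Omega \times \R_+, w_\alpha; \g(H,X))},\quad \psi \in [0, \theta],\]
where the case $\psi = \theta$ is already provided by the hypothesis. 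Specializing to $\psi = 0$ shows that $u := S \diamond G$ lies in $L^p(\Omega \times \R_+, w_\alpha; D(A^{1/2}))$, which controls the $L^p$-piece of the $H^{\theta,p}$-norm with values in $D(A^{1/2-\theta})$.

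By Theorem \ref{th:SobolevLMV}(3) and the assumption $\theta \neq (1+\alpha)/p$, the $H^{\theta,p}_0$-norm on $\R_+$ is equivalent to the graph norm of $\partial_t^\theta$; hence (1) reduces to establishing
\[\|A^{1/2-\theta}\partial_t^\theta u\|_{L^p(\Omega \times \R_+, w_\alpha; X)} \lesssim \|G\|_{L^p(\Omega \times \R_+, w_\alpha; \g(H,X))}.\]
A stochastic Fubini argument represents $\partial_t^\theta u$ as a stochastic convolution $K_\theta \diamond G$ whose kernel has Laplace transform $\lambda^\theta(\lambda + A)^{-1}$. The decomposition
\[\lambda^\theta(\lambda + A)^{-1} = (\lambda+A)^{\theta - 1} + \bigl[\lambda^\theta - (\lambda+A)^\theta\bigr](\lambda+A)^{-1}\]
splits $K_\theta \diamond G$ into a principal piece equal to $S_\theta \diamond G$, controlled by the $\psi=\theta$ estimate above, and a remainder whose kernel $K^{\mathrm{rem}}_\theta$ satisfies $\|A^{1/2-\theta}K^{\mathrm{rem}}_\theta(\tau)\| \lesssim \tau^{-1/2}$ by analyticity of $S$ and the $\BIP$ hypothesis on $A$. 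Theorem \ref{thm:indweight} then reduces the weighted remainder estimate to the unweighted one, which itself follows from the joint operator-valued calculus of $\partial_t$ and $A$, whose BIP angles sum to less than $\pi$.

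Parts (2) and (3) follow mechanically from (1). For (2), apply Corollary \ref{cor:trace}(1) with $X$ replaced by $D(A^{1/2-\theta})$, the operator $A$ restricted to that space, and exponent $s = \theta$: the hypothesis $\theta > (1+\alpha)/p$ matches the corollary, and reiteration of real interpolation collapses the target space to $D_A(1/2 - (1+\alpha)/p, p)$, using $\mu + (1/2-\theta) = 1/2 - (1+\alpha)/p$ with $\mu = \theta-(1+\alpha)/p$. Taking $p$-th moments and invoking the bound from (1) yields (2). Part (3) is identical in structure, applying Corollary \ref{cor:trace}(2) on the shifted interval $I_\delta$ with $s = \theta > 1/p$, producing the interpolation index $1/2 - 1/p$.

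The main obstacle is the fractional-time derivative estimate at the heart of (1). The classical deterministic mixed-derivative theorem of Dore--Venni does not apply directly, since the stochastic forcing $G\,dW_H$ is not in $L^p(\R_+; X)$ but only in $L^p(\R_+; \g(H,X))$ after It\^o integration. The whole role of the stronger $\RegthetaR$ hypothesis, as opposed to plain $\RegR$, is to absorb the principal term of the Laplace-domain decomposition via the $S_\theta$-convolution; the remainder kernel then has just enough decay to fall into the scope of the weight-reduction Theorem \ref{thm:indweight}.
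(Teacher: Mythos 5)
Your overall architecture is right: the principal object is $S_\theta\diamond G$, Proposition \ref{prop:thetaresult} transfers its bound to the weighted setting, and parts (2) and (3) do follow from (1) together with the $\psi=0$ estimate exactly via Corollary \ref{cor:trace} and the isomorphism $A^{1/2-\theta}:D_A(1/2-(1+\alpha)/p,p)\to D_A(\theta-(1+\alpha)/p,p)$; that portion matches the paper. The gap is in the heart of (1), namely the remainder term. You split $\lambda^\theta(\lambda+A)^{-1}=(\lambda+A)^{\theta-1}+[\lambda^\theta-(\lambda+A)^\theta](\lambda+A)^{-1}$ and then claim the remainder stochastic convolution is controlled because (a) its kernel satisfies $\|A^{1/2-\theta}K^{\mathrm{rem}}_\theta(\tau)\|\lesssim\tau^{-1/2}$, so Theorem \ref{thm:indweight} reduces to the unweighted case, and (b) the unweighted case ``follows from the joint operator-valued calculus of $\partial_t$ and $A$.'' Neither step closes the argument. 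Theorem \ref{thm:indweight} is an equivalence: it transfers a boundedness assertion between weighted and unweighted $L^p$, but it does not produce one; and a pointwise kernel bound of order $\tau^{-1/2}$ is emphatically not sufficient for an unweighted stochastic $L^p$-estimate --- the kernel $A^{1/2}S(\tau)$ always has this decay for analytic semigroups, yet Proposition \ref{prop:counterex} exhibits such an $A$ with $A\notin\RegR$. Finally, Dore--Venni/Pr\"uss--Sohr is a statement about deterministic operators on $L^p(\R_+,w_\alpha;X)$; as you yourself note in your last paragraph, it does not apply to a stochastic convolution against $G\,dW_H$, so invoking ``the joint calculus'' for the remainder, which you have set up as a new stochastic convolution in $G$, is circular.

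The missing idea is to make the remainder \emph{deterministic}. Writing $[\lambda^\theta-(\lambda+A)^\theta](\lambda+A)^{-1}=[\lambda^\theta(\lambda+A)^{-\theta}-1](\lambda+A)^{\theta-1}$ shows the remainder is $(\partial_t^\theta\mathscr{C}^{-\theta}-I)$ applied to the already-controlled process $S_\theta\diamond G$, where $\mathscr{C}=\mathscr{A}+\mathscr{B}$ is the closure of $\partial_t+A$ on $L^p(\R_+,w_\alpha;X)$. This is how the paper proceeds, in fact without any remainder at all: by the Da Prato--Kwapie\'n--Zabczyk factorization one has the exact identity $\mathscr{C}^{-\theta}\bigl(A^{1/2-\theta}S_\theta\diamond G\bigr)=A^{1/2-\theta}S\diamond G$ pathwise, the Pr\"uss--Sohr sum theorem (using $\theta_A+\theta_{\partial_t}<\pi$ and Theorem \ref{th:SobolevLMV}) gives $D(\mathscr{C}^\theta)=H^{\theta,p}_0(\R_+,w_\alpha;X)\cap L^p(\R_+,w_\alpha;D(A^\theta))$ for $\theta\neq(1+\alpha)/p$, and hence $\|A^{1/2-\theta}S\diamond G\|_{H^{\theta,p}}\lesssim\|\mathscr{C}^\theta A^{1/2-\theta}S\diamond G\|_{L^p}=\|A^{1/2-\theta}S_\theta\diamond G\|_{L^p}$. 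The only stochastic input is the single application of Proposition \ref{prop:thetaresult}; all of the ``mixed derivative'' content is a deterministic operator bound applied $\omega$-by-$\omega$. Until your remainder is recast in this form (or an independent proof of its unweighted stochastic $L^p$-boundedness is supplied), part (1) is not proved.
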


\begin{proof}
To prepare the proof, we collect some useful facts.
Let $\mathscr{A}$ be the closed and densely defined
operator on $L^p(\R_+,w_{\alpha};X)$ with domain $D(\mathscr{A}) :=L^p(\R_+,w_{\alpha};D(A))$ defined by $$(\mathscr{A} f)(t) := A f(t);$$
since $A\in \BIP(X)$ then also $\mathscr{A}\in \BIP(L^p(\R_+,w_{\alpha};X))$ and $\theta_{\mathscr{A}}=\theta_{A}<\pi/2$. Moreover, $0\in \rho (\mathscr{A})$ since $0\in \rho(A)$.
Let $\mathscr{B}$ be the closed and densely defined operator on $L^p(\R_+,w_{\alpha};X)$
with domain $D(\mathscr{B}) := W^{1,p}_{0}(\R_+,w_{\alpha};X)$ given by
$$\mathscr{B} f := f'.$$
By Theorem \ref{th:SobolevLMV}, $\mathscr{B}$ has a bounded $H^{\infty}$-calculus of angle $\om_{H^\infty}(\mathscr{B})=\pi/2$; in particular $\theta_{\mathscr{B}}\leq \pi/2$.
Since $\theta_{\mathscr{A}}+\theta_{\mathscr{B}}<\pi$, by \cite[Theorems 4 and 5]{PrSo} the operator
\begin{align*}
\mathscr{C}:=\mathscr{A}+ \mathscr{B},  \quad D(\mathscr{C}) :=
D(\mathscr{A})\cap D(\mathscr{B}),
\end{align*}
is an invertible sectorial on $L^{p}(\R_+,w_{\alpha};X)$, moreover has bounded imaginary powers with $\theta_{\mathscr{C}}\leq \pi/2$.
By \cite[Proposition 3.1]{Brz2} one has
\begin{equation}\label{eq:Lambdainverse}
(\mathscr{C}^{-\gamma} f)(t)= \frac{1}{\Gamma(\gamma)}\int_0^t
(t-s)^{\gamma-1} S(t-s) f(s) \, ds.
\end{equation}
Moreover, for all $\gamma\in (0,1]$ one has
(see \cite[Lemma 9.5(b)]{PrussAnaltyicStefan})
\begin{equation}\label{eq:idLambdalambda}
\begin{aligned}
D(\mathscr{C}^{\gamma})&=[L^p(\R_+,w_{\alpha};X),D(\mathscr{B})]_{\gamma}\cap [L^{p}(\R_+,w_{\alpha};X),D(\mathscr{A})]_{\gamma}\\
&= H^{\gamma,p}_{0}(\R_+,w_{\alpha};X) \cap L^p(\R_+,w_{\alpha};D(A^{\gamma})),
\end{aligned}
\end{equation}
provided $\gamma\neq (1+\alpha)/p$, (the last equality follows from Theorem \ref{th:SobolevLMV}(2)).
To prove (1) and (2), by a density argument, it suffices to consider an
adapted rank step process $G:[0, \infty)\times\O\to \g(H,D(A))$.

(1): By the Da Prato--Kwapie\'n--Zabczyk factorization argument (see \cite{Brz2} and \cite[Section 5.3]{DPZ} and references therein), using \eqref{eq:Lambdainverse} for $\gamma=\theta$, the stochastic Fubini theorem and the
equality
\[\frac1{\Gamma(\theta)
\Gamma(1-\theta)}\int_r^t (t-s)^{\theta-1} (s-r)^{-\theta} \, ds = 1\]
one obtains, for all $t\in \R_+$,
\begin{equation}
\label{eq:identitytrace1}
\mathscr{C}^{-\theta} (A^{\frac12-\theta} S_\theta\diamond G)(t)
= A^{\frac12-\theta}S\diamond G(t) \ \ \text{almost surely}.
\end{equation}
Then,
\begin{align*}
\| A^{\frac12-\theta} S\diamond G\|_{L^p(\O;H^{\theta,p}(\R_+,w_{\alpha};X))}
& \stackrel{(i)}{\leq}C
\|\mathscr{C}^{\theta} A^{\frac12-\theta}S\diamond G\|_{L^p(\O \times\R_+,w_{\alpha};X)}
\\ & \stackrel{(ii)}{=} C
\|A^{\frac12-\theta}S_\theta\diamond G\|_{L^p(\O \times\R_+,w_{\alpha};X)} \\ &
\stackrel{(iii)}{\leq}  C'\|G\|_{L^p(\O \times\R_+,w_{\alpha};\g(H,X))},
\end{align*}
where in $(i)$ we have used \eqref{eq:idLambdalambda} (recall that by assumption $\theta\neq(1+\alpha)/p$), in $(ii)$ \eqref{eq:identitytrace1} and in $(iii)$ we used Proposition \ref{prop:thetaresult}.

(2): By Corollary \ref{cor:trace}(1), we have
\begin{equation*}
H^{\theta,p}(\R_+,w_{\alpha};X) \cap L^p(\R_+,w_{\alpha};D(A^{\theta}))
\hookrightarrow  C_0\left([0,\infty);D_A\left(\theta-\frac{1+\alpha}{p},p\right)\right).
\end{equation*}
Moreover, since $A\in \BIP(X)$ with $\theta_B<\pi/2$ then $\omega_{R}(A)<\pi/2$ thus $-A$ generates an analytic semigroup on $X$ (see Remark \ref{r:BIPimplies}). Setting $\zeta_{\lambda} = A^{\lambda}S\diamond G$, by Proposition \ref{prop:thetaresult} and the fact that $0\in\varrho(A)$, one has
\begin{equation}
\label{eq:thetatraceproof}
\begin{aligned}
\  \|\zeta_{\frac12-\theta}&\|_{L^p\left(\O;C_0\left([0,\infty);D_A\left(\theta-\frac{1+\alpha}{p},p\right)\right)\right)}
\\ & \leq K \|\zeta_{\frac12-\theta}\|_{L^p(\O;H^{\theta,p}([0,\infty),w_{\alpha};X))}+
K\|\zeta_{\frac12-\theta}\|_{L^p(\O;L^p(\R_+,w_{\alpha};D(A^{\theta})))}\\
& = K \|\zeta_{\frac12-\theta}\|_{L^p(\O;H^{\theta,p}(\R_+,w_{\alpha};X))}+K
\|\zeta_{\frac12}\|_{L^p(\O\times\R_+,w_{\alpha};X))}
\\ & \leq C K\|G\|_{L^p(\O \times\R_+,w_{\alpha};\g(H,X))}.
\end{aligned}
\end{equation}
Since $A^{\frac{1}{2}-\theta}:D_A(1/2-(1+\alpha)/p,p)\rightarrow D_A(\theta-(1+\alpha)/p,p)$ is an isomorphism (see \cite[Theorem 1.15.2 (e)]{Tr1}), we have
\begin{align*}
\|S\diamond  G\|_{L^p\left(\O;C_0\left([0,\infty);D_A\left(\frac{1}{2}-\frac{1+\alpha}{p},p\right)\right)\right)} &\eqsim_{A,\theta,p}
\|\zeta_{\frac12-\theta}\|_{L^p\left(\O;C_0\left([0,\infty);D_A\left(\theta-\frac{1+\alpha}{p},p\right)\right)\right)} \\ & \leq CK
\|G\|_{L^p(\O \times\R_+,w_{\alpha};\g(H,X))};
\end{align*}
where in the last inequality we have used \eqref{eq:thetatraceproof}.

(3): This follows from the same argument as in (2) using Corollary \ref{cor:trace}(2) instead of Corollary \ref{cor:trace}(1).
\end{proof}

\begin{remark}
\label{r:localization}
Similar to \cite[Remark 5.1]{MaximalLpregularity} (see also the references therein), Theorem \ref{th:regularitypath} can be localized via a standard stopping time argument. For future references, we give the explicit formulation for Theorem \ref{th:regularitypath}(3).\\
Let $\theta>1/p$, $0\leq \alpha<p/2-1$, $A\in \RegthetaR$ and let $\tau>0$ be an $\F$-stopping time then for any $G\in L^0_{\F}(\Omega;L^p((\tau,\infty),w_{\alpha};\g(H,X)))$,
$$
S\diamond G\in L^0\left(\Omega;C_0\left((\tau;\infty);D_A\left(\frac{1}{2}-\frac{1}{p},p\right)\right)\right).
$$
\end{remark}

\bibliographystyle{plain}
\bibliography{literature}

\end{document}